\let\origsection=\section \def\section{\@ifstar{\origsection*}{\mysection}} 
\def\mysection{\@startsection{section}{1}\z@{.7\linespacing\@plus\linespacing}{.5\linespacing}{\normalfont\scshape\centering\S}}
\renewcommand{\PrintDOI}[1]{\doi{#1}}
\def\rmlabel{\upshape({\itshape \roman*\,})}
\def\RMlabel{\upshape(\Roman*)}
\let\polishlcross=\l
\def\l{\ifmmode\ell\else\polishlcross\fi}
\let\setminus=\smallsetminus
\def\moverlay{\mathpalette\mov@rlay}
\def\mov@rlay#1#2{\leavevmode\vtop{   \baselineskip\z@skip \lineskiplimit-\maxdimen
   \ialign{\hfil$\m@th#1##$\hfil\cr#2\crcr}}}
\newcommand{\charfusion}[3][\mathord]{
    #1{\ifx#1\mathop\vphantom{#2}\fi
        \mathpalette\mov@rlay{#2\cr#3}
      }
    \ifx#1\mathop\expandafter\displaylimits\fi}
\let\eps=\varepsilon
\let\epsilon=\varepsilon
\let\theta=\vartheta
\let\rho=\varrho
\let\phi=\varphi
\def\({\left(}
\def\){\right)} 
\def\<{\left\langle} 
\def\>{\right\rangle} 
\def\llfloor{\left\lfloor}
\def\rrfloor{\right\rfloor}
\def\llceil{\left\lceil}
\def\rrceil{\right\rceil}
\let\epsilon=\varepsilon
\let\rho=\varrho
\def\colond{\colon}
\let\eps=\epsilon
\def\e{{\rm e}}
\def\I{{\rm i\,}}
\def\chia{\chi_{\textsc{arg}}}
\def\Re{\mathop{\textrm{\rm Re}}\nolimits}
\def\Im{\mathop{\textrm{\rm Im}}\nolimits}
\def\im{\mathop{\textrm{\rm im}}\nolimits}
\def\DISC{\mathop{\textrm{\rm DISC}}\nolimits}
\def\EIG{\mathop{\textrm{\rm EIG}}\nolimits}
\def\CIRCUIT{\mathop{\textrm{\rm CIRCUIT}}\nolimits}
\def\ZINTDISC{\ZZ\mathop{\textrm{\rm -INT-DISC}}\nolimits}
\def\SINTDISC{{S^1}\!\mathop{\textrm{\rm -ARC-DISC}}\nolimits}
\def\bfi{{\mathbf i}}
\def\bfv{{\mathbf v}}
\def\bfA{{\mathbf A}}
\def\bfone{{\mathbf1}}
\def\EE{{\mathds E}}
\def\ZZ{{\mathds Z}}
\def\PP{{\mathds P}}
\def\RR{{\mathds R}}
\def\CC{{\mathds C}}
\def\tG{{\widetilde G}} 
\def\cC{\mathcal C}
\newtheoremstyle{note}{4pt}{4pt}{\sl}{}{\bfseries}{.}{.5em}{}
\newtheoremstyle{introthms}{3pt}{3pt}{\itshape}{}{\bfseries}{.}{.5em}{\thmnote{#3}}
\newtheoremstyle{cases}{2pt}{2pt}{\rm}{}{\bfseries}{.}{.3em}{}
\theoremstyle{plain}
\newtheorem{theorem}              {Theorem}       [section]
\newtheorem{claim}      [theorem] {Claim}         
\newtheorem{lemma}      [theorem] {Lemma}         
\newtheorem{corollary}  [theorem] {Corollary}     
\newtheorem{fact}       [theorem] {Fact}          
\newtheorem{setup}      [theorem] {Setup} 
\theoremstyle{cases}
\newtheorem{case}                 {Case}  
\theoremstyle{note} 
\newtheorem{remark}     [theorem] {Remark}
\newtheorem{definition} [theorem] {Definition}       
\begin{document}

\title[Discrepancy and Eigenvalues]{Discrepancy and Eigenvalues of Cayley Graphs}

\dedicatory{Dedicated to the memory of Professor Miroslav Fiedler}

\author{Yoshiharu Kohayakawa}
\address{Instituto de Matem\'atica e Estat\'{\i}stica, Universidade de
  S\~ao Paulo, S\~ao Paulo, Brazil}  
\email{yoshi@ime.usp.br}

\author[Vojt\v{e}ch R\"{o}dl]{Vojt\v{e}ch R\"{o}dl}
\address{Department of Mathematics and Computer Science, 
Emory University, Atlanta, USA}
\email{rodl@mathcs.emory.edu}

\author{Mathias Schacht}
\address{Fachbereich Mathematik, Universit\"at Hamburg,
  Hamburg, Germany}
\email{schacht@math.uni-hamburg.de}

\thanks{The first author was supported by FAPESP (2013/03447-6,
  2013/07699-0), CNPq (459335/2014-6, 310974/2013-5) and Project
  MaCLinC/USP.  The second author was supported by NSF grant DMS 1301698.
  The third author was supported through the \emph{Heisenberg-Programme} of the DFG.  
  The collaboration of the first and third authors is supported
  by CAPES/DAAD PROBRAL project~430/15.}

\subjclass[2010]{Primary: 05C50. Secondary: 05C80}
\keywords{Eigenvalues, discrepancy, quasirandomness, Cayley graphs}
\date{\today}

\begin{abstract}
  We consider quasirandom properties for Cayley graphs of finite abelian
  groups.  We show that having uniform edge-distribution (i.e., small
  discrepancy) and having large eigenvalue gap are equivalent properties for
  such Cayley graphs, even if they are \emph{sparse}.  This positively answers
  a question of Chung and Graham [``Sparse quasi-random graphs'',
  Combinatorica \textbf{22} (2002), no.~2, 217--244] for the particular case
  of Cayley graphs of abelian groups, while in general the answer is negative.
\end{abstract}

\maketitle
\thispagestyle{empty}

\section{Introduction}
\label{sec:intro}
Professor Miroslav Fiedler discovered a very fruitful relationship
between connectivity properties of graphs and their spectra.  Among
other things, his works~\cites{fiedler73:_algeb, fiedler75} from the
1970s, together with other pioneering work~\cites{hall70:_r_dimen,
  donath72:_algor, donath73:_lower}, gave birth to what is now known
as \textit{spectral partitioning of graphs}.  Fiedler considered the
so called combinatorial Laplacian~$L(G)$ of graphs~$G$ and their
spectrum $0=\lambda_1\leq\lambda_2\leq\dots\leq\lambda_n$ ($n=|V(G)|$).
Generalizing the fact that~$G$ is connected if and only
if~$\lambda_2>0$, Fiedler named~$\lambda_2$ the \textit{algebraic
  connectivity} of~$G$ and went on to prove that~$\lambda_2$ is a
lower bound for the standard connectivity of~$G$ (unless~$G$ is the
complete graph).  Furthermore, he also considered partitioning the
vertex set of~$G$ by considering the coordinates of the eigenvector
belonging to~$\lambda_2$.  The algebraic connectivity of a graph is
now sometimes referred to as the \textit{Fiedler value} and the
associated eigenvector is referred to as the \textit{Fiedler vector}.
Alon~\cite{alon86:_eigen} and Sinclair and
Jerrum~\cite{sinclair89:_approx_markov} later proved that graphs with
small Fiedler value can be partitioned according to the Fiedler vector
in a direct way to produce a cut that is small in relative terms (that
is, in terms of the ratio of the number of cut edges to the number of
separated vertices).

While a small Fiedler value tells us that the graph in question may be
split along a ``small cut'', a large Fiedler value implies that the
graph is an \textit{expander}, that is, it has no cuts that are
``small''~\cites{alon85, tanner84:_explic_n}.  In this paper, we
investigate the relation between such ``edge-distribution properties''
and spectra, but focusing on the case of ``uniform
edge-distribution'', by which we mean the \textit{quasirandom} case,
in the sense of Chung, Graham and
Wilson~\cite{chung89:_quasi}.\footnote{Owing to this focus, spectral
  graph partitioning will not be discussed here; the interested reader
  is referred to, e.g., Spielman~\cite{spielman12:_spect} and Spielman
  and Teng~\cite{spielman07:_spect}.}  Since we shall be concerned
with Cayley graphs, which are regular graphs, for simplicity, we shall
work with adjacency matrices and \textit{not} with combinatorial
Laplacians.

Let an $n$-vertex graph~$G$ be given.  The \textit{eigenvalues} of~$G$
are simply the eigenvalues of the~$n$~by~$n$, $0$--$1$~adjacency
matrix of~$G$, with~$1$ indicating edges.
Let~$\lambda_k=\lambda_k(G)$ be the $k$th largest eigenvalue of~$G$,
in absolute value.  Recall that~$G$ is said to be ``quasirandom'' if
the edges of~$G$ are ``uniformly distributed'' (we postpone the
precise definition; see Definition~\ref{def:DISC_delta}).  A
fundamental result relating the~$\lambda_i$ to quasirandomness states
that \textsl{there is a large gap between~$\lambda_1$ and~$\lambda_k$
  $(k\geq2)$ \textit{if and only if} $G$~is quasirandom}.

The assertion above may be turned precise in different ways.  We are
interested in the form given by Chung, Graham, and
Wilson~\cite{chung89:_quasi}.  Recall that~\cite{chung89:_quasi} presents a
``theory of quasirandomness'' for graphs, exhibiting several, quite disparate
almost sure properties of graphs that are, quite surprisingly, equivalent in a
deterministic sense.  Earlier work in this direction is due to
Thomason~\cite{thomason87} (see also~\cite{thomason87:_random}), 
and also Alon~\cite{alon86:_eigen}, Alon and
Chung~\cite{alon88:_explic}, Frankl, R\"odl and Wilson~\cite{frankl88:_hadam},
and R\"odl~\cite{rodl86}.  One of the so-called ``quasirandom properties''
that is presented in~\cite{chung89:_quasi} is the ``eigenvalue gap'' 
between~$\lambda_1$ and~$\lambda_k$ ($k\geq2$).

Chung and Graham~\cite{ChGr02} set out to investigate the
extension of the results in~\cite{chung89:_quasi} to \textit{sparse graphs},
that is, graphs with vanishing edge-density.  As it turns out, a na\"\i ve
approach to such a project is doomed to fail, as the results
in~\cite{chung89:_quasi} \textit{do not} generalize to the ``sparse case'' in
the expected manner (for a thorough discussion on this point, the interested
reader is referred to~\cite{ChGr02} and also to~\cites{ACHKRS10,BL06,CFZ14,KohaRo01_rpI,KoRoSi03,KrSu03}).
In particular, having succeeded in proving that eigenvalue gap does imply
uniform distribution of edges in the sparse case, Chung and Graham asked whether
the converse also holds (see~\cite{ChGr02}*{p.~230}).  An affirmative answer to
this question would fully generalize the relationship between these two
concepts to the sparse case.

However, Krivelevich and Sudakov~\cite{KrSu03} showed that
the answer to the question posed by Chung and Graham is negative, by
constructing a suitable family of counterexamples. 
Here, our aim
is to show that \textit{the answer is positive if one considers Cayley graphs
  of finite abelian groups, regardless of the density of the graph.}
It is worth noting
that several explicit constructions of quasirandom graphs are indeed
Cayley graphs (see, e.g.,~\cite{thomason87:_random} and~\cite{KrSu03}*{Section~3}).

We use the following notation.  If~$G=(V,E)$ is a graph, we write~$e(G)$ for
the number of edges~$|E|$ in~$G$.  If~$U\subset V$ is a set of vertices
of~$G$, then~$G[U]$ denotes the subgraph of~$G$ induced by~$U$.  Furthermore,
if~$W\subset V$ is disjoint from~$U$, then we write~$G[U,W]$ for the bipartite
subgraph of~$G$ naturally induced by the pair~$(U,W)$.  We also sometimes
write~$E(U,W)=E_G(U,W)$ for the edge set of~$G[U,W]$.

If~$\delta>0$, we write~$x\sim_{\delta} y$ to mean that
\begin{equation*}
  \label{eq:def_sim_delta}
  (1-\delta)y\leq x\leq(1+\delta)y.
\end{equation*}
Moreover, sometimes it will be convenient to write~$O_1(\delta)$ 
for any term~$\beta$ that satisfies~$|\beta|\leq \delta$.
Observe that, clearly $x\sim_{\delta} y$ is equivalent to~$x=(1+O_1(\delta))y$.

\begin{definition}[$\DISC(\delta)$]
  \label{def:DISC_delta}
  Let~$0<\delta\leq1$ be given.  We say that an $n$-vertex graph~$G$
  $(n\geq2)$ satisfies property~$\DISC(\delta)$ if the following assertion
  holds: for all~$U\subset V(G)$ with~$|U|\geq\delta n$, we have
  \begin{equation*}
    \label{eq:def_DISC}
    e_G(U)=e(G[U])\sim_\delta e(G)\binom{|U|}{2}\left/\binom{n}{2}\right.\,.
  \end{equation*}
\end{definition}
The following concept of $\DISC_2$ is very much related to $\DISC$, as we shall see
next.

\begin{definition}[$\DISC_2(\delta')$]
  \label{def:DISC_2_delta}
  Let~$0<\delta'\leq1$ be given.  We say that an $n$-vertex graph~$G$ $(n\geq2)$
  satisfies property~$\DISC_2(\delta')$ if the following assertion
  holds: for all disjoint~$U$ and~$W\subset V(G)$ with~$|U|$,
  $|W|\geq\delta'n$, we have
  \begin{equation*}
    \label{eq:def_DISC_2}
    e_G(U,W)=e(G[U,W])\sim_{\delta'}e(G)|U||W|\left/\binom{n}{2}\right..
  \end{equation*}
\end{definition}

The following fact is very easy to prove and we omit its proof.

\begin{fact}
  \label{lem:DISC_implies_DISC_2}
  For any~$0<\delta'\leq1$, there is~$0<\delta=\delta(\delta')\leq1$ such that
  any graph that satisfies~$\DISC(\delta)$ must also
  satisfy~$\DISC_2(\delta')$.
\end{fact}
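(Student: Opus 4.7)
The plan is to use the standard identity relating edges inside a union to edges inside parts and edges across, namely
\begin{equation*}
  e_G(U\cup W) \;=\; e_G(U) \,+\, e_G(W) \,+\, e_G(U,W),
\end{equation*}
valid for disjoint $U,W\subset V(G)$. If $\DISC(\delta)$ controls each of the three terms on the left and right individually, we can solve for $e_G(U,W)$. So given $\delta'$, I would set $\delta := c(\delta')^3$ for a small absolute constant $c>0$ (to be fixed below), and assume $G$ satisfies $\DISC(\delta)$. For disjoint $U,W\subset V(G)$ with $|U|,|W|\geq \delta'n\geq \delta n$, also $|U\cup W|\geq\delta n$, so $\DISC(\delta)$ applies to each of $U$, $W$ and $U\cup W$.

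Write $p:=e(G)/\binom{n}{2}$, $u:=|U|$ and $w:=|W|$. The three applications of $\DISC(\delta)$ give
\begin{equation*}
  e_G(U)=p\tbinom{u}{2}+O_1(\delta)\,p\tbinom{u}{2},\quad
  e_G(W)=p\tbinom{w}{2}+O_1(\delta)\,p\tbinom{w}{2},\quad
  e_G(U\cup W)=p\tbinom{u+w}{2}+O_1(\delta)\,p\tbinom{u+w}{2}.
\end{equation*}
Subtracting, and using the elementary identity $\binom{u+w}{2}-\binom{u}{2}-\binom{w}{2}=uw$, one obtains
\begin{equation*}
  e_G(U,W) \;=\; p\,uw \;+\; O_1(\delta)\,p\,\Bigl(\tbinom{u+w}{2}+\tbinom{u}{2}+\tbinom{w}{2}\Bigr).
\end{equation*}
The error factor satisfies $\binom{u+w}{2}+\binom{u}{2}+\binom{w}{2}\leq (u+w)^2\leq n^2$, while the main term is $p\,uw\geq p(\delta')^2 n^2$. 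Hence the relative error is at most $\delta/(\delta')^2$, and choosing (say) $\delta\leq(\delta')^3$ yields $e_G(U,W)=(1+O_1(\delta'))p\,uw$, which is exactly $\DISC_2(\delta')$.

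There is no real obstacle here; the only point worth attending to is the bookkeeping that keeps the relative error from the two ``inside'' terms under control when $u$ and $w$ are of wildly different sizes (e.g., $w=\delta'n$ while $u=n/2$). This is why one loses a factor of $(\delta')^2$ in passing from $\delta$ to $\delta'$: the main term $uw$ can be as small as $(\delta')^2 n^2$, whereas the error terms are of order $n^2$. A slightly more careful bookkeeping would give $\delta=\Theta((\delta')^3)$ (in fact $\delta'\cdot (\delta')^2/3$ is enough), but since the statement asks only for the qualitative existence of such a $\delta$, any cubic dependence suffices.
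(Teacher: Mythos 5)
Your proof is correct. The paper omits this proof entirely (stating only that the fact ``is very easy to prove''), so there is no proof in the paper to compare against; but the inclusion--exclusion argument you give---applying $\DISC(\delta)$ to $U$, $W$, and $U\cup W$, using $\binom{u+w}{2}-\binom{u}{2}-\binom{w}{2}=uw$, and bounding the relative error by $\delta/(\delta')^2$---is the standard and presumably intended one, and the bookkeeping (taking $\delta=(\delta')^3$, say) is sound.
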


Given a graph~$G$, let~$\bfA=(a_{uv})_{u,v\in V(G)}$ be
the $0$--$1$~adjacency matrix of~$G$, with~$1$ denoting edges.  The
\textit{eigenvalues} of~$G$ are simply the eigenvalues of~$\bfA$.
Since~$\bfA$ is symmetric, its eigenvalues are real.  As usual, we adjust the
notation so that these eigenvalues are such that
\begin{equation}
  \label{eq:def_EIG_the_eigenvalues}
  \lambda_1\geq|\lambda_2|\geq\dots\geq|\lambda_n|
\end{equation}
(the fact that~$\lambda_1\geq0$ follows, for instance, from the fact
that the sum of the~$\lambda_i$ is equal to the trace of~$\bfA$, which
is~$0$).
 
\begin{definition}[$\EIG(\eps)$]
  \label{def:EIG_epsilon}
  Let~$0<\eps\leq1$ be given.  We say that an $n$-vertex graph~$G$ satisfies
  property~$\EIG(\eps)$ if the following holds.  Let~$\bar d=\bar
  d(G)=2e(G)/n$ be the average degree of~$G$, and
  let~$\lambda_1,\dots,\lambda_n$ be the eigenvalues of~$G$, with the notation
  adjusted in such a way that~\eqref{eq:def_EIG_the_eigenvalues} holds.
  Then
  \begin{enumerate}[label=\rmlabel]
  \item\label{it:eigi} $\lambda_1\sim_\eps \bar d$, 
  \item\label{it:eigii} $|\lambda_i|\leq\eps \bar d$ for all~$1<i\leq
    n$. 
  \end{enumerate}
\end{definition}

Finally, we define Cayley graphs.

\begin{definition}[Cayley graph~$G(\Gamma,A)$]
  \label{def:Cayley}
  Let~$\Gamma$ be an abelian group and let~$A\subset\Gamma\setminus\{0\}$
  be symmetric, that is, $A=-A$.  The \textit{Cayley graph}~$G=G(\Gamma,A)$ is
  defined to be the graph on~$\Gamma$, with two vertices~$\gamma$
  and~$\gamma'\in\Gamma$ adjacent in~$G$ if and only if~$\gamma'-\gamma\in A$.
\end{definition} 

We only consider finite graphs and finite abelian groups.
The main aim is to answer a question of Chung and Graham
from~\cite{ChGr02} in the positive for an interesting class of graphs.

\begin{theorem}
  \label{thm:main_positive} 
  For every~$\eps>0$, there exist~$\delta>0$ and~$n_0$ such that the following holds.
  Let~$G=G(\Gamma,A)$ be a Cayley graph for some 
  abelian group~$\Gamma$ with $n=|\Gamma|\geq n_0$ elements 
  and a symmetric
  set~$A=-A\subseteq\Gamma\setminus\{0\}$. 
  If~$G$ satisfies property~$\DISC(\delta)$,
  then~$G$ satisfies~$\EIG(\eps)$.
\end{theorem}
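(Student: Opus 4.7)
The whole argument rests on the character-theoretic description of the spectrum. For each character $\chi\in\hat\Gamma$ and each $\gamma\in\Gamma$,
\begin{equation*}
  (\bfA\chi)(\gamma)=\sum_{a\in A}\chi(\gamma+a)=\chi(\gamma)\sum_{a\in A}\chi(a),
\end{equation*}
so $\chi$ is an eigenvector of $\bfA$ with eigenvalue $\lambda_\chi=\sum_{a\in A}\chi(a)$. Because $A=-A$ each $\lambda_\chi$ is real, and $|\lambda_\chi|\leq|A|=\bar d$, with equality for the trivial character; hence $\lambda_1=\bar d$ and part~\ref{it:eigi} of $\EIG(\eps)$ holds automatically. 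The content of Theorem~\ref{thm:main_positive} thus reduces to the statement that $|\lambda_{\chi_0}|\leq\eps\bar d$ for every non-trivial $\chi_0\in\hat\Gamma$, which I would prove by contrapositive.

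Suppose some non-trivial $\chi_0$ satisfies $|\lambda_{\chi_0}|>\eps\bar d$ and let $d$ be the order of $\chi_0$, so that $\chi_0\colon\Gamma\to\mu_d$ is surjective with each fibre of size $n/d$. Fix an integer $k=k(\eps)$ (to be chosen) and partition the unit circle into $k$ equal arcs; let $U_1,\dots,U_k$ be the pre-images in $\Gamma$ of the corresponding subsets of $\mu_d$, so every non-empty class satisfies $|U_j|\geq n/k$. If $\xi_j$ denotes the midpoint of the $j$-th arc, then $|\chi_0(u)-\xi_j|\leq 2\pi/k$ for every $u\in U_j$, and the step function $g=\sum_j\xi_j\mathbf{1}_{U_j}$ approximates $\chi_0$ with $\|g-\chi_0\|_\infty=O(1/k)$.

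The contradiction comes from computing $\langle\chi_0,\bfA\chi_0\rangle$ in two ways. On the one hand, since $\chi_0$ is an eigenfunction with $\|\chi_0\|^2=n$, $\langle\chi_0,\bfA\chi_0\rangle=n\lambda_{\chi_0}$ and hence $|\langle\chi_0,\bfA\chi_0\rangle|>n\eps\bar d$. On the other hand, pick $\delta'\leq 1/k$ and use Fact~\ref{lem:DISC_implies_DISC_2} to obtain $\delta=\delta(\delta')$ for which $\DISC(\delta)$ implies $\DISC_2(\delta')$; since $|U_j|\geq n/k\geq\delta' n$, both $\DISC$ (on diagonal blocks) and $\DISC_2$ (on off-diagonal ones) give $\langle\mathbf{1}_{U_j},\bfA\mathbf{1}_{U_\ell}\rangle=(1+O(\delta'))\,\bar d\,|U_j|\,|U_\ell|/n$, so
\begin{equation*}
  \langle g,\bfA g\rangle=\sum_{j,\ell}\xi_j\overline{\xi_\ell}\,\langle\mathbf{1}_{U_j},\bfA\mathbf{1}_{U_\ell}\rangle=\frac{\bar d}{n}\,\Bigl|\sum_j\xi_j|U_j|\Bigr|^2+O(\delta'\bar d n).
\end{equation*}
Non-triviality of $\chi_0$ forces $\sum_{u\in\Gamma}\chi_0(u)=0$, so $\bigl|\sum_j\xi_j|U_j|\bigr|=\bigl|\sum_u g(u)\bigr|\leq n\|g-\chi_0\|_\infty=O(n/k)$, and the ``main'' term is itself only $O(\bar d n/k^2)$. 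Replacing $g$ by $\chi_0$ in the inner product changes it by at most $O(\bar d n/k)$, using $\|\bfA f\|_\infty\leq\bar d\|f\|_\infty$. Combining, $n|\lambda_{\chi_0}|\leq C\bar d n(1/k+\delta')$ for some absolute $C$; choosing $k$ large and $\delta'\leq 1/k$ small in terms of $\eps$ contradicts $|\lambda_{\chi_0}|>\eps\bar d$.

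The main obstacle is the joint parameter constraint: the arc width $2\pi/k$ must be small to control the step-function error, while the classes $U_j$ of size $\geq n/k$ must still satisfy the $\DISC_2$ size hypothesis, which forces $\delta'\leq 1/k$. Both $k$ and $\delta'$ must be tuned to $\eps$, and only then does Fact~\ref{lem:DISC_implies_DISC_2} fix $\delta$ in terms of $\delta'$ and hence of $\eps$. A secondary technicality is the case of very small order $d$, where some arcs contain no point of $\mu_d$ (so the corresponding $U_j$ are empty) and the approximation $g$ of $\chi_0$ is actually exact; this only simplifies the estimate.
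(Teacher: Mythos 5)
Your proof is correct, and it takes a route genuinely different from both of the paper's arguments. The paper's main proof (due to Gowers) works by contradiction and is probabilistic: from a large eigenvalue $\lambda^{(\chi)}$ it defines inclusion probabilities $p(\gamma)=(1+\Re\chi(\gamma))/2$, samples random sets $-X$ and $Y$, computes $\EE\langle A,(-X)*Y\rangle$ via a convolution identity, and exhibits explicit sets on which $e(X,Y)$ deviates from the $\DISC$-prediction. The paper's appendix proof is a long chain of implications $\DISC\Rightarrow\ZINTDISC\Rightarrow\SINTDISC\Rightarrow\EIG$, first transferring $\DISC$ for $G$ into an interval-discrepancy property of the image $\rho(A)\subset\ZZ/m\ZZ$ via a weighted auxiliary graph, and only then summing character values over arcs. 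Your argument bypasses both: you directly estimate the quadratic form $\langle\chi_0,\bfA\chi_0\rangle=n\lambda_{\chi_0}$ by replacing $\chi_0$ with the step function $g=\sum_j\xi_j\mathbf{1}_{U_j}$ supported on pullbacks of arcs, expand $\langle g,\bfA g\rangle$ into block sums controlled by $\DISC$ and $\DISC_2$, and exploit the cancellation $\sum_\gamma\chi_0(\gamma)=0$ to kill the main term $\tfrac{\bar d}{n}\bigl|\sum_j\xi_j|U_j|\bigr|^2$. This is arguably the most economical of the three proofs, since it never needs to unfold the Cayley-graph discrepancy hypothesis into a statement about the distribution of the connection set $A$ itself; the regularity bound $\|\bfA f\|_\infty\leq\bar d\|f\|_\infty$ is all that is required to pass from $g$ back to $\chi_0$. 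Two small slips worth noting, neither affecting the argument: the guaranteed lower bound on nonempty classes is $|U_j|\geq n/(2k)$ rather than $n/k$ (when the order $d$ slightly exceeds a multiple of $k$, an arc may contain only $\lfloor d/k\rfloor$ points of $\mu_d$), so one should take $\delta'\leq1/(2k)$; and in the small-$d$ case the step function $g$ is not literally equal to $\chi_0$ unless the arcs are centered on $\mu_d$, but the uniform bound $\|g-\chi_0\|_\infty=O(1/k)$ still holds, which is all you use.
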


The proof of this theorem is given in Section~\ref{sec:pf_main}.  
We close this introduction with a few remarks concerning Theorem~\ref{thm:main_positive}.

We first observe that Theorem~\ref{thm:main_positive}, together with
the results of Chung and Graham~\cite{ChGr02}, imply that properties
$\DISC$ and $\EIG$ are equivalent for Cayley graphs.  More precisely,
by \textit{$\DISC$ implies $\EIG$ for Cayley graphs} we mean the
following: for every $\eps>0$ there is a $\delta=\delta(\eps)>0$ such
that, for any sequence of positive integers~$(n_k)_k$
with~$n_k\to\infty$ as~$k\to\infty$, and any sequence~$(G_k)_k$ 
of 
Cayley graphs with~$|V(G_k)|=n_k$,
we have that {\sl if all but finitely many graphs~$G_k$ satisfy
  $\DISC(\delta)$, then all but finitely many~$G_k$ satisfy
  $\EIG(\eps)$.}  Theorem~\ref{thm:main_positive} tells us that
$\DISC$ implies $\EIG$ for sequences of Cayley graphs.
In~\cite{ChGr02}*{Theorem~1} it is proved that $\EIG$ implies $\DISC$
in the same sense for sequences of arbitrary graphs with average
degree tending to infinity.  This establishes the equivalence of the
properties $\DISC$ and $\EIG$ for Cayley graphs with diverging average
degree. 

Secondly, we note that in general it is not true that 
$\DISC$ implies $\EIG$ for arbitrary sequences of graphs.
This was already pointed out by Krivelevich and Sudakov
in~\cite{KrSu03}.  For every $\eps>0$ and every $\delta>0$, they constructed
an infinite sequence of graphs that satisfy $\DISC(\delta)$ but fail to
satisfy \textrm{(\textit{i})} in the definition of~$\EIG(\eps)$ (see
Definition~\ref{def:EIG_epsilon}).

The following example is a different probabilistic construction: For
$p=p(n)\to0$ with~$pn\gg1$ as $n\to\infty$, consider the graph $G$
given by the union of the random graph $G(n,p)$ and a disjoint clique
of size $\alpha pn$ for some constant~$\alpha >0$. Such a graph $G$
has density $(1+o(1))p$ and for every fixed $\delta>0$ with high
probability it satisfies $\DISC(\delta)$.  However, $\alpha pn-1$ is
one of the eigenvalues of its adjacency matrix and, hence, $G$ fails
to satisfy~\ref{it:eigii} in the definition of~$\EIG(\eps)$
for any fixed $\eps\in(0,\alpha)$.

We also remark that in~\cite{ChGr02}, it is proved that, under some
additional conditions, $\DISC$ implies $\EIG$ for sequences of sparse graphs.
This additional assumption combined with $\DISC$ implies that almost every graph 
in the sequence contains the ``expected number'' of closed walks of length $\l$
for some even $\l\geq 4$. More precisely, for a sequence of graphs~$G_n$
with average degree~$\bar d_n$ we say it satisfies $\CIRCUIT_\l$ if 
the number of closed walks of length~$\l$ in~$G_n$ 
is $(1+o(1))(\bar d_n)^\l$. We remark that 
Theorem~\ref{thm:main_positive} is not a consequence of the result of Chung and Graham, since 
there exist sequences of Cayley graphs satisfying $\DISC$, and hence by Theorem~\ref{thm:main_positive} 
also $\EIG$, but fail to have $\CIRCUIT_\l$ for any fixed even
$\l\geq4$.
We next sketch the construction of such a sequence.

Let
\[
	p=p(n)=\frac{\log^2n}{n}
\] 
and consider the random cyclic Cayley graph $\cC_{n,p}=G(\ZZ/n\ZZ,A)$, where 
independently 
for every $a\in (\ZZ/n\ZZ)\setminus\{0\}$ both elements $a$ and $-a$ are included in $A$
with probability $p/2$. It follows from standard Chernoff-type estimates 
that asymptotically almost surely $\cC_{n,p}$ satisfies $\DISC$ and has average degree $\bar d_n=(1+o(1)pn$. 
Consequently, by Theorem~\ref{thm:main_positive} it also satisfies~$\EIG$. 

On the other hand, owing to the choice of $p$ we have 
\[
	pn^2\gg(pn)^\l
\]
for every fixed even $\l\geq 4$ and sufficiently large $n$. Hence, for every 
even $\l\geq 4$ in expectation 
the number of ``degenerated walks'' which only use one edge 
is $\gg(\bar d_n)^\l$. This implies that with positive probability~$\cC_{n,p}$ 
satisfies $\DISC$ and $\EIG$, but fails to satisfy $\CIRCUIT_\l$
for every even $\l\geq 4$. Using appropriate blowups of such 
graphs yields sequences of Cayley graphs with these properties for any density $p$
with \mbox{$\log^2 n/n \ll p\ll1$}.

Finally, we remark that very recently 
Conlon and Zhao~\cite{CZ} extended 
Theorem~\ref{thm:main_positive} for Cayley graphs for arbitrary 
(not necessarily abelian) finite groups.

\subsection*{Acknowledgements}
\label{sec:acknowdgements}

The proof of Theorem~\ref{thm:main_positive} presented here is
based on an idea of Tim Gowers~\cite{Gowers}.  The authors proved this result with a
longer combinatorial argument, which we include in the appendix.  
On learning about the result,
Tim Gowers suggested the alternative, elegant proof given below.
We are grateful to him for letting us include his proof here.

\section{Proof of the main result}
\label{sec:pf_main}

\subsection{Eigenvalues of Cayley graphs of abelian groups}
\label{sec:evalues_of_Cayley}
Theorem~\ref{thm:lovasz} below tells us how to compute the eigenvalues of
Cayley graphs of abelian groups (Theorem~\ref{thm:lovasz} follows from a more
general result due to Lov\'asz~\cite{lovasz75:_spect}; see
also~\cite{Lo07}*{Exercise~11.8} and~\cite{babai79:_spect_cayley}).

Before we state Theorem~\ref{thm:lovasz}, we recall some basic facts about
group characters (for more details see, e.g., Serre~\cite{Se77}).  
Let~$\Gamma$ be a finite abelian group.  In this
case, an \textit{irreducible character}~$\chi$ of~$\Gamma$ may be
viewed as a group homomorphism
$\chi\:\Gamma\to S^1$, i.e., $\chi(a+b)=\chi(a)\chi(b)$ for all $a$, $b\in \Gamma$, where~$S^1$ is the multiplicative group of complex
numbers of absolute value~$1$.
If~$\Gamma$ has order~$n$, then there are~$n$ irreducible  characters, say,
$\chi_1,\dots,\chi_n$, and these characters satisfy the following
\textit{orthogonality property}:
\begin{equation}
  \label{eq:ortho}
  \<\chi_i,\chi_j\>=\sum_{\gamma\in\Gamma}\chi_i(\gamma)\chi_j(\gamma)=0
\end{equation}
for all~$i\neq j$.  These facts and a simple computation suffice to prove the
following well known result, the short proof of which we include for
completeness.  We shall use the following notation: if~$X$ is a set, we also 
write~$X$ for the $\{0,1\}$-indicator function of~$X$, so that~$X(a)=1$
if~$a\in X$ and~$X(a)=0$ otherwise.

\begin{theorem}
  \label{thm:lovasz}
  Let~$G=G(\Gamma,A)$ be a Cayley graph for some finite abelian group~$\Gamma$ and a symmetric
  set~$A=-A\subseteq\Gamma\setminus\{0\}$.  For any character~$\chi\:\Gamma\to
  S^1$ of~$\Gamma$, put
  \begin{equation}
    \label{eq:lovasz}
    \lambda^{(\chi)}=\<A,\chi\>=\sum_{a\in A}\chi(a).
  \end{equation}
  Then the eigenvalues of~$G$ are the~$\lambda^{(\chi)}$, where~$\chi$
  runs over all~$n=|\Gamma|$ irreducible characters of~$\Gamma$.
\end{theorem}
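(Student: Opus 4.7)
The plan is to exhibit an explicit eigenvector of the adjacency matrix~$\bfA$ for each irreducible character~$\chi$ of~$\Gamma$, and then use the orthogonality relation~\eqref{eq:ortho} to see that these eigenvectors span~$\CC^\Gamma$, so that we have located all eigenvalues.

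First, I would write the adjacency matrix entry-wise in terms of the connection set: $a_{\gamma,\gamma'} = A(\gamma'-\gamma)$, using the indicator convention stated before the theorem. Next, for each irreducible character $\chi\:\Gamma\to S^1$, I would introduce the vector $v_\chi\in\CC^\Gamma$ defined by $v_\chi(\gamma)=\chi(\gamma)$. The key calculation is then straightforward: for each $\gamma\in\Gamma$,
\begin{equation*}
  (\bfA v_\chi)(\gamma)
  = \sum_{\gamma'\in\Gamma} A(\gamma'-\gamma)\chi(\gamma')
  = \sum_{a\in A}\chi(\gamma+a)
  = \chi(\gamma)\sum_{a\in A}\chi(a)
  = \lambda^{(\chi)} v_\chi(\gamma),
\end{equation*}
where the third equality uses the homomorphism property $\chi(\gamma+a)=\chi(\gamma)\chi(a)$. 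Hence $v_\chi$ is an eigenvector of~$\bfA$ with eigenvalue~$\lambda^{(\chi)}$.

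It remains to show that as $\chi$ ranges over the $n$ irreducible characters of~$\Gamma$, we obtain all eigenvalues (with the correct multiplicities). This follows from the orthogonality property~\eqref{eq:ortho}: if $\chi_1,\dots,\chi_n$ are the~$n$ distinct irreducible characters, then $\<v_{\chi_i},v_{\chi_j}\>=0$ for $i\neq j$, where the inner product is the standard Hermitian one (note that $\overline{\chi_j(\gamma)}=\chi_j(-\gamma)$ is itself an irreducible character, so~\eqref{eq:ortho} applied to the pair $(\chi_i,\overline{\chi_j})$ yields orthogonality in the Hermitian sense for $i\neq j$). Thus the $v_\chi$ form an orthogonal basis of~$\CC^\Gamma$ consisting of eigenvectors of~$\bfA$, so the spectrum of~$G$ is exactly $\{\lambda^{(\chi)}:\chi\text{ irreducible character of }\Gamma\}$, counted with multiplicity.

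The only subtlety is the remark that the~$\lambda^{(\chi)}$ are real, which is needed for consistency with the symmetry of~$\bfA$: since $A=-A$ and $\chi(-a)=\overline{\chi(a)}$ for $\chi\:\Gamma\to S^1$, we have $\lambda^{(\chi)}=\sum_{a\in A}\chi(a)=\sum_{a\in A}\overline{\chi(a)}=\overline{\lambda^{(\chi)}}$. There is no real obstacle in this proof; the computation is essentially forced once one writes $\bfA$ as a group-convolution operator, and the main ``content'' is simply recognizing that characters diagonalize convolution on a finite abelian group.
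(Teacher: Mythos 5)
Your proof is correct and follows essentially the same approach as the paper: exhibit the character vectors $v_\chi$ as eigenvectors of the adjacency matrix by a direct computation using the homomorphism property of $\chi$, then invoke orthogonality of the $n$ irreducible characters to conclude that these eigenvectors form a basis and hence account for all eigenvalues. Your added care about the Hermitian versus bilinear pairing (and the remark that the $\lambda^{(\chi)}$ are real) is sound and if anything slightly tighter than the paper, which cites~\eqref{eq:ortho} without the complex conjugate; but the route is not substantively different.
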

\begin{proof}
  Let~$\chi\:\Gamma\to S^1$ be an irreducible character of~$\Gamma$. 
  Let~$\lambda^{(\chi)}$ be as defined in~\eqref{eq:lovasz}.
  Consider the vector $\bfv^{(\chi)}=(\chi(\gamma))_{\gamma\in\Gamma}^T$, with
  entries indexed by the elements of~$\Gamma=V(G)$.
  Let~$\bfA=(a_{\gamma\gamma'})_{\gamma,\gamma'\in\Gamma}$ be the
  adjacency matrix of~$G$.  
    
  Fix~$\gamma\in\Gamma$.  Observe that the
  $\gamma$-entry~$(\bfA\bfv^{(\chi)})_\gamma$ of the vector~$\bfA\bfv^{(\chi)}$
  is
  \begin{equation*}
    \label{eq:gamma-entry}
    (\bfA\bfv^{(\chi)})_\gamma=\sum_{a\in A}\chi(\gamma-a)=
    \sum_{a\in A}\chi(\gamma+a)
        =\Big(\sum_{a\in A}\chi(a)\Big)\chi(\gamma)
    =\lambda^{(\chi)}\chi(\gamma)\,,
  \end{equation*}
  and hence~$\bfA\bfv^{(\chi)}=\lambda^{(\chi)}\bfv^{(\chi)}$; that is,
  $\bfv^{(\chi)}$ is an eigenvector of~$\bfA$ with
  eigenvalue~$\lambda^{(\chi)}$.
  
  Let $\chi_j\:\Gamma\to S^1$ ($1\leq j\leq n$) be the irreducible characters
  of~$\Gamma$ and set~$\bfv_j=\bfv^{(\chi_j)}$ for all~$1\leq j\leq n$.
  By~\eqref{eq:ortho}, $\<\bfv_j,\bfv_{j'}\>=0$ if~$j\neq j'$.  Therefore,
  the~$\bfv_j$ ($1\leq j\leq n$) form an orthogonal basis of eigenvectors of
  the matrix~$\bfA$ and, hence, $\lambda^{(\chi_j)}$ ($j=1,\dots,n$) are
  indeed all the eigenvalues of~$G$.
\end{proof}

\begin{remark}
  \label{rem:triv_eigenvalue}
  The eigenvalue~$\lambda_1=d=|A|$ may be obtained from~\eqref{eq:lovasz} by
  letting~$\chi$ be the trivial character~$\chi(x)=1$ for all~$x\in\Gamma$.
\end{remark}

\subsection{The proof}
\label{sec:proof}
We shall prove that $\neg\EIG(\epsilon)\Rightarrow\neg\DISC(\delta)$.  By
Theorem~\ref{thm:lovasz} and Remark~\ref{rem:triv_eigenvalue}, our
assumption implies that there is a character~$\chi\not\equiv1$ such that
\begin{equation}
  \label{eq:assumption}
  |\lambda^{(\chi)}|=|\<A,\chi\>|\geq\epsilon|A|.  
\end{equation}
We shall fix this~$\chi$ and we shall use it to construct sets~$X$
and~$Y\subset V(G)$ that ``witness'' the fact that~$\neg\DISC(\delta)$ holds.

First we introduce some notation.  Let~$0\leq\chia(\gamma)<2\pi$ be defined
by~$\chi(\gamma)=\e^{\I\chia(\gamma)}$.  For~$\gamma\in\Gamma$, let
\begin{equation}
  \label{eq:c_def}
  c(\gamma)=\Re(\chi(\gamma))=\cos(\chia(\gamma))
\end{equation}
and 
\begin{equation}
  \label{eq:s_def}
  s(\gamma)=\Im(\chi(\gamma))=\sin(\chia(\gamma)).
\end{equation}
Applying the orthogonality relation~\eqref{eq:ortho} to~$\chi$ and the trivial
character~$\chi\equiv1$, denoted below by~$\bfone$, gives us that
\begin{equation}
  \label{eq:ortho.1}
  0=\<\bfone,\chi\>=\sum_{\gamma\in\Gamma}\e^{\I\chia(\gamma)}
  =\sum_{\gamma\in\Gamma}\(c(\gamma)+\I s(\gamma)\).
\end{equation}
Consequently, 
\begin{equation}
  \label{eq:triv_sum}
  \sum_{\gamma\in\Gamma}c(\gamma)
  =\sum_{\gamma\in\Gamma}s(\gamma)=0.
\end{equation}

Given two functions~$f$ and~$g\:\Gamma\to\CC$, let~$f*g\:\Gamma\to\CC$ be
their \textit{convolution}, given by
\begin{equation}
  \label{eq:conv_def}
  (f*g)(\alpha)=\sum_{\gamma\in\Gamma}f(\alpha-\gamma)g(\gamma).
\end{equation}
In what follows, we let~$m$ be the cardinality of the image of~$\chi$:
\begin{equation}
  \label{eq:m_def}
  m=|\{\chi(\gamma)\colond\gamma\in\Gamma\}|.
\end{equation}
Since~$\chi\not\equiv1$, we have~$m>1$.  We shall need the following
fact.

\begin{lemma}
  \label{lem:ident}
  We have
  \begin{enumerate}[label=\rmlabel]
  \item \label{it:identi} 
    \begin{equation}
      \label{eq:sum_c^2.1}
      \sum_{\gamma\in\Gamma}c^2(\gamma)=
      \begin{cases}
        n   &\text{if~$m=2$}\\
        n/2   &\text{if~$m>2$;}
      \end{cases}
    \end{equation}
  \item \label{it:identii}
    \begin{align}
      \<A,{1\over2}(1+c)*\frac{1}{2}(1+c)\>
      &={1\over4}n|A|+{1\over4}\<A,c*c\>\label{eq:ident}\\
      &=
      \begin{cases}
        {1\over4}n|A|+{1\over4}n\<A,c\>  &\text{if~$m=2$}\\
        {1\over4}n|A|+{1\over8}n\<A,c\>  &\text{if~$m>2$}.
      \end{cases}
      \label{eq:ident.2}
    \end{align}
  \end{enumerate}
\end{lemma}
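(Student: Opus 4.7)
The entire lemma reduces to straightforward Fourier analysis on the finite abelian group~$\Gamma$, and my plan is to derive everything from the orthogonality relation~\eqref{eq:ortho} together with two basic observations: $\chi^2$ is again an irreducible character of~$\Gamma$, and $\chi^2\equiv\bfone$ holds if and only if the image of~$\chi$ lies in $\{\pm1\}$, i.e., if and only if $m\le 2$.

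For part~\ref{it:identi}, I would use the double-angle identity
\[
	c^2(\gamma)=\cos^2(\chia(\gamma))=\tfrac12\bigl(1+\Re\chi^2(\gamma)\bigr),
\]
so that $\sum_{\gamma\in\Gamma}c^2(\gamma)=n/2+\tfrac12\Re\sum_{\gamma\in\Gamma}\chi^2(\gamma)$.  By~\eqref{eq:ortho} applied to $\chi^2$ and~$\bfone$, the remaining character sum equals $n$ when $\chi^2\equiv\bfone$ and $0$ otherwise, and the case split on~$m$ produces~\eqref{eq:sum_c^2.1}.

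For part~\ref{it:identii}, I would first expand bilinearly
\[
	\tfrac12(1+c)*\tfrac12(1+c)=\tfrac14\bigl(1*1+1*c+c*1+c*c\bigr),
\]
observe that $1*1\equiv n$ while $1*c\equiv c*1\equiv 0$ by~\eqref{eq:triv_sum}, and pair with~$A$ using $\langle A,1\rangle=|A|$ to obtain~\eqref{eq:ident}.  For~\eqref{eq:ident.2} I would write $c=\tfrac12(\chi+\bar\chi)$ and expand $c*c$ into four character convolutions.  The key computation is that for any two characters $\psi,\psi'$ of~$\Gamma$ one has
\[
	(\psi*\psi')(\alpha)=\psi(\alpha)\sum_{\gamma\in\Gamma}\psi(-\gamma)\psi'(\gamma),
\]
and the inner sum is an orthogonality sum for the product character $\bar\psi\psi'$, which equals $n$ if $\psi=\psi'$ and $0$ otherwise.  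Hence $\chi*\chi=n\chi$ and $\bar\chi*\bar\chi=n\bar\chi$ unconditionally, while the cross term $\chi*\bar\chi=\bar\chi*\chi$ either equals $n\chi=nc$ (if $m=2$, when $\chi^2\equiv\bfone$ and $\chi=\bar\chi$) or vanishes (if $m>2$, again by~\eqref{eq:ortho} applied to $\chi^2$).  Assembling these four contributions gives $c*c=nc$ when $m=2$ and $c*c=\tfrac n2\,c$ when $m>2$; pairing with~$A$ and multiplying by $\tfrac14$ then yields~\eqref{eq:ident.2}.

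The argument is almost entirely bookkeeping, and the only point requiring genuine care is the case distinction based on whether $\chi^2$ is trivial; everything else collapses to the orthogonality of characters.
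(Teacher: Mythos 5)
Your proof is correct, and it takes a cleaner, genuinely different route from the paper's. The paper proves both parts by explicit trigonometric identities: for~\ref{it:identi} it evaluates $\sum_{\gamma}c^2(\gamma)=\frac{n}{2m}\sum_{0\le\ell<m}\bigl(1+\cos\frac{4\pi\ell}{m}\bigr)$ as a geometric series; and for~\ref{it:identii} it first establishes, via the cosine addition formula $c(a-\gamma)=c(a)c(\gamma)+s(a)s(\gamma)$ together with the auxiliary identity $\sum_{\gamma}s(\gamma)c(\gamma)=0$, that $(c*c)(a)$ equals $nc(a)$ or $(n/2)c(a)$ according to whether $m=2$ or $m>2$, and only then expands $\langle A,\tfrac12(1+c)*\tfrac12(1+c)\rangle$. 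You replace all of this with the single decomposition $c=\tfrac12(\chi+\bar\chi)$ and orthogonality of characters: the geometric-series evaluation is precisely the orthogonality statement for $\chi^2$ against $\bfone$, the case split $m=2$ versus $m>2$ is exactly the dichotomy $\chi^2\equiv\bfone$ versus $\chi^2\not\equiv\bfone$, and the computation of $c*c$ reduces to the one-line identity $(\psi*\psi')(\alpha)=\psi(\alpha)\sum_{\gamma}\bar\psi(\gamma)\psi'(\gamma)$ applied to the four terms $\chi*\chi$, $\chi*\bar\chi$, $\bar\chi*\chi$, $\bar\chi*\bar\chi$. This eliminates the need for the paper's auxiliary identity $\sum s(\gamma)c(\gamma)=0$ and for the cosine addition formula altogether; what it costs is that you must observe (correctly) that $\chi^2$ is again a character and that $\chi^2\equiv\bfone$ iff $m\le2$. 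Both proofs ultimately rest on the same orthogonality, but your Fourier-analytic bookkeeping is more uniform and exposes the $m=2$ degeneracy ($\chi=\bar\chi$, so $c=\chi$) more transparently.
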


We postpone the proof of Lemma~\ref{lem:ident} to
Section~\ref{sec:proof_of_fact}, and proceed to prove our main theorem.
Let~$-X$ and~$Y\subset\Gamma$ be generated at random as follows: we
include~$\gamma\in\Gamma$ in~$-X$ with probability~$p(\gamma)=(1+c(\gamma))/2$
and we include~$\gamma\in\Gamma$ in~$Y$ with the same probability~$p(\gamma)$.
with all these events independent.

By~\eqref{eq:triv_sum} we have~$\sum_{\gamma\in\Gamma}p(\gamma)=n/2$.
Therefore, by a Chernoff type inequality (see, e.g., Alon and
Spencer~\cite{AlSp08}*{Theorem~A.1.4}), we have
\begin{equation}
  \label{eq:|X|}
  \PP\(|X|=\({1\over2}+o(1)\)n\)=1-o(1)
\end{equation}
and
\begin{equation}
  \label{eq:|Y|}
  \PP\(|Y|=\({1\over2}+o(1)\)n\)=1-o(1).
\end{equation}
In view of Lemma~\ref{lem:ident}~\ref{it:identi},
we have
\[
\sum_{\gamma\in\Gamma}p(-\gamma)p(\gamma)
=\sum_{\gamma\in\Gamma}p^2(\gamma)
=\frac{1}{4}\sum_{\gamma\in\Gamma}(1+c(\gamma))^2
\overset{\eqref{eq:triv_sum}}{=}\frac{1}{4}n+\frac{1}{4}\sum_{\gamma\in\Gamma}c(\gamma)^2
=\frac{3}{8}n
\] 
if~$m>2$
and~$\sum_{\gamma\in\Gamma}p(-\gamma)p(\gamma)=n/2$ if~$m=2$.  Consequently,
if~$m>2$, we have
\begin{equation}
  \label{eq:|XcapY|}
  \PP\(|X\cap Y|=\({3\over8}+o(1)\)n\)=1-o(1)
\end{equation}
and hence, in view of~\eqref{eq:|X|} and~\eqref{eq:|Y|}, we have 
\begin{equation}
  \label{eq:|XcupY|}
  \PP\(|X\cup Y|=\({5\over8}+o(1)\)n\)=1-o(1).
\end{equation}
Similarly, if~$m=2$, we have
\begin{equation}
  \label{eq:|XcapY|.2}
  \PP\(|X\cap Y|=\({1\over2}+o(1)\)n\)=1-o(1)
\end{equation}
and
\begin{equation}
  \label{eq:|XcupY|.2}
  \PP\(|X\cup Y|=\({1\over2}+o(1)\)n\)=1-o(1).
\end{equation}

On the other hand, in view of our assumption~\eqref{eq:assumption}
and $A=-A$ we have
\begin{equation}
  \label{eq:assumption_c}
  \epsilon|A|\leq|\<A,\chi\>|=|\<A,c\>|.
\end{equation}
Recall that~$p(\gamma)=(1+c(\gamma))/2$ is the probability that we
include~$\gamma$ in~$-X$ and in~$Y$.  By the linearity of the expectation and
the independence, we have\footnote{In~\eqref{eq:expectation}, we write~$(-X)$
  for the characteristic function of the set~$-X=\{-x\:x\in X\}$.}
\begin{align}
  \EE(\<A,(-X)*Y\>)
  &=\EE\Big(\sum_{a\in A}\sum_{\gamma\in\Gamma}
  (-X)(a-\gamma)Y(\gamma)\Big)\nonumber\\
  &=\sum_{a\in A}\sum_{\gamma\in\Gamma}
  \EE\((-X)(a-\gamma)\)\EE\(Y(\gamma)\)
  =\sum_{a\in A}\sum_{\gamma\in\Gamma}
  p(a-\gamma)p(\gamma)\nonumber\\
  &=\<A,{1\over2}(1+c)*{1\over2}(1+c)\>.
  \label{eq:expectation}
\end{align}
By Lemma~\ref{lem:ident}~\ref{it:identii}, we thus have
\begin{equation}
  \label{eq:expectation.2}
  \left|\EE(\<A,(-X)*Y\>)-{1\over4}n|A|\right|
  \geq{1\over8}n|\<A,c\>|\geq{1\over8}\epsilon n|A|.
\end{equation}
On the other hand, 
\begin{multline}
  \label{eq:from_E2e(X,Y)}
  \<A,(-X)*Y\>=\sum_{a\in A}\sum_{\gamma\in\Gamma}(-X)(a-\gamma)Y(\gamma)
  =\sum_{a\in A}\sum_{\gamma\in\Gamma}X(-a+\gamma)Y(\gamma)
  =e(X,Y),
\end{multline}
with the edges in~$X\cap Y$ counted twice.  Since~$0\leq e(X,Y)\leq n|A|$, the
random variable
\begin{equation}
  \label{eq:Z_def}
  \eta=\eta(X,Y)=\<A,(-X)*Y\>-{1\over4}n|A|=e(X,Y)-{1\over4}n|A|
\end{equation}
satisfies
\begin{equation}
  \label{eq:Z_bds}
  -{1\over4}n|A|\leq\eta\leq{3\over4}n|A|.
\end{equation}
Let~$q$ be the probability that~$|\eta|\leq\epsilon n|A|/16$.  Then,
by~\eqref{eq:expectation.2} and~\eqref{eq:Z_bds},
\begin{equation}
  \label{eq:p_bound}
  {1\over8}\epsilon n|A|\leq|\EE(\eta)|
  \leq\EE(|\eta|)\leq{1\over16}\epsilon n|A|q+{3\over4}n|A|(1-q),
\end{equation}
and, consequently,
\begin{equation}
  \label{eq:p_bounded.2}
  \PP\(|\eta|\leq{1\over16}\epsilon n|A|\)
  =q\leq{1-\epsilon/6\over1-\epsilon/12}\leq1-{1\over12}\epsilon.
\end{equation}
First consider the case in which~$m>2$.  Putting
together~\eqref{eq:|X|}--\eqref{eq:|XcupY|} and~\eqref{eq:p_bounded.2} we see
that there are sets~$X$ and~$Y\subset\Gamma$ for which we have
\begin{equation}
  \label{eq:|X|_correct}
  |X|=\({1\over2}+o(1)\)n,\qquad
  |Y|=\({1\over2}+o(1)\)n,
\end{equation}
\begin{equation}
  \label{eq:|XcapY|_correct}
  |X\cap Y|=\({3\over8}+o(1)\)n,\qquad
  |X\cup Y|=\({5\over8}+o(1)\)n,
\end{equation}
and
\begin{equation}
  \label{eq:e(X,Y)_deviant}
  \left|e(X,Y)-{1\over4}n|A|\right|
  \geq{1\over16}\epsilon n|A|.
\end{equation}
Fix such sets~$X$ and~$Y$.  Suppose that none of the sets~$X\setminus Y$,
$Y\setminus X$, $X\cup Y$, and~$X\cap Y$ violates~$\DISC(\delta)$.
Then for sufficiently large $n$ we have
\begin{equation}
  \label{eq:e(X-Y)}
  \left|e(X\setminus Y)-{1\over128}n|A|\right|<{2\over128}\delta n|A|,
  \qquad
  \left|e(Y\setminus X)-{1\over128}n|A|\right|<{2\over128}\delta n|A|,
\end{equation}
and
\begin{equation}
  \label{eq:e(XcapY)}
  \left|e(X\cap Y)-{9\over128}n|A|\right|<{10\over128}\delta n|A|,
  \qquad
  \left|e(Y\cup X)-{25\over128}n|A|\right|<{26\over128}\delta n|A|.
\end{equation}
Since
\begin{equation}
  \label{eq:e(X,Y)_decomp}
  e(X,Y)=e(X\cup Y)-e(X\setminus Y)-e(Y\setminus X)+e(X\cup Y),
\end{equation}
we infer that 
\begin{equation}
  \label{eq:e(X,Y)_deviation}
  \left|e(X,Y)-{32\over128}n|A|\right|<{40\over128}\delta n|A|,
\end{equation}
which contradicts~\eqref{eq:e(X,Y)_deviant} if~$\delta\leq\epsilon/5$.  The
proof for the case~$m>2$ is finished.

The case~$m=2$ is similar.  Putting together~\eqref{eq:|X|}, \eqref{eq:|Y|},
\eqref{eq:|XcapY|.2}, \eqref{eq:|XcupY|.2}, and~\eqref{eq:p_bounded.2} we see
that there are sets~$X$ and~$Y\subset\Gamma$ for which we have
\begin{equation}
  \label{eq:|X|_correct.2}
  |X|=\({1\over2}+o(1)\)n,\qquad
  |Y|=\({1\over2}+o(1)\)n,
\end{equation}
\begin{equation}
  \label{eq:|XcapY|_correct.2}
  |X\cap Y|=\({1\over2}+o(1)\)n,\qquad
  |X\cup Y|=\({1\over2}+o(1)\)n,
\end{equation}
and, moreover, with~$X$ and~$Y$ satisfying~\eqref{eq:e(X,Y)_deviant}.  Fix
such sets~$X$ and~$Y$.  Note that, then,
\begin{equation}
  \label{eq:e(X-Y).2}
  e(X\setminus Y)=o(n|A|)
  \quad\text{ and }\quad
  e(Y\setminus X)=o(n|A|).
\end{equation}
Suppose that neither~$X\cup Y$ nor~$X\cap Y$ violates~$\DISC(\delta)$.
Then for sufficiently large $n$ we have 
\begin{equation}
  \label{eq:e(XcapY).2}
  \left|e(X\cap Y)-{1\over8}n|A|\right|<{2\over8}\delta n|A|
  \quad\text{ and }\quad
  \left|e(Y\cup X)-{1\over8}n|A|\right|<{2\over8}\delta n|A|.
\end{equation}
Using~\eqref{eq:e(X,Y)_decomp} again, we infer that
\begin{equation}
  \label{eq:e(X,Y)_deviation.2}
  \left|e(X,Y)-{1\over4}n|A|\right|<{5\over8}\delta n|A|,
\end{equation}
which contradicts~\eqref{eq:e(X,Y)_deviant} if~$\delta\leq\epsilon/10$,
completing the proof in the case~$m=2$.

\subsection{Proof of Lemma~\ref{lem:ident}}
\label{sec:proof_of_fact}
We start with the following fact (Fact~\ref{fact:idents}~\ref{it:fidentsi} below is
simply Lemma~\ref{lem:ident}~\ref{it:identi}).

\begin{fact}
  \label{fact:idents}
  We have
  \begin{enumerate}[label=\rmlabel]
  \item\label{it:fidentsi} 
    \begin{equation}
      \label{eq:sum_c^2}
      \sum_{\gamma\in\Gamma}c^2(\gamma)=
      \begin{cases}
        n   &\text{if~$m=2$}\\
        n/2   &\text{if~$m>2$;}
      \end{cases}
    \end{equation}

  \item\label{it:fidentsii}
    \begin{equation}
      \label{eq:sum_sc}
      \sum_{\gamma\in\Gamma}s(\gamma)c(\gamma)=0;
    \end{equation}
  \item\label{it:fidentsiii} for any~$a\in\Gamma$
    \begin{equation}
      \label{eq:c*c(a)}
      (c*c)(a)=
      \begin{cases}
        nc(a)  &\text{if~$m=2$}\\
        (n/2)c(a)  &\text{if~$m>2$.}
      \end{cases}
    \end{equation}
  \end{enumerate}
\end{fact}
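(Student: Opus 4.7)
The plan is to express $c^2$, $cs$, and the convolution $c*c$ in terms of the character~$\chi$ together with its square~$\chi^2$ (defined by $\chi^2(\gamma)=\chi(\gamma)^2$), and then apply the orthogonality relation~\eqref{eq:ortho}. The key observation is that $\chi^2$ is again an irreducible character of~$\Gamma$ (it is obviously a group homomorphism into~$S^1$), and moreover $\chi^2\equiv\bfone$ if and only if~$m\leq 2$. Indeed, $\chi^2(\gamma)=1$ for every $\gamma$ means $\chi(\gamma)\in\{-1,1\}$ for every~$\gamma$, and since $\chi\not\equiv\bfone$ this happens precisely when $m=2$; when $m>2$, orthogonality against~$\bfone$ therefore gives $\sum_\gamma\chi^2(\gamma)=0$, while when $m=2$ this sum equals~$n$.

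For~\ref{it:fidentsi} and~\ref{it:fidentsii}, I use $|\chi(\gamma)|=1$ (so $c^2+s^2\equiv1$) together with the expansion
\[
\chi^2(\gamma)=(c(\gamma)+\I s(\gamma))^2=\bigl(c^2(\gamma)-s^2(\gamma)\bigr)+2\I\, c(\gamma)s(\gamma),
\]
which gives the pointwise identities
\[
c^2(\gamma)=\tfrac{1}{2}\bigl(1+\Re\chi^2(\gamma)\bigr)\qand c(\gamma)s(\gamma)=\tfrac{1}{2}\Im\chi^2(\gamma).
\]
Summing over $\gamma\in\Gamma$ and inserting the two values of $\sum_\gamma\chi^2(\gamma)$ above immediately yields both~\ref{it:fidentsi} and~\ref{it:fidentsii}.

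For~\ref{it:fidentsiii}, I use $\chi(-\gamma)=\overline{\chi(\gamma)}$ (which holds since $\chi(\gamma)\in S^1$ and $\chi(\gamma)\chi(-\gamma)=\chi(0)=1$) to write $c(\gamma)=\tfrac{1}{2}(\chi(\gamma)+\chi(-\gamma))$ and expand
\[
(c*c)(a)=\tfrac{1}{4}\sum_{\gamma\in\Gamma}\bigl(\chi(a-\gamma)+\chi(\gamma-a)\bigr)\bigl(\chi(\gamma)+\chi(-\gamma)\bigr).
\]
Two of the four cross products, namely $\chi(a-\gamma)\chi(\gamma)$ and $\chi(\gamma-a)\chi(-\gamma)$, collapse via $\chi(x)\chi(y)=\chi(x+y)$ to $\chi(a)$ and $\chi(-a)$, contributing $\tfrac{n}{4}(\chi(a)+\chi(-a))=\tfrac{n}{2}c(a)$. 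The remaining two products equal $\chi(a-2\gamma)=\chi(a)\chi^2(-\gamma)$ and $\chi(2\gamma-a)=\chi(-a)\chi^2(\gamma)$, so their joint contribution sums to $\tfrac{1}{2}c(a)\sum_\gamma\chi^2(\gamma)$. This is $\tfrac{n}{2}c(a)$ when $m=2$ and $0$ when $m>2$, producing the totals $nc(a)$ and $\tfrac{n}{2}c(a)$ claimed in~\eqref{eq:c*c(a)}.

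There is no real obstacle here; the only non-routine point is the dichotomy $\chi^2\equiv\bfone \iff m\le 2$, and once that is in place all three parts are direct bookkeeping with character identities of the same kind already used in the proof of Theorem~\ref{thm:lovasz}.
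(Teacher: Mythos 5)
Your proof is correct. The dichotomy $\chi^2\equiv\bfone\iff m\le 2$ is right (for $\chi\not\equiv\bfone$, which holds in the context where this Fact is used, so $m\geq 2$), the pointwise identities $c^2=\tfrac12(1+\Re\chi^2)$ and $cs=\tfrac12\Im\chi^2$ follow from $|\chi|\equiv1$, and the four-term expansion of $(c*c)(a)$ collapses exactly as you say, giving $(c*c)(a)=\tfrac12 c(a)\bigl(n+\sum_\gamma\chi^2(\gamma)\bigr)$, which matches~\eqref{eq:c*c(a)} in both cases.

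The route is genuinely different from the paper's, though the underlying cancellation is the same. The paper works with the real-valued functions $c$ and $s$ directly: for \ref{it:fidentsi} and \ref{it:fidentsii} it uses that $\im\chi$ is the set of $m$-th roots of unity, each attained $n/m$ times, then evaluates $\sum_{\ell<m}\cos(4\pi\ell/m)$ and $\sum_{\ell<m}\sin(4\pi\ell/m)$ by a geometric series; for \ref{it:fidentsiii} it uses the cosine addition formula $c(a-\gamma)=c(a)c(\gamma)+s(a)s(\gamma)$ and then appeals back to parts \ref{it:fidentsi}--\ref{it:fidentsii}. You instead keep everything in complex character algebra: the single observation that $\sum_\gamma\chi^2(\gamma)$ is $n$ or $0$ according as $\chi^2$ is trivial or not does all three parts at once, and part \ref{it:fidentsiii} is handled by direct expansion of $c=\tfrac12(\chi+\bar\chi)$ rather than by routing through \ref{it:fidentsi} and \ref{it:fidentsii}. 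Your version is a bit more conceptual and unified; the paper's is closer to elementary trigonometry and makes the dependence of \ref{it:fidentsiii} on the earlier parts explicit. Either is a valid proof of the Fact.
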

\begin{proof}
  \ref{it:fidentsi} We start by observing that
  \begin{equation}
    \label{eq:sum_cos}
    \sum_{0\leq\ell<m}\cos{4\pi\ell\over m}
    =
    \begin{cases}
      2   &\text{if~$m=2$}\\
      0   &\text{if~$m>2$}.\\
    \end{cases}
  \end{equation}
  Indeed, if~$m>2$, then the sum in~\eqref{eq:sum_cos} is
  \begin{equation}
    \label{sum_cos>2}
    \Re\sum_{0\leq\ell<m}\e^{4\pi\ell\I/m}
    =\Re{1-\e^{4\pi\I}
      \over1-\e^{4\pi\I/m}}
    =0.
  \end{equation}
  If~$m=2$, then the sum in~\eqref{eq:sum_cos} is easily seen to be~$2$.  We
  now observe that
  \begin{equation}
    \label{eq:sum_c^2.2}
    \sum_{\gamma\in\Gamma}c^2(\gamma)
    ={n\over m}\sum_{0\leq\ell<m}\cos^2\(2\pi\ell\over m\)
    ={n\over2m}\sum_{0\leq\ell<m}\(1+\cos{4\pi\ell\over m}\).
  \end{equation}
  It now suffices to recall~\eqref{eq:sum_cos} to deduce~\eqref{eq:sum_c^2};
  assertion~\ref{it:fidentsi} is therefore proved.  
  
  Now we prove~\ref{it:fidentsii}.
  Note that
  \begin{equation}
    \label{eq:sum_sin}
    \sum_{0\leq\ell<m}\sin{4\pi\ell\over m}=0.
  \end{equation}
  Therefore,
  \begin{equation}
    \label{eq:ii_pf}
    \sum_{\gamma\in\Gamma}s(\gamma)c(\gamma)
    ={n\over m}\sum_{0\leq\ell<m}
    \sin\(2\pi\ell\over m\)\cos\(2\pi\ell\over m\)
    ={n\over2m}\sum_{0\leq\ell<m}\sin{4\pi\ell\over m}=0,
  \end{equation}
  as required.  
  
  For the proof of~\ref{it:fidentsiii}, we start by noticing that 
  \begin{align*}
    c(a-\gamma)&=\cos(\chia(a-\gamma))
    =\cos\(\chia(a)-\chia(\gamma)\)\\
    &=\cos\chia(a)\cos\chia(\gamma)+\sin\chia(a)\sin\chia(\gamma)
    =c(a)c(\gamma)+s(a)s(\gamma).
  \end{align*}
  Therefore, 
  \begin{align*}
    (c*c)(a)&=\sum_{\gamma\in\Gamma}c(a-\gamma)c(\gamma)
    =\sum_{\gamma\in\Gamma}\(c(a)c(\gamma)+s(a)s(\gamma)\)c(\gamma)\\
    &=\sum_{\gamma\in\Gamma}\(c(a)c^2(\gamma)+s(a)s(\gamma)c(\gamma)\)
    =c(a)\sum_{\gamma\in\Gamma}c^2(\gamma)
    +s(a)\sum_{\gamma\in\Gamma}s(\gamma)c(\gamma).
  \end{align*}
  Eq.~\eqref{eq:c*c(a)}~follows from~\eqref{eq:sum_c^2} and~\eqref{eq:sum_sc}
  and~\ref{it:fidentsiii} is proved.
\end{proof}

\begin{proof}[Proof of Lemma~\ref{lem:ident}]
  Lemma~\ref{lem:ident}~\ref{it:identi} has already been proved.  We now turn
  to~\ref{it:identii}.  The left-hand side of~\eqref{eq:ident} is
  \begin{align}
    &{1\over4}\sum_{a\in A}\sum_{\gamma\in\Gamma}
    \((1+c)(a-\gamma)\)\((1+c)(\gamma)\)\nonumber\\
    &\qquad\qquad\qquad={1\over4}\sum_{a\in A}\sum_{\gamma\in\Gamma}
    \(1+c(a-\gamma)\)\(1+c(\gamma)\)\nonumber\\
    &\qquad\qquad\qquad={1\over4}n|A|
    +{1\over4}\sum_{a\in A}\sum_{\gamma\in\Gamma}
    \(c(a-\gamma)+c(\gamma)\)
    +{1\over4}\sum_{a\in A}\sum_{\gamma\in\Gamma}
    c(a-\gamma)c(\gamma)\nonumber\\
    &\qquad\qquad\qquad={1\over4}n|A|
    +{1\over4}\sum_{a\in A}\sum_{\gamma\in\Gamma}
    c(a-\gamma)c(\gamma)\nonumber\\
    &\qquad\qquad\qquad={1\over4}n|A|
    +{1\over4}\<A,c*c\>,\label{eq:ident.3}
  \end{align}
  which verifies~\eqref{eq:ident}.  Clearly,
  Fact~\ref{fact:idents}~\ref{it:fidentsiii} and~\eqref{eq:ident.3}
  imply~\eqref{eq:ident.2}.  
\end{proof}

\appendix
\section*{Appendix A}
\renewcommand{\thesection}{A}
\subsection{Combinatorial proof of Theorem~\ref{thm:main_positive}}
\label{sec:proof_of_main}
We include our original proof of Theorem~\ref{thm:main_positive} here. Let a
constant~$\eps>0$ be given.  The aim is to find some $\delta>0$ for which
property~$\DISC(\delta)$ implies~$\EIG(\eps)$ for any Cayley
graph~$G=G(\Gamma,A)$.  Let us once and for all fix an abelian group~$\Gamma$
and a symmetric set~$A\subseteq\Gamma\setminus\{0\}$.  In what follows, we
write~$G$ for the Cayley graph~$G(\Gamma,A)$.  We shall always write~$n$ for
the number of vertices in~$G$, i.e., $n=|\Gamma|=|V(G)|$.  We also
let~$|A|=\alpha n$.

Clearly, our graph~$G$ is $|A|$-regular.  Therefore, the density of
the graph~$G$ is
\begin{equation}\label{eq:density_G}
  e(G)\left/\binom{n}{2}\right.=\frac{|A|}{n-1}\,.
\end{equation}
Moreover,
as is well known,
condition~\textrm{(\textit{i})} of Definition~\ref{def:EIG_epsilon} is automatically
fulfilled.  We should therefore consider condition~(\textit{ii}) of that
definition. Because of Theorem~\ref{thm:lovasz}, our task is to estimate
the~$\lambda^{(\chi)}$ given in~\eqref{eq:lovasz}.  More precisely, we have to
show that if~$\chi\not\equiv1$, then
\begin{equation}
  \label{eq:ultimate_EIG_bd}
  |\lambda^{(\chi)}|=\Big|\sum_{a\in A}\chi(a)\Big|\leq\eps|A|.
\end{equation}
Thus, let~$\chi\:\Gamma\to S^1$ be a fixed, non-constant irreducible character
of~$\Gamma$.  We shall estimate~$\lambda^{(\chi)}$ in two different ways,
according to the cardinality of~$\im\chi=\{\chi(\gamma)\:\gamma\in\Gamma\}$.
In what follows, we always write~$m$ for~$|\im\chi|$. 
We also use the bijection $\e^{\theta\bfi}$, 
mapping every~$\theta$ in $\RR/2\pi\RR$ to $\e^{\theta\bfi}$ in $S^1$.
We define 
$$
\chia\colond\Gamma\to\RR/2\pi\RR
$$
to be the homomorphism such that for every $\gamma\in\Gamma$
\begin{align*}
\chia(\gamma)=\arg\(\chi(\gamma)\)\quad \text{and}\quad
\chi(\gamma)=\e^{\chia(\gamma)\bfi}\,.
\end{align*}

Furthermore, we let~$\Omega\:\ZZ/m\ZZ\to \RR/2\pi\RR$ be the homomorphism~
$$
\Omega(s)=\frac{2\pi}{m}s\quad\text{for}\quad s\in\ZZ/m\ZZ\,.
$$ 
We also have a homomorphism~$\rho\:\Gamma\to\ZZ/m\ZZ$ for
which~$\chia=\Omega\rho$ holds, so that
\[
	\chia(\gamma)=\frac{2\pi\rho(\gamma)}{m}
\]
for every~$\gamma$ in~$\Gamma$. 
Summarising the above, from now on we will work with the following setup.

\begin{setup}\label{setup:1}
  Let $G=G(\Gamma,A)$ be the Cayley graph given by the abelian group~$\Gamma$
  and the symmetric set $A=-A\subseteq\Gamma\setminus\{0\}$.  The graph~$G$ is
  of order~$n=|\Gamma|$, every vertex has degree $|A|=\alpha n$, and the density of the
  graph is $|A|/(n-1)$.

  Fix an irreducible character $\chi\not\equiv 1$, set $m=|\im\chi|$,
  and let $\chia$, $\Omega$, and $\rho$ (depending on $\chi$) be group
  homomorphisms such that the following diagram commutes:
  $$
  \xymatrix{
    \Gamma \ar[d]_{\rho} \ar[drr]^{\chia} \ar[rr]^{\chi} &&
    {S^1\subset\CC}\\
    {\ZZ/m\ZZ} \ar[rr]_{\Omega} && 
    {\RR/2\pi\RR} \ar[u]_{\e^{\theta\bfi}} 
  }
  $$
\end{setup}

As mentioned above 
we consider two cases for the proof of
Theorem~\ref{thm:main_positive}. 
In the first case~$m$ will be small. The following lemma will handle that case.

\begin{lemma}\label{lem:m_small}
  For every $\delta'>0$ there is an $n_0\geq 0$ such that if
  $|\Gamma|=n\geq n_0$, $m\leq 1/\delta'$, and $G=G(\Gamma,A)$ 
  satisfies $\DISC_2(\delta')$, then 
  $$
  |\lambda^{(\chi)}|\leq 2\delta'|A|\,.
  $$
\end{lemma}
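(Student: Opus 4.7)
The plan is to decompose $\Gamma$ into the level sets of $\rho$ and exploit $\DISC_2$ to show that $A$ is approximately equidistributed among them. For each $s\in\ZZ/m\ZZ$ I would set $\Gamma_s=\rho^{-1}(s)$; since $|\im\chi|=m$, the homomorphism $\rho$ is surjective, so each $\Gamma_s$ is a coset of $\ker\rho$ of size $n/m$, and $\chi$ is identically $\zeta^s$ on $\Gamma_s$, where $\zeta=\e^{2\pi\bfi/m}$. Writing $a_r=|A\cap\Gamma_r|$, this gives at once
$$\lambda^{(\chi)}=\sum_{r=0}^{m-1}\zeta^r a_r.$$

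The next step is a short edge-count using the Cayley structure: for $u\in\Gamma_s$ a vertex $v\in\Gamma_t$ is a neighbour of $u$ iff $v-u\in A\cap\Gamma_{t-s}$, so
$$e_G(\Gamma_s,\Gamma_t)=\frac{n}{m}\,a_{t-s}\qquad(s\neq t).$$
The assumption $m\leq 1/\delta'$ gives $|\Gamma_s|=n/m\geq\delta'n$, so $\DISC_2(\delta')$ applies to every disjoint pair $(\Gamma_s,\Gamma_t)$. Matching the expected count $e(G)(n/m)^2/\binom{n}{2}=n^2|A|/(m^2(n-1))$ with the edge-count above then yields
$$a_r=(1+O_1(\delta'))\cdot\frac{n}{m(n-1)}\,|A|$$
for every $r\neq 0$. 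Taking $n_0=n_0(\delta')$ sufficiently large absorbs the $n/(n-1)$ correction, so that $|a_r-|A|/m|$ is bounded by (essentially) $\delta'|A|/m$ whenever $r\neq 0$.

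To conclude, I would use $m\geq 2$ (which holds since $\chi\not\equiv 1$) together with the orthogonality identity $\sum_{r=0}^{m-1}\zeta^r=0$ to rewrite
$$\lambda^{(\chi)}=\sum_{r=0}^{m-1}\zeta^r\bigl(a_r-|A|/m\bigr).$$
Since $\sum_r a_r=|A|$, one has $a_0-|A|/m=-\sum_{r\neq 0}(a_r-|A|/m)$, so the $r=0$ term is also controlled by $(m-1)\delta'|A|/m$. The triangle inequality then gives $|\lambda^{(\chi)}|\leq 2(m-1)\delta'|A|/m<2\delta'|A|$, with the slight extra slack from the $n/(n-1)$ factor absorbed by the choice of $n_0$.

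The main obstacle is purely bookkeeping: ensuring that the multiplicative $\DISC_2$ error together with the $n/(n-1)$ correction combine to deliver the clean stated bound $2\delta'|A|$ once $n_0$ is chosen appropriately. Conceptually, however, the argument simply says that if $A$ is equidistributed among the cosets of $\ker\rho$, then $\chi$ must sum to essentially zero on $A$.
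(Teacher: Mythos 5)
Your argument is correct and follows essentially the same route as the paper's own proof: decompose $\Gamma$ into the cosets $\rho^{-1}(r)$ of $\ker\rho$, use the Cayley structure to compute $e_G(\rho^{-1}(s),\rho^{-1}(t))=\frac{n}{m}|A\cap\rho^{-1}(t-s)|$, invoke $\DISC_2$ on pairs of cosets (each of size $n/m\geq\delta'n$) to show $a_r\approx|A|/m$, and finish with $\sum_r\omega^r=0$. One small point in your favour: you correctly note that $\DISC_2$ only controls $a_r$ for $r\neq0$ (disjoint pairs) and handle $a_0$ via the identity $\sum_r a_r=|A|$, picking up the factor $(m-1)/m<1$ that gives exactly the slack needed; the paper's write-up asserts the coset estimate ``for all $r\in\ZZ/m\ZZ$'' including $r=0$, which is a slight imprecision that your treatment avoids.
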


For the other case ($m$ large), we shall need 
three auxiliary lemmas to verify~\eqref{eq:ultimate_EIG_bd}.  
The proofs of these three lemmas, as well as the proof of
Lemma~\ref{lem:m_small}, are given in 
Sections~\ref{sec:proof_of_m_small}--\ref{sec:ZINTDISC_to_EIG}.
 We start with two definitions.

\begin{definition}[$\ZINTDISC(\rho;\eta,\sigma)$]
  \label{def:Z-INT-DISC}
  For positive reals $\eta$ and $\sigma$, we say that~$A$
  satisfies $\ZINTDISC(\rho;\eta,\sigma)$
  if for all integers~$0\leq D_1<D_2\leq\lfloor m/2\rfloor+1$ such
  that~$D_2-D_1\geq\eta m$ we have
  \begin{equation}
    \label{eq:Z-INT-DISC}
    \left|A\cap\rho^{-1}\big([D_1,D_2)\big)\right|
      \sim_\sigma\frac{D_2-D_1}{m}|A|.
  \end{equation}
\end{definition}
Roughly speaking, a set~$A$ satisfies $\ZINTDISC$ if its
image under $\rho$ intersects ``large'' intervals uniformly.
Next we define a very similar property for~$A$ with respect to 
$\chia$ and intervals in~$\RR/2\pi\RR$.
\begin{definition}[$\SINTDISC(\chia;\eta,\sigma)$]
  \label{def:S-INT-DISC}
  For positive reals $\eta$ and $\sigma$, we say that $A$
  satisfies $\SINTDISC(\chia;\eta,\sigma)$
  if for all reals~$0\leq \theta_1<\theta_2\leq\pi$ such
  that~$\theta_2-\theta_1\geq2\pi\eta$ we have
  \begin{equation}
    \label{eq:S-INT-DISC}
    \left|A\cap\chia^{-1}\big([\theta_1,\theta_2]\big)\right|
      \sim_\sigma\frac{\theta_2-\theta_1}{2\pi}|A|.
  \end{equation}
\end{definition}

Basically, the next three lemmas give the following implications
for large~$m$:
$$
\DISC\Longrightarrow\ZINTDISC\Longrightarrow
\SINTDISC\Longrightarrow\EIG\,.
$$
These lemmas are stated under the assumptions of 
Setup~\ref{setup:1}; in particular, we recall that~$\rho$ and $\chia$
depend on the fixed, non-constant character $\chi$.
\begin{lemma}
  \label{lem:auxZ}
  For all positive reals~$\eta$ and $\sigma$, there
  are~$\delta=\delta(\eta,\sigma)>0$ and $n_0\geq 0$ such that
  if $|\Gamma|=n\geq n_0$, $m> 1/\delta$, and~$G=G(\Gamma,A)$ 
  satisfies~$\DISC(\delta)$, then~$A$ satisfies
  $\ZINTDISC(\rho;\eta,\sigma)$. 
\end{lemma}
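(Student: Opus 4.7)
The plan is to prove the contrapositive: if $\ZINTDISC(\rho;\eta,\sigma)$ fails, then $\DISC(\delta)$ fails for a suitable $\delta=\delta(\eta,\sigma)$. Suppose that there exists an interval $J=[D_1,D_2)\subseteq[0,\lfloor m/2\rfloor+1)$ with $L:=D_2-D_1\geq\eta m$ such that $N(J):=|A\cap\rho^{-1}(J)|$ fails to be $\sim_\sigma(L/m)|A|$. The key structural observation is that $\rho\colon\Gamma\to\ZZ/m\ZZ$ is a surjective homomorphism with $|\ker\rho|=n/m$, so $\rho^{-1}(S)$ is a disjoint union of $|S|$ cosets of $\ker\rho$. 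A short calculation using the Cayley edge relation $uv\in E\Leftrightarrow v-u\in A$ then gives, for any disjoint $I_1,I_2\subseteq\ZZ/m\ZZ$,
\[
e_G\!\bigl(\rho^{-1}(I_1),\rho^{-1}(I_2)\bigr) \;=\; \frac{n}{m}\sum_{k\in I_1}N(I_2-k),
\]
and analogously $2e_G(\rho^{-1}(S))=(n/m)\sum_{a\in A}|S\cap(S-\rho(a))|$ for any $S$.

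First I would use Fact~\ref{lem:DISC_implies_DISC_2} to pass from $\DISC(\delta)$ to $\DISC_2(\delta')$ for some $\delta'=\delta'(\delta)$. Applying $\DISC_2(\delta')$ to $\rho^{-1}(I_1),\rho^{-1}(I_2)$ for disjoint intervals $I_1,I_2\subseteq\ZZ/m\ZZ$ of length at least $\delta'm$, the identity above yields the averaged discrepancy estimate
\[
\sum_{k\in I_1}N(I_2-k)\;\sim_{\delta'}\;\frac{|A|\,|I_1|\,|I_2|}{m}.
\]
Specializing to $I_2=J$ and letting $I_1$ vary over short intervals of length $\ell$ adjacent to $J$, one reads this as saying that on every length-$\ell$ window near $D_1$, the sum of $M(s):=N([s,s+L))$ is approximately $(\ell L/m)|A|$. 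Subtracting two adjacent windows of length $\ell$ then produces the $\ell$-Lipschitz estimate $|M(s+\ell)-M(s)|\leq 2\delta'(\ell L/m)|A|$.

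The concluding step would iterate this Lipschitz-type bound together with the window-average estimate to conclude that $M(D_1)=N(J)\sim_\sigma(L/m)|A|$, contradicting the assumed failure of $\ZINTDISC$. I expect the main obstacle to be the parameter balancing: the Lipschitz bound scales linearly with $\ell$, while $\DISC_2(\delta')$ requires $\ell\geq\delta'm$, which naively constrains $\delta'$ in a way that depends on $m$ rather than only on $\eta,\sigma$. A direct Lipschitz iteration therefore seems insufficient for arbitrarily large $m$. To circumvent this, I would instead build a non-interval set $S\subseteq\ZZ/m\ZZ$ as a union of translates of $J$ and $-J$ (exploiting the symmetry $A=-A$), chosen so that the autocorrelation $|S\cap(S-k)|$ acts as a ``test function'' that amplifies, rather than averages out, the assumed bias of $N$ on $J$. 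Applying $\DISC(\delta)$ to $U=\rho^{-1}(S)$ through the identity $2e_G(U)=(n/m)\sum_a|S\cap(S-\rho(a))|$ should then produce the required violation of $\DISC(\delta)$ directly, with a $\delta$ depending only on $\eta$ and $\sigma$.
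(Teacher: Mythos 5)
Your starting point is sound: the identity $e_G(\rho^{-1}(I_1),\rho^{-1}(I_2))=\tfrac{n}{m}\sum_{k\in I_1}N(I_2-k)$ is exactly the paper's Lemma~\ref{lem:weighted_graph} applied to intervals, and you correctly diagnose the central difficulty: a direct window/Lipschitz iteration cannot close because $\DISC_2(\delta')$ only gives information at scales $\geq\delta' m$, while the errors that such iterations accumulate grow with the same scale. However, the remedy you sketch does not fill this gap. A single $\DISC$-test with a set $S\subseteq\ZZ/m\ZZ$ yields the test function $g(k)=|S\cap(S-k)|$, which is an autocorrelation: it is positive-definite, symmetric, nonnegative, and always attains its maximum $|S|$ at $k=0$. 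For $J=[D_1,D_2)$ bounded away from $0$, no such $g$ can ``amplify'' the bias of $N$ on $J\cup(-J)$ without simultaneously carrying substantial mass near $0$ and between $-J$ and $J$ (its Fourier transform $|\hat S|^2\geq0$ forces this spread). Controlling that off-interval mass would require exactly the uniform control of $N$ that one is trying to establish, so the argument is circular as stated.

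The paper's proof circumvents this obstruction by never relying on autocorrelations away from the origin. It works with bipartite counts via $\DISC_2$, so the test functions are cross-correlations of two intervals of the same width, i.e.\ triangular weights centred at the separation $t-s$ (Lemma~\ref{lem:weight_between_two_intervals}), with no positive-definiteness constraint. The key technical move (Lemma~\ref{lem:distribution_of_A}) is then to take the \emph{difference} of two such triangular weights, one from intervals of width $\ell+d$ and one from width $\ell$: the result is a trapezoid that is constant equal to $d$ on the target $[d_1,d_2)$ and has nonnegative ramps of total width $2d$ lying strictly \emph{outside} $[d_1,d_2)$. Discarding the ramp contributions (they are nonnegative) yields an upper bound on $d\,|A\cap\rho^{-1}([d_1,d_2))|$, while a companion difference whose ramps lie strictly \emph{inside} $[d_1,d_2)$ yields the matching lower bound. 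Since $d$ can be chosen as a small multiple of $\eta\sigma m$ and $\ell\approx(d_2-d_1)/2\geq\eta m/2$, one sets $\delta'\asymp(\eta\sigma)^2$, which is where the uniformity in $m$ comes from. Finally, because the trapezoid construction needs $[d_1,d_2)$ buffered away from $0$ and $\lfloor m/2\rfloor$, the paper treats the boundary intervals $[0,d)$ and $[\lfloor m/2\rfloor-d-1,\lfloor m/2\rfloor+1)$ separately (Lemmas~\ref{lem:initial_interval} and \ref{lem:final_interval}); near $0$ an autocorrelation argument actually \emph{does} work precisely because the autocorrelation peak at $0$ aligns with the target there, but this is special to the endpoints and does not generalize to your set $S$. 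Your sketch omits the endpoint step entirely.
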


\begin{lemma}
  \label{lem:translation}
  For all positive reals~$\eta\leq1$ and $\sigma\leq1$ 
  such that~$m\eta\sigma\geq3$, the following holds.  If~$A$
  satisfies property $\ZINTDISC(\rho;\eta/2,\sigma/3)$, then~$A$ satisfies
  property $\SINTDISC(\chia;\eta,\sigma)$.
\end{lemma}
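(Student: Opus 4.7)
The plan is to discretize: a real arc $[\theta_1,\theta_2]\subseteq[0,\pi]$ in $\RR/2\pi\RR$ pulls back under $\Omega$ to an integer interval in $\ZZ/m\ZZ$, and since $\chia=\Omega\rho$, the $\chia$-preimage of the arc equals the $\rho$-preimage of the integer interval. I then feed that integer interval into $\ZINTDISC(\rho;\eta/2,\sigma/3)$ and absorb the rounding error using the standing assumption $m\eta\sigma\geq 3$.

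Concretely, given $[\theta_1,\theta_2]$ with $\theta_2-\theta_1\geq 2\pi\eta$, I will set
\[
D_1=\llceil \tfrac{m\theta_1}{2\pi}\rrceil,\qquad D_2=\llfloor \tfrac{m\theta_2}{2\pi}\rrfloor+1,
\]
so that the integers $s\in\{0,\dots,m-1\}$ with $\Omega(s)\in[\theta_1,\theta_2]$ are exactly those in $[D_1,D_2)$, hence $\chia^{-1}([\theta_1,\theta_2])=\rho^{-1}([D_1,D_2))$. Since $\theta_2\leq\pi$, I get $D_2\leq\lfloor m/2\rfloor+1$, and trivially $0\leq D_1$, so $[D_1,D_2)$ is an admissible interval for $\ZINTDISC$. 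The length satisfies
\[
D_2-D_1\;\geq\;\frac{m(\theta_2-\theta_1)}{2\pi}-1\;\geq\;\eta m-1,
\]
and because $m\eta\sigma\geq 3$ and $\sigma\leq1$ I have $m\eta\geq 3$, so $\eta m-1\geq(2/3)\eta m\geq \eta m/2$. Thus $\ZINTDISC(\rho;\eta/2,\sigma/3)$ applies and yields
\[
\bigl|A\cap\rho^{-1}([D_1,D_2))\bigr|\;\sim_{\sigma/3}\;\frac{D_2-D_1}{m}|A|.
\]

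It remains to compare $(D_2-D_1)/m$ with $(\theta_2-\theta_1)/(2\pi)$. By construction these two reals differ by at most $1/m$, and since $(\theta_2-\theta_1)/(2\pi)\geq\eta$, the relative error is at most $1/(m\eta)\leq\sigma/3$, again by the hypothesis $m\eta\sigma\geq 3$. Multiplying the two $\sim_{\sigma/3}$ relations and using the elementary bounds $(1+\sigma/3)^2\leq 1+\sigma$ and $(1-\sigma/3)^2\geq 1-\sigma$ valid for $\sigma\leq 1$, I conclude
\[
\bigl|A\cap\chia^{-1}([\theta_1,\theta_2])\bigr|\;\sim_\sigma\;\frac{\theta_2-\theta_1}{2\pi}|A|,
\]
which is exactly $\SINTDISC(\chia;\eta,\sigma)$. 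The only subtlety is the bookkeeping on the two $\sigma/3$ slacks (one from $\ZINTDISC$, one from the rounding) and verifying they combine into a single $\sigma$ error; there is no genuine obstacle here since the hypothesis $m\eta\sigma\geq 3$ was tailored precisely to control the rounding term.
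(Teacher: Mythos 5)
Your proof is correct and follows essentially the same route as the paper's: the same choice of $D_1=\lceil m\theta_1/(2\pi)\rceil$ and $D_2=\lfloor m\theta_2/(2\pi)\rfloor+1$, the same identification $\chia^{-1}([\theta_1,\theta_2])=\rho^{-1}([D_1,D_2))$, the same use of $m\eta\sigma\geq 3$ to absorb the $O_1(1)$ rounding error into a relative error of $\sigma/3$, and the same final composition of the two $\sim_{\sigma/3}$ relations. The only cosmetic difference is that you write the lower bound $D_2-D_1\geq \eta m-1\geq (2/3)\eta m\geq \eta m/2$ with an intermediate step, whereas the paper goes directly from $\eta m-1$ to $\eta m/2$; both are justified by $m\eta\geq 3$.
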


\begin{lemma}
  \label{lem:auxS}
  For every real~$\eps>0$, there are reals~$\eta=\eta(\eps)>0$ and 
  $\sigma=\sigma(\eps)>0$ for which the following holds.  
  If~$A$ satisfies~$\SINTDISC(\chia;\eta,\sigma)$,
  then
  \begin{equation}
    \label{eq:auxS}
    |\lambda^{(\chi)}|=\Big|\sum_{a\in A}\chi(a)\Big|
    \leq\eps|A|.
  \end{equation}
\end{lemma}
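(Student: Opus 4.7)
Since $A = -A$ and $\chi(-a) = \overline{\chi(a)}$, the imaginary parts cancel in $\sum_{a \in A} \chi(a)$, leaving
\[
    \lambda^{(\chi)} = \sum_{a \in A} \chi(a) = \sum_{a \in A} \cos(\chia(a)).
\]
The strategy is to view this as a Riemann-type sum for $\int_0^\pi \cos\theta\,d\theta = 0$, using $\SINTDISC$ to show that the count of $a \in A$ whose cosine value falls in any given slot is close to the predicted density.

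Given $\eps > 0$, choose an integer $N = N(\eps)$ large enough that (a) $\cos$ varies by less than $\eps/3$ on any sub-interval of $[0,\pi]$ of length $\pi/N$, and (b) the Riemann sum $\tfrac{1}{N}\sum_{j=0}^{N-1}\cos(j\pi/N)$ approximates $\tfrac{1}{\pi}\int_0^\pi\cos\theta\,d\theta = 0$ to within $\eps/3$. Set $\eta = 1/(2N)$ and $\sigma = \eps/3$, and partition $[0,\pi]$ into the $N$ intervals $I_j = [j\pi/N,(j+1)\pi/N]$, each of length $\pi/N = 2\pi\eta$. Because $\chia(-a) = -\chia(a)$ in $\RR/2\pi\RR$ and $A = -A$, for every $a \in A$ with $\chia(a) \notin \{0,\pi\}$ exactly one of $a, -a$ has $\chia$-value in $[0,\pi]$, and $\cos(\chia(a)) = \cos(\chia(-a))$; consequently the number of $a \in A$ whose cosine value lies in the $j$-th slot is $2\,|A \cap \chia^{-1}(I_j)|$, which by $\SINTDISC(\chia;\eta,\sigma)$ equals $(|A|/N)(1+O_1(\sigma))$.

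Replacing each $\cos(\chia(a))$ by $\cos(j\pi/N)$ within the $j$-th slot (error at most $\eps/3$ per element by (a)) and substituting the $\SINTDISC$ estimate yields
\[
    \sum_{a\in A}\cos(\chia(a)) = \frac{|A|}{N}\sum_{j=0}^{N-1}\cos(j\pi/N) + O(\sigma|A|) + O(\eps|A|),
\]
which is bounded in absolute value by $\eps|A|$ thanks to the choices of $N$ and $\sigma$. The one delicacy is that elements with $\chia(a) \in \{0,\pi\}$ do not pair cleanly under $a \leftrightarrow -a$, but $\SINTDISC$ applied to a short interval of length $2\pi\eta$ around each boundary point forces their total count to be $O(|A|/N)$, which is absorbed into the error. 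I expect no substantive obstacle beyond carefully coordinating the three parameters $N$, $\eta = 1/(2N)$, and $\sigma$ as functions of $\eps$.
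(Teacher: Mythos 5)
Your proposal is correct and follows essentially the same approach as the paper's proof: both exploit $A=-A$ to reduce to the real part, partition the upper half of $S^1$ into arcs of width proportional to $\eta$, apply $\SINTDISC$ to control the count of $A$-elements in each arc, and then approximate $\sum_{a\in A}\cos(\chia(a))$ by a short-interval (Riemann-sum/telescoping) argument; the paper organizes the arcs in symmetric pairs around $\pi/2$ so the cosines telescope, whereas you compute the discrete Riemann sum directly, but these are cosmetically different executions of the same idea. Your acknowledged delicacy about the closed intervals overlapping at endpoints, and about elements with $\chia(a)\in\{0,\pi\}$, is exactly what the paper's Claim~\ref{claim:SINT_variant} is devised to handle.
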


We now assume Lemmas~\ref{lem:m_small}, \ref{lem:auxZ}, \ref{lem:translation},
and~\ref{lem:auxS} and give the proof of Theorem~\ref{thm:main_positive}.  (We
present the proofs of those auxiliary results in
Section~\ref{sec:auxiliaries}.)

\begin{proof}[Proof of Theorem~\ref{thm:main_positive}]
  Let~$\epsilon>0$ be given.  We apply Lemma~\ref{lem:auxS}, which yields the
  positive constants $\eta=\eta(\eps)$ and $\sigma=\sigma(\eps)$.  Then
  Lemma~\ref{lem:auxZ} gives
  $\delta_{\ref{lem:auxZ}}=\delta_{\ref{lem:auxZ}}(\eta/2,\sigma/3)$.  We set
  $$
  \delta'=\min\left\{\delta_{\ref{lem:auxZ}},
    \frac{\eta\sigma}{3},\frac{\eps}{2}\right\}\,.
  $$
  We now choose $\delta$ promised by Theorem~\ref{thm:main_positive} to be
  $$
  \delta=\min\{\delta_{\ref{lem:DISC_implies_DISC_2}}(\delta'),
  \delta_{\ref{lem:auxZ}}\}\,,
  $$
  where $\delta_{\ref{lem:DISC_implies_DISC_2}}(\delta')$ is given by
  Fact~\ref{lem:DISC_implies_DISC_2}.  Finally, let~$n_0$ be as
  large as required by Lemmas~\ref{lem:m_small} and~\ref{lem:auxZ}.  We claim
  that this choice for~$\delta$ and~$n_0$ will do, and proceed to check
  this claim.

  Suppose~$\DISC(\delta)$ holds for some Cayley graph $G=G(\Gamma,A)$ with
  $|\Gamma|\geq n_0$ and let $\chi\not\equiv 1$ be given (the notation here
  follows the notation set out in Setup~\ref{setup:1}). We consider two cases.
  
  Suppose first that~$m\leq 1/\delta'$.  Fact~\ref{lem:DISC_implies_DISC_2}
  tells us that $\DISC_2(\delta')$ holds for~$G$.  Since~$m\leq 1/\delta'$,
  Lemma~\ref{lem:m_small} tells us that $|\lambda^{(\chi)}|\leq\eps|A|$ by the
  choice of~$\delta'\leq\eps/2$.  For the other case, namely, $m>1/\delta'$,
  we first observe that $\DISC(\delta_{\ref{lem:auxZ}})$ holds since
  $\delta\leq\delta_{\ref{lem:auxZ}}$ and that $m>1/\delta'\geq
  1/\delta_{\ref{lem:auxZ}}$. Moreover, the choice
  of~$\delta'\leq\eta\sigma/3$ yields $m\eta\sigma>\eta\sigma/\delta'\geq 3$,
  making Lemma~\ref{lem:translation} applicable.  Our claim is now a
  consequence of the following implications coming from
  Lemmas~\ref{lem:auxZ}--\ref{lem:auxS}:
  \[ 
  	\DISC(\delta_{\ref{lem:auxZ}})
    \Longrightarrow\ZINTDISC\(\rho;\frac{\eta}{2},\frac{\sigma}{3}\)
    \Longrightarrow\SINTDISC(\chia;\eta,\sigma)\Longrightarrow
    |\lambda^{(\chi)}|\leq\eps|A|\,,
  \]
  and hence Theorem~\ref{thm:main_positive} is proved.
\end{proof}

\subsection{Proof of the auxiliary lemmas}
\label{sec:auxiliaries}
In this section we prove the auxiliary lemmas
Lemmas~\ref{lem:m_small}, \ref{lem:auxZ}, \ref{lem:translation},
and~\ref{lem:auxS}
\subsubsection{An auxiliary weighted graph}
\label{sec:weighted_graph}
The homomorphism~$\rho$ (see Setup~\ref{setup:1} for details), 
defines a weighted graph~$\tG$ on
$\ZZ/m\ZZ$ in a natural way. The symmetry of this graph will be
useful in the proofs of Lemmas~\ref{lem:m_small} and~\ref{lem:auxZ}.

\begin{definition}
  \label{def:weighted_graph}
  We let (under the assumptions of
  Setup~\ref{setup:1})~$\tG=\tG(\rho)=(\ZZ/m\ZZ,w)$ be the \textit{weighted
    graph} on~$\ZZ/m\ZZ$, with weights assigned to the edges and vertices,
  with the \textit{weight function}
  \begin{equation*}
    \label{eq:weighted_graph}
    w\:\binom{\ZZ/m\ZZ}{2}\cup\ZZ/m\ZZ\to\ZZ 
  \end{equation*}
  given by
  \begin{equation}
    \label{eq:def_w_diff}
    w(\{r,s\})=e(G[\rho^{-1}(r),\rho^{-1}(s)]),
  \end{equation}
  for all distinct~$r$ and~$s\in\ZZ/m\ZZ$, and
  \begin{equation}
    \label{eq:def_w_eq}
    w(r)=e(G[\rho^{-1}(r)]),
  \end{equation}
  for all~$r\in\ZZ/m\ZZ$.
\end{definition}

For convenience, if~$X$ and~$Y\subset\ZZ/m\ZZ$ are two disjoint sets, we put
\begin{equation}
  \label{eq:w_between_sets}
  w(X,Y)=\sum\Big\{w(\{x,y\})\:(x,y)\in X\times Y\Big\}
  =e\(\rho^{-1}(X),\rho^{-1}(Y)\)\,.  
\end{equation}
In Lemma~\ref{lem:weighted_graph} below, we make the definition of~$\tG$ more
concrete, computing the values in~\eqref{eq:def_w_diff}
and~\eqref{eq:def_w_eq}.  Let us observe that Lemma~\ref{lem:weighted_graph}
shows that the weighted graph~$\tG$ has a ``cyclic'' structure, that is, the
cyclic permutation $\tau\:s\mapsto s+1$ is an ``automorphism'' of~$\tG$.  

\begin{lemma}
  \label{lem:weighted_graph}
  For all distinct~$r$ and~$s\in\ZZ/m\ZZ$, we have 
  \begin{equation}
    \label{eq:weighted_lemma_diff}
    w(\{r,s\})=e(G[\rho^{-1}(r),\rho^{-1}(s)])
    =\frac{n}{m}\big|A\cap\rho^{-1}(r-s)\big|,
  \end{equation}
  and for all~$r\in\ZZ/m\ZZ$ we have
  \begin{equation}
    \label{eq:weighted_lemma_eq}
    w(r)=e(G[\rho^{-1}(r)])
    =\frac{n}{2m}\big|A\cap\rho^{-1}(0)\big|.
  \end{equation}
\end{lemma}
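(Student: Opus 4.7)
The plan is to exploit the fact that $\rho\colon\Gamma\to\ZZ/m\ZZ$ is a \emph{surjective} group homomorphism, so by Lagrange's theorem every fiber $\rho^{-1}(t)$ has the same cardinality $n/m$. Surjectivity follows directly from Setup~\ref{setup:1}: since $|\im\chi|=m$ and $\chi=\e^{\chia\bfi}$ factors through $\rho$ via the \emph{bijection} $\Omega$, the image of $\rho$ must contain $m$ distinct elements of $\ZZ/m\ZZ$, hence all of $\ZZ/m\ZZ$. This equal-fiber observation is the only structural input we need; the rest is bookkeeping.

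For identity~\eqref{eq:weighted_lemma_diff}, I would parametrise the edges of $G[\rho^{-1}(r),\rho^{-1}(s)]$ by pairs $(\gamma,a)\in\rho^{-1}(r)\times A$ via $(\gamma,a)\mapsto\{\gamma,\gamma+a\}$. Because $\rho$ is a homomorphism, the condition $\gamma+a\in\rho^{-1}(s)$ is equivalent to $a\in A\cap\rho^{-1}(s-r)$. Since $r\neq s$ the two fibers are disjoint, and so each edge of $G[\rho^{-1}(r),\rho^{-1}(s)]$ corresponds to exactly one such pair. Hence
\[
e(G[\rho^{-1}(r),\rho^{-1}(s)])=|\rho^{-1}(r)|\cdot|A\cap\rho^{-1}(s-r)|=\frac{n}{m}\,|A\cap\rho^{-1}(s-r)|.
\]
The symmetry $A=-A$ together with $\rho(-a)=-\rho(a)$ then yields $|A\cap\rho^{-1}(s-r)|=|A\cap\rho^{-1}(r-s)|$, proving~\eqref{eq:weighted_lemma_diff}.

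For identity~\eqref{eq:weighted_lemma_eq}, the same parametrisation works inside a single fiber: an edge of $G[\rho^{-1}(r)]$ corresponds to a pair $(\gamma,a)\in\rho^{-1}(r)\times A$ with $a\in\rho^{-1}(0)$, and the hypothesis $0\notin A$ prevents any loops. The only subtlety is that each \emph{unordered} edge $\{\gamma,\gamma+a\}$ is now counted twice, once via $a$ and once via $-a\in A$ (both of which lie in $A\cap\rho^{-1}(0)$). Dividing by two gives $e(G[\rho^{-1}(r)])=(n/(2m))\,|A\cap\rho^{-1}(0)|$, as claimed.

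The main (very mild) obstacle is simply to argue surjectivity of $\rho$ cleanly and to remember the factor-of-two double count in the intra-fiber case; there is no analytic content whatsoever, and the cyclic symmetry $\tau\colon s\mapsto s+1$ mentioned in the paragraph preceding the lemma then falls out for free, since $w(\{r,s\})$ depends only on the difference $r-s$ and $w(r)$ does not depend on $r$ at all.
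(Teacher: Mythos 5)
Your proposal is correct and follows essentially the same route as the paper's: both proofs rest on the observation that each fiber $\rho^{-1}(\cdot)$ has size $n/m$ and that the neighbours of a vertex $\gamma$ in a prescribed fiber form a translate of $A\cap\rho^{-1}(\cdot)$ by $\gamma$, then count by summing over $\gamma$. The only cosmetic difference is that you anchor the count at $\gamma\in\rho^{-1}(r)$, which produces $|A\cap\rho^{-1}(s-r)|$ and requires a final appeal to $A=-A$, whereas the paper anchors at $\gamma\in\rho^{-1}(s)$ and obtains $|A\cap\rho^{-1}(r-s)|$ directly; and you spell out the surjectivity of $\rho$ (hence equal fiber sizes), which the paper leaves implicit. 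Your handling of the intra-fiber case, including the factor-of-two from the $a\leftrightarrow-a$ double count and the exclusion of loops via $0\notin A$, matches what the paper's ``similarly, from the case $r=s$'' leaves to the reader.
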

\begin{proof}
  Let $r$ and $s$ be
  arbitrary, not necessarily distinct members of $\ZZ/m\ZZ$. 
  For every~$\gamma$
  in $\rho^{-1}(s)$, consider the neighbourhood $N_r(\gamma)$ 
  of~$\gamma$ in~$G$ restricted to $\rho^{-1}(r)$. 
  It is easy to see that 
  $$
  N_r(\gamma)=\{A\cap\rho^{-1}(r-s)\}+\gamma
  $$
  for every $\gamma\in\rho^{-1}(s)$. 
  Since $|\rho^{-1}(s)|=n/m$, this implies, for $s\not=r$, that
  $$
  e(G[\rho^{-1}(r),\rho^{-1}(s)])=|\rho^{-1}(s)|\cdot|A\cap\rho^{-1}(r-s)|
  =\frac{n}{m}|A\cap\rho^{-1}(r-s)|\,,
  $$
  and therefore~\eqref{eq:weighted_lemma_diff} holds.
  Similarly,~\eqref{eq:weighted_lemma_eq} follows from the case $r=s$.
\end{proof}

\subsubsection{The small $m$ case} 
\label{sec:proof_of_m_small}
The proof given in this section is fairly simple. It is based
on~\eqref{eq:weighted_lemma_diff} combined with an
application of $\DISC_2$.

\begin{proof}[Proof of Lemma~\ref{lem:m_small}]
  Let $\delta'>0$ be given and let~$n_0$ large enough such that $n\sim_{\delta'/2}(n-1)$ for
  every~$n\geq n_0$.

  Now assume $G=G(\Gamma,A)$ with $|\Gamma|\geq n_0$ satisfying
  $\DISC_2(\delta')$ is given. Using~$1/m\geq\delta'$, 
  we deduce from~$\DISC_2(\delta')$ and~\eqref{eq:weighted_lemma_diff}
  that for all~$r\in\ZZ/m\ZZ$
  $$
  |A\cap\rho^{-1}(r)|=\frac{m}{n}e\(G[\rho^{-1}(r),\rho^{-1}(0)]\)
  \sim_{\delta'}\frac{n |A|}{(n-1)m}\,,
  $$
  and hence, by the choice of~$n_0$,
  \begin{equation}
    \label{eq:local_even_distribution}
    |A\cap\rho^{-1}(r)|\leq(1+2\delta')\frac{|A|}{m}\,.
  \end{equation}
  We then set $\omega=\e^{2\pi\bfi/m}$ and 
  use~\eqref{eq:local_even_distribution} to infer
  $$
  |\lambda^{(\chi)}|=\Big|\sum_{a\in A}\chi(a)\Big|=
  \Big|\sum_{r=0}^{m-1}\(|A\cap\rho^{-1}(r)|\cdot\omega^r\)\Big|
  \leq\frac{|A|}{m}\(\Big|\sum_{r=0}^{m-1}\omega^r\Big|+2\delta'm\)\,,
  $$
  which yields $|\lambda^{(\chi)}|\leq 2\delta'|A|$, because
  $$
  \sum_{r=0}^{m-1}\omega^r=0\,.
  $$
\end{proof}

\subsubsection{$\DISC$ implies $\ZINTDISC$ for~$m$ large}
\label{sec:proof_of_auxZ}
The aim of this section is to verify $\ZINTDISC(\rho;\eta,\sigma)$ for a graph
that satisfies $\DISC(\delta)$ for sufficiently small~$\delta$.  We therefore
want to link properties of the edge-distribution of~$G$ with the quantities
$$
|A\cap\rho^{-1}(I)|=\sum_{f\in I}|A\cap\rho^{-1}(f)|\,,
$$
where~$I$ is a sufficiently large interval in $\ZZ/m\ZZ$. A first step
towards this goal is the following lemma.  

\begin{lemma}
  \label{lem:weight_between_two_intervals}
  Let~$\ell$, $s$, and~$t$ be integers, and suppose that
  $$
  0\leq s<s+\ell\leq t<t+\ell\leq m/2\,.  
  $$
  Then for~$d_1=t-s-\ell$ and~$d_2=t-s+\ell$  
  \begin{equation} 
    \label{eq:weight_between_two_intervals}
    \begin{split}
      \frac{m}{n}w\big([s,s+\ell),[t,t+\ell)\big)
      &=\sum\left\{\big|A\cap\rho^{-1}(f)\big|(f-d_1)\:d_1<f<t-s\right\}\\
      &\qquad+\sum\left\{\big|A\cap\rho^{-1}(f)\big|(d_2-f)\:t-s\leq
        f<d_2\right\}\,.  
    \end{split}
  \end{equation}
\end{lemma}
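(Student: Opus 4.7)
The proof is essentially a careful bookkeeping exercise, rewriting the double sum defining $w([s,s+\ell),[t,t+\ell))$ in terms of the fibres $A\cap\rho^{-1}(f)$ and counting the multiplicity with which each fibre appears.

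My plan is to begin by expanding the weight between the two intervals as
\[
  w\big([s,s+\ell),[t,t+\ell)\big)
  = \sum_{i=0}^{\ell-1}\sum_{j=0}^{\ell-1} w\big(\{s+i,\,t+j\}\big),
\]
which is legitimate because the hypothesis $s+\ell\leq t$ ensures that $s+i\neq t+j$ for all $i,j\in\{0,\dots,\ell-1\}$, so every term is an honest ``edge weight''. Applying Lemma~\ref{lem:weighted_graph}, specifically \eqref{eq:weighted_lemma_diff}, each summand equals $(n/m)\big|A\cap\rho^{-1}(t-s+j-i)\big|$, so the left-hand side of \eqref{eq:weight_between_two_intervals} becomes
\[
  \sum_{i=0}^{\ell-1}\sum_{j=0}^{\ell-1}\big|A\cap\rho^{-1}(t-s+j-i)\big|.
\]

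Next I would reindex by $f=t-s+j-i$. As $(i,j)$ ranges over $[0,\ell)^2$, the value $f$ ranges over the integers in $[t-s-\ell+1,\,t-s+\ell-1]$, and for each such $f$ the number of pairs $(i,j)$ producing it equals $\ell-|f-(t-s)|$. Thus the double sum becomes
\[
  \sum_{f=d_1+1}^{d_2-1} \big|A\cap\rho^{-1}(f)\big|\,\big(\ell-|f-(t-s)|\big),
\]
with $d_1=t-s-\ell$ and $d_2=t-s+\ell$ as in the statement. Splitting the range at $f=t-s$, for $f<t-s$ we have $\ell-|f-(t-s)|=\ell-(t-s-f)=f-d_1$, and for $f\geq t-s$ we have $\ell-|f-(t-s)|=\ell-(f-(t-s))=d_2-f$, which matches exactly the two sums on the right-hand side of \eqref{eq:weight_between_two_intervals}.

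The only subtle point, and what I would flag as the main thing to verify rather than a true obstacle, is that the labels $f$ of the fibres $\rho^{-1}(f)$ really are distinct elements of $\ZZ/m\ZZ$; for this one uses the upper bound $t+\ell\leq m/2$, which guarantees $1\leq f\leq m-1$ throughout the range $[d_1+1,d_2-1]$, so no two values of $f$ get identified modulo $m$. Once this is checked, the identity follows directly from the reindexing and the tent-function identity, with no further estimates required.
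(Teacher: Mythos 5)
Your proof is correct and takes essentially the same route as the paper: both expand $w\big([s,s+\ell),[t,t+\ell)\big)$ as a double sum of edge weights, apply~\eqref{eq:weighted_lemma_diff}, reindex by the difference $f$, and observe that the number of pairs producing a given $f$ is the tent function $\ell-|f-(t-s)|$, which splits into $f-d_1$ and $d_2-f$ on the two halves; the paper just presents this multiplicity count pictorially via Figure~\ref{fig:3} and the two assertions~\ref{it:AI},~\ref{it:AII}, whereas you write it out algebraically. Your two sanity checks (that $s+\ell\leq t$ keeps the two intervals disjoint, and that $t+\ell\leq m/2$ keeps all the $f$ distinct in $\ZZ/m\ZZ$) are exactly the hypotheses the paper uses implicitly for the same purpose.
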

We later may control the left-hand side
of~\eqref{eq:weight_between_two_intervals} by $\DISC$ (or, more
precisely, by $\DISC_2$). On the other hand, we may interpret the 
right-hand side as a ``weighted version'' of
$|A\cap\rho^{-1}([d_1,d_2])|$    
where the ``multiplicity'' for each $f$ in $[d_1,d_2]$ is given by
a piecewise linear function depending on $d_1$ and $d_2$ (see   
Figure~\ref{fig:1}).  

\begin{figure}[ht] 
  \begin{picture}(0,0)%
    \includegraphics{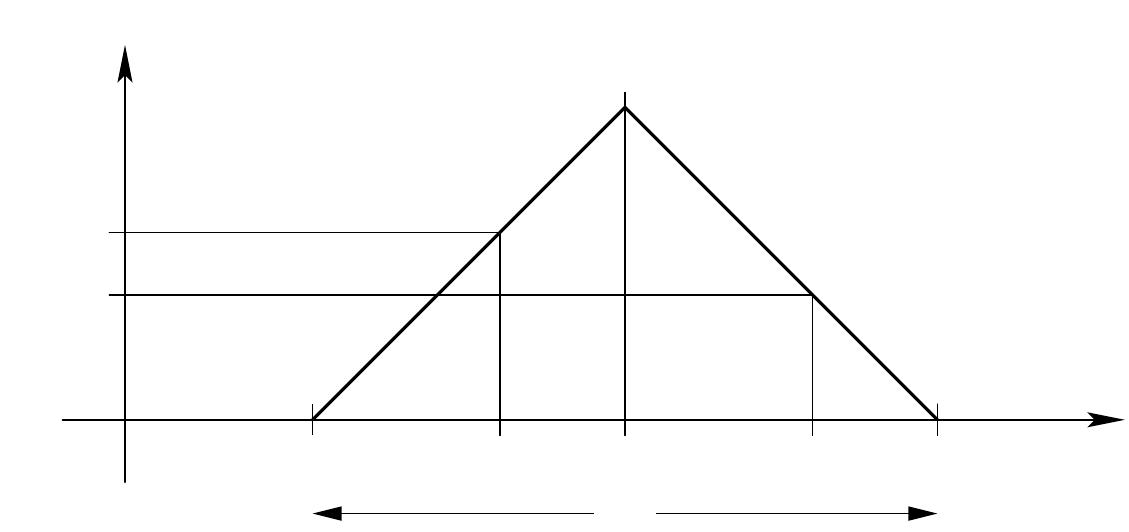}%
  \end{picture}%
  \setlength{\unitlength}{3947sp}%
  \begingroup\makeatletter\ifx\SetFigFont\undefined%
  \gdef\SetFigFont#1#2#3#4#5{%
    \reset@font\fontsize{#1}{#2pt}%
    \fontfamily{#3}\fontseries{#4}\fontshape{#5}%
    \selectfont}%
  \fi\endgroup%
  \begin{picture}(5412,2518)(1,-2996) 
    \put(80,-586){\makebox(0,0)[lb]{\smash{\SetFigFont{9}{10.8}{\familydefault}{\mddefault}{\updefault}{``multiplicities''}}}}
    \put(2930,-2956){\makebox(0,0)[lb]{\smash{\SetFigFont{9}{10.8}{\familydefault}{\mddefault}{\updefault}{$2\ell$}}}}
    \put(60,-1600){\makebox(0,0)[lb]{\smash{\SetFigFont{9}{10.8}{\familydefault}{\mddefault}{\updefault}{$f-d_1$}}}}
    \put(4,-1900){\makebox(0,0)[lb]{\smash{\SetFigFont{9}{10.8}{\familydefault}{\mddefault}{\updefault}{$d_2-f'$}}}}
    \put(2345,-2686){\makebox(0,0)[lb]{\smash{\SetFigFont{9}{10.8}{\familydefault}{\mddefault}{\updefault}{$f$}}}}
    \put(3850,-2686){\makebox(0,0)[lb]{\smash{\SetFigFont{9}{10.8}{\familydefault}{\mddefault}{\updefault}{$f'$}}}}
    \put(2850,-2686){\makebox(0,0)[lb]{\smash{\SetFigFont{9}{10.8}{\familydefault}{\mddefault}{\updefault}{$t-s$}}}}
    \put(1440,-2686){\makebox(0,0)[lb]{\smash{\SetFigFont{9}{10.8}{\familydefault}{\mddefault}{\updefault}{$d_1$}}}}
    \put(4440,-2686){\makebox(0,0)[lb]{\smash{\SetFigFont{9}{10.8}{\familydefault}{\mddefault}{\updefault}{$d_2$}}}}
  \end{picture}
  \caption{Distribution of ``multiplicities''}\label{fig:1}   
\end{figure}
  
\begin{proof}[Proof of Lemma~\ref{lem:weight_between_two_intervals}]
  We have 
  \begin{equation} 
    \label{eq:pf_weight_between_two_intervals}  
    w\big([s,s+\ell),[t,t+\ell)\big)
    =\sum\Big\{w(e)\:e\in E_{\tG}\big([s,s+\ell),[t,t+\ell)\big)\Big\}.
  \end{equation}
  The integers~$f$ that arise as differences~$t'-s'$ with~$t'\in[t,t+\ell)$
  and~$s'\in[s,s+\ell)$ are in the interval 
  \begin{equation}
    \label{eq:f_interval}
    d_1=t-s-\ell< f< t-s+\ell=d_2.
  \end{equation}
  Intuitively speaking, these are the ``lengths'' of the edges
  in~$E_{\tG}\big([s,s+\ell),[t,t+\ell)\big)$.  A moment's thought
  (see Figure~\ref{fig:3}) shows that
  assertions~\ref{it:AI} and~\ref{it:AII} given below hold.

  \begin{enumerate}[label=\RMlabel]
  \item\label{it:AI} If~$f$ is in the interval
      $d_1=t-s-\ell< f<t-s$,  
    then~$f-d_1$ edges in the graph~$E_{\tG}\big([s,s+\ell),[t,t+\ell)\big)$ have
    length~$f$. 
  \item\label{it:AII} If~$f$ is in the interval 
      $t-s\leq f<d_2=t-s+\ell$,  
    then~$d_2-f$ edges in the graph~$E_{\tG}\big([s,s+\ell),[t,t+\ell)\big)$ have
    length~$f$.
  \end{enumerate}

  \begin{figure}[ht]
    \begin{picture}(0,0)%
      \includegraphics{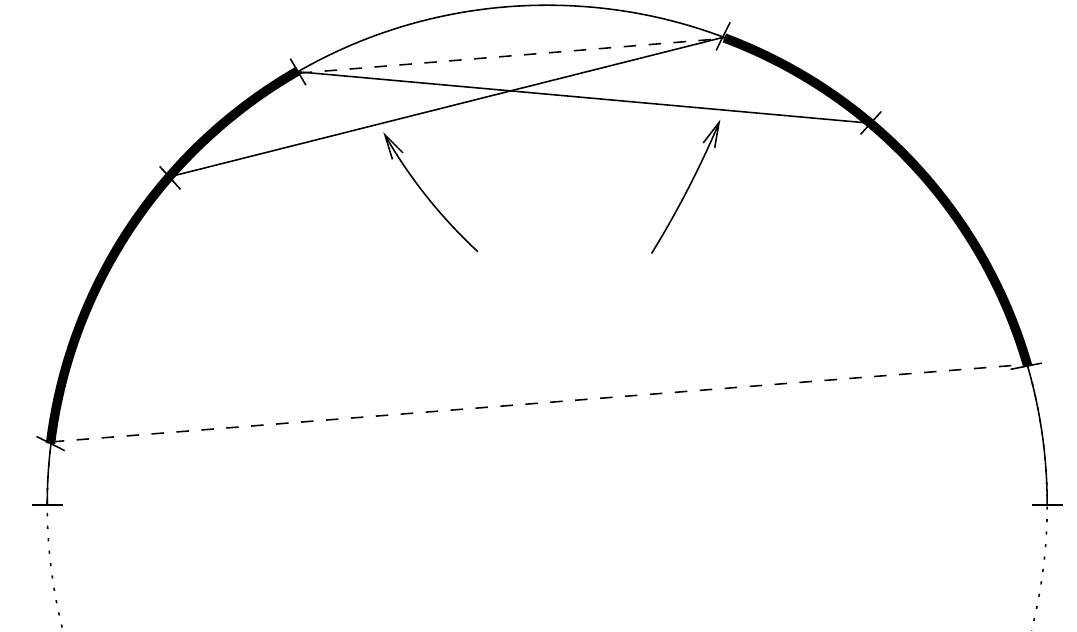}%
    \end{picture}%
    \setlength{\unitlength}{3947sp}%
    \begingroup\makeatletter\ifx\SetFigFont\undefined%
    \gdef\SetFigFont#1#2#3#4#5{%
      \reset@font\fontsize{#1}{#2pt}%
      \fontfamily{#3}\fontseries{#4}\fontshape{#5}%
      \selectfont}%
    \fi\endgroup%
    \begin{picture}(5160,3017)(375,-3069)
      \put(380,-2500){\makebox(0,0)[lb]{\smash{\SetFigFont{10}{12.0}{\familydefault}{\mddefault}{\updefault}{$0$}}}}
      \put(410,-2120){\makebox(0,0)[lb]{\smash{\SetFigFont{10}{12.0}{\familydefault}{\mddefault}{\updefault}{$s$}}}}
      \put(5450,-1780){\makebox(0,0)[lb]{\smash{\SetFigFont{10}{12.0}{\familydefault}{\mddefault}{\updefault}{$t+\ell$}}}}
      \put(1360,-300){\makebox(0,0)[lb]{\smash{\SetFigFont{10}{12.0}{\familydefault}{\mddefault}{\updefault}{$s+\ell$}}}}
      \put(3950,-140){\makebox(0,0)[lb]{\smash{\SetFigFont{10}{12.0}{\familydefault}{\mddefault}{\updefault}{$t$}}}}
      \put(2800,-210){\makebox(0,0)[lb]{\smash{\SetFigFont{10}{12.0}{\familydefault}{\mddefault}{\updefault}{$d_1$}}}}
      \put(2860,-1880){\makebox(0,0)[lb]{\smash{\SetFigFont{10}{12.0}{\familydefault}{\mddefault}{\updefault}{$d_2$}}}}
      \put(5520,-2500){\makebox(0,0)[lb]{\smash{\SetFigFont{10}{12.0}{\familydefault}{\mddefault}{\updefault}{$\lfloor m/2\rfloor$}}}}
      \put(760,-849){\makebox(0,0)[lb]{\smash{\SetFigFont{10}{12.0}{\familydefault}{\mddefault}{\updefault}{$t-f$}}}}
      \put(4650,-565){\makebox(0,0)[lb]{\smash{\SetFigFont{10}{12.0}{\familydefault}{\mddefault}{\updefault}{$s+\ell+f$}}}}
      \put(2400,-1401){\makebox(0,0)[lb]{\smash{\SetFigFont{10}{12.0}{\familydefault}{\mddefault}{\updefault}{edges of ``length'' $f$}}}}
    \end{picture}
    \caption{Edges in $\tG$ of ``length'' $d_1<f<t-s$ appear between $[t-f,s+\ell)$
      and $[t,s+\ell+f)$}\label{fig:3} 
  \end{figure}

  In other words, the lengths~$f$ in the interval~$(d_1,t-s)$
  occur~$f-d_1$, times and the lengths~$f$ in the
  interval~$[t-s,d_2)$ occur~$d_2-f$ times in the sum
  in~\eqref{eq:pf_weight_between_two_intervals}.  Each occurrence of~$f$
  contributes to~\eqref{eq:pf_weight_between_two_intervals} a weight of
  \begin{equation} 
    \label{eq:each_contribution}
    \frac{n}{m}|A\cap\rho^{-1}(f)|
  \end{equation} 
  (see~\eqref{eq:weighted_lemma_diff}).  Therefore, putting~\ref{it:AI},~\ref{it:AII},
  and~\eqref{eq:each_contribution} together,
  identity~\eqref{eq:weight_between_two_intervals} follows.  
\end{proof}

The next step towards verifying $\ZINTDISC$ is to dispose of the
``multiplicities'' of type $d_1-f$ and $d_2-f$
in~\eqref{eq:weight_between_two_intervals}.  For this we use
Lemma~\ref{lem:weight_between_two_intervals} and we
compare the quantities~$w\big([s-d,s+\ell),[t-d,t+\ell)\big)$ with
$w\big([s,s+\ell),[t,t+\ell)\big)$.
By~\eqref{eq:weight_between_two_intervals} these two terms (appropriately
scaled) correspond to two ``weighted versions'' of
$|A\cap\rho^{-1}([d_1',d_2'])|$ and $|A\cap\rho^{-1}([d_1,d_2])|$, for some
appropriate $d_1'$, $d_2'$, $d_1$, and $d_2$ depending on $d$, $s$, $t$, and
$\ell$.  As it turns out, the difference between these two ``weighted
versions'' yields a ``weighted version'' of~$|A\cap\rho^{-1}([d_1',d_2'])|$
with constant multiplicity~$d$ for the main part of the interval
$[d_1',d_2']$, i.e., in between $d_1$ and $d_2$ (see Figure~\ref{fig:2}). This
way we derive a useful estimate for $|A\cap\rho^{-1}([d_1,d_2])|$.
   
\begin{figure}[ht]
  \begin{picture}(0,0)%
    \includegraphics{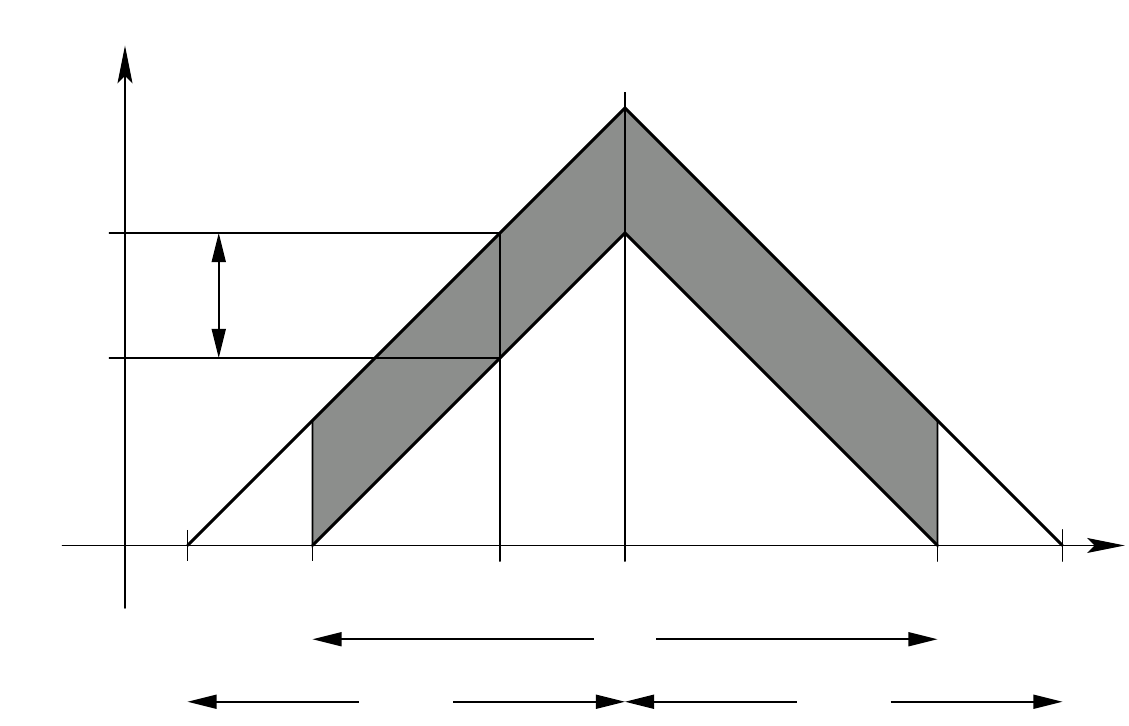}%
  \end{picture}%
  \setlength{\unitlength}{3947sp}%
  \begingroup\makeatletter\ifx\SetFigFont\undefined%
  \gdef\SetFigFont#1#2#3#4#5{%
    \reset@font\fontsize{#1}{#2pt}%
    \fontfamily{#3}\fontseries{#4}\fontshape{#5}%
    \selectfont}%
  \fi\endgroup%
  \begin{picture}(5412,3448)(1,-3326)
    \put(4440,-2686){\makebox(0,0)[lb]{\smash{\SetFigFont{9}{10.8}{\familydefault}{\mddefault}{\updefault}{$d_2$}}}}
    \put(1440,-2686){\makebox(0,0)[lb]{\smash{\SetFigFont{9}{10.8}{\familydefault}{\mddefault}{\updefault}{$d_1$}}}}
    \put(2850,-2686){\makebox(0,0)[lb]{\smash{\SetFigFont{9}{10.8}{\familydefault}{\mddefault}{\updefault}{$t-s$}}}}
    \put(846,-2686){\makebox(0,0)[lb]{\smash{\SetFigFont{9}{10.8}{\familydefault}{\mddefault}{\updefault}{$d_1'$}}}}
    \put(5045,-2686){\makebox(0,0)[lb]{\smash{\SetFigFont{9}{10.8}{\familydefault}{\mddefault}{\updefault}{$d_2'$}}}}
    \put(2345,-2686){\makebox(0,0)[lb]{\smash{\SetFigFont{9}{10.8}{\familydefault}{\mddefault}{\updefault}{$f$}}}}
    \put(  60,-1600){\makebox(0,0)[lb]{\smash{\SetFigFont{9}{10.8}{\familydefault}{\mddefault}{\updefault}{$f-d_1$}}}}
    \put(  60,-1008){\makebox(0,0)[lb]{\smash{\SetFigFont{9}{10.8}{\familydefault}{\mddefault}{\updefault}{$f-d_1'$}}}}
    \put(2930,-2956){\makebox(0,0)[lb]{\smash{\SetFigFont{9}{10.8}{\familydefault}{\mddefault}{\updefault}{$2\ell$}}}}
    \put(1790,-3243){\makebox(0,0)[lb]{\smash{\SetFigFont{9}{10.8}{\familydefault}{\mddefault}{\updefault}{$\ell+d$}}}}
    \put(3895,-3243){\makebox(0,0)[lb]{\smash{\SetFigFont{9}{10.8}{\familydefault}{\mddefault}{\updefault}{$\ell+d$}}}}
    \put( 80, 14){\makebox(0,0)[lb]{\smash{\SetFigFont{9}{10.8}{\familydefault}{\mddefault}{\updefault}{``multiplicities''}}}}
    \put(1115,-1320){\makebox(0,0)[lb]{\smash{\SetFigFont{9}{10.8}{\familydefault}{\mddefault}{\updefault}{$d$}}}}
  \end{picture}
  \caption{Difference between ``multiplicities''}\label{fig:2}   
\end{figure}
  
\begin{lemma}
  \label{lem:distribution_of_A}
  Let~$d$, $\ell$, $s$, and~$t$ be positive integers, $\delta'$ a 
  real number such that $0<\delta'\leq 1$, and suppose that
  \begin{enumerate}[label=\rmlabel]
  \item $0\leq s-d<s+\ell\leq t-d<t+\ell\leq  
    m/2$,
  \item $\DISC_2(\delta')$ holds 
    for~$G=G(\Gamma,A)$,  
  \item $\ell-d\geq\delta' m$, and  
  \item $n\sim_{\delta'/2} (n-1)$.    
  \end{enumerate} 
  Then, for $d_1=t-s-\ell$ and $d_2=t-s+\ell$, we have     
  \begin{equation} 
    \label{eq:distribution_of_A} 
    \left||A\cap\rho^{-1}\big([d_1,d_2)\big)|-2\ell\frac{|A|}{m}\right|
    \leq\frac{|A|}{m}\(d+\frac{2\delta'}{d}\((\ell+d)^2+\ell^2\)\)\,. 
  \end{equation} 
\end{lemma}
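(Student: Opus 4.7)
The plan is to sandwich the quantity
\[
N := \bigl|A\cap\rho^{-1}([d_1,d_2))\bigr|
\]
between two weighted edge-count differences, each of which can be evaluated by $\DISC_2(\delta')$. To this end I introduce three weights in the auxiliary graph~$\tG$ of Definition~\ref{def:weighted_graph}:
\begin{align*}
W_1 &= w\bigl([s,s+\ell),\,[t,t+\ell)\bigr),\\
W_2 &= w\bigl([s-d,s+\ell),\,[t-d,t+\ell)\bigr),\\
W_3 &= w\bigl([s+d,s+\ell),\,[t+d,t+\ell)\bigr),
\end{align*}
corresponding to the original pair of intervals, a widened pair, and a shrunk pair. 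Hypothesis~(i) ensures that the two intervals defining each $W_i$ are disjoint and lie in $[0,m/2]$, while hypothesis~(iii) guarantees that every interval in sight has length at least $\ell-d\geq\delta'm$. Consequently, all six preimages $\rho^{-1}(\cdot)\subset\Gamma$ have size at least $\delta'n$ and $\DISC_2(\delta')$ applies to each of the three pairs.

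The first step will be a purely combinatorial identity. Applying Lemma~\ref{lem:weight_between_two_intervals} to each $W_i$ and subtracting the resulting triangular coefficient functions pairwise, as illustrated in Figure~\ref{fig:2}, yields, with $M(f):=|A\cap\rho^{-1}(f)|$,
\begin{align*}
\frac{m}{n}(W_2-W_1) &= d\sum_{f=d_1}^{d_2}M(f)+\sum_{k=1}^{d-1}(d-k)\bigl[M(d_1-k)+M(d_2+k)\bigr],\\
\frac{m}{n}(W_1-W_3) &= d\sum_{f=d_1+d}^{d_2-d}M(f)+\sum_{k=1}^{d-1}k\bigl[M(d_1+k)+M(d_2-k)\bigr].
\end{align*}
Since all shoulder sums are non-negative, the rewrites $\sum_{f=d_1}^{d_2}M(f)=N+M(d_2)$ in the first identity and
\[
N-\sum_{f=d_1+d}^{d_2-d}M(f)=M(d_1)+\sum_{k=1}^{d-1}\bigl[M(d_1+k)+M(d_2-k)\bigr]
\]
in the second identity, together with a short rearrangement grouping the weights $k/d$ and $(d-k)/d$, will produce the two-sided sandwich
\[
\frac{m}{n}\cdot\frac{W_1-W_3}{d}\;\leq\; N\;\leq\;\frac{m}{n}\cdot\frac{W_2-W_1}{d}.
\]

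The second step is the analytic one. Applying $\DISC_2(\delta')$ to each $W_i$, together with the density identity~\eqref{eq:density_G} and hypothesis~(iv), gives
\[
W_i=\frac{|A|\,n\,L_i^2}{m^2}\bigl(1+O_1(2\delta')\bigr),\qquad L_i\in\{\ell,\ell+d,\ell-d\},
\]
so that
\[
\frac{m}{n}\cdot\frac{W_2-W_1}{d}=\frac{|A|}{m}(2\ell+d)+O_1\!\Bigl(\frac{2\delta'\,|A|}{md}\bigl((\ell+d)^2+\ell^2\bigr)\Bigr),
\]
and analogously $(m/n)(W_1-W_3)/d=(|A|/m)(2\ell-d)+O_1((2\delta'|A|/(md))(\ell^2+(\ell-d)^2))$. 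Plugging these into the sandwich, subtracting $2\ell|A|/m$, and using the elementary inequality $(\ell-d)^2\leq(\ell+d)^2$ to absorb the two one-sided errors into a common symmetric bound will yield the conclusion of the lemma.

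The main obstacle I foresee lies in establishing the lower bound on $N$. Using only the pair $(W_1,W_2)$ gives the upper bound on $N$ immediately, but the shoulder terms $M(d_1-k),M(d_2+k)$ in the first identity are a priori uncontrolled, and a trivial pointwise bound such as $M(f)\leq n/m$ is much too weak in the sparse regime $|A|\ll n$. It is precisely the hypothesis $\ell-d\geq\delta'm$ in~(iii) that makes the narrower weight $W_3$ a legitimate comparison object; the non-negative rearrangement of the $(W_1,W_3)$ identity then produces the matching lower bound on $N$ without ever having to estimate the individual shoulder values of~$M$.
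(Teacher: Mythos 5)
Your proposal is correct and follows essentially the same route as the paper: you compare $W_1$ with the widened pair $W_2$ to get the upper bound on $N$ and with the shrunk pair $W_3$ to get the lower bound, each time using non-negativity of the ``shoulder'' terms in the identity from Lemma~\ref{lem:weight_between_two_intervals} and then estimating the three weights via $\DISC_2(\delta')$ together with hypothesis~(iv). This is exactly the argument in the paper (cf.\ equations~\eqref{eq:diff_of_weights}, \eqref{eq:ultimate_up_bd}, \eqref{eq:ultimate_lw_bd}), merely organised as an explicit two-sided sandwich.
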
   
\begin{proof}
  Let $d_1'=(t-d)-s-\ell=d_1-d$ and $d_2'=t+\ell-(s-d)=d_2+d$.  Applying
  Lemma~\ref{lem:weight_between_two_intervals}
  to~$w\big([s-d,s+\ell),[t-d,t+\ell)\big)$, we get that
  \begin{multline*}
    \label{eq:weight_between_outer_intervals}
    \frac{m}{n}w\big([s-d,s+\ell),[t-d,t+\ell)\big)\\
    =\sum\left\{\big|A\cap\rho^{-1}(f)\big|(f-d_1')\:
      d_1'< f<t-s\right\}\\
    +\sum\left\{\big|A\cap\rho^{-1}(f)\big|(d_2'-f)\:
      t-s\leq f<d_2'\right\}.  
  \end{multline*}
  We then apply Lemma~\ref{lem:weight_between_two_intervals} again, now
  to~$w\big([s,s+\ell),[t,t+\ell)\big)$, and observe that
  \begin{equation}
    \label{eq:diff_of_weights}
    \begin{split}
      &\frac{m}{n}\Big(w\big([s-d,s+\ell),[t-d,t+\ell)\big)
      -w\big([s,s+\ell),[t,t+\ell)\big)\Big)\\
      &\qquad=d\sum\left\{\big|A\cap\rho^{-1}(f)\big|\:d_1\leq f<d_2\right\}\\
      &\qquad\qquad+\sum\left\{\big|A\cap\rho^{-1}(f)\big|(f-d_1')\:
        d_1'< f< d_1\right\}\\
      &\qquad\qquad+\sum\left\{\big|A\cap\rho^{-1}(f)\big|(d_2'-f)\:
        d_2\leq f<d_2'\right\}.
    \end{split}
  \end{equation}
  The ``main term'' on the right-hand side of~\eqref{eq:diff_of_weights} will
  turn out to be
  \begin{equation}
    \label{eq:main_term}
    d\sum\left\{\big|A\cap\rho^{-1}(f)\big|\:d_1\leq f<d_2\right\}
    =d\big|A\cap\rho^{-1}\big([d_1,d_2)\big)\big|. 
  \end{equation}
  
  We now use~$\DISC_2(\delta')$ to estimate the left-hand side
  of~\eqref{eq:diff_of_weights}. By the definition of~$w$ (see
  Definition~\ref{def:weighted_graph}), 
  we have
  \begin{equation*}
    \label{eq:weight_as_no_edges}
    w([s,s+\ell),[t,t+\ell))=e\(G[\rho^{-1}([s,s+\ell)),\rho^{-1}([t,t+\ell))]\).
  \end{equation*}
  Therefore, by~$\DISC_2(\delta')$, using that
  \begin{equation*}
    \label{eq:size_lw_bds}
    \big|\rho^{-1}\big([s,s+\ell)\big)\big|
    =\big|\rho^{-1}\big([t,t+\ell)\big)\big|
    =\frac{n}{m}\ell\geq\delta' n,
  \end{equation*}
  we have that
  \begin{align}
    \label{eq:first_w_estimate}
      w\big([s,s+\ell),[t,t+\ell)\big)
      &=e\(G[\rho^{-1}([s,s+\ell)\big),\rho^{-1}\big([t,t+\ell))]\)\\
      &\sim_{\delta'}\frac{|A|}{n-1}\Big|\rho^{-1}\big([s,s+\ell)\big)\Big|
      \Big|\rho^{-1}\big([t,t+\ell)\big)\Big|
      =\frac{|A|}{n-1}\(\frac{n}{m}\ell\)^2\,.\nonumber
  \end{align}
  Similarly, we have that 
  \begin{equation}
    \label{eq:second_w_estimate}
    w\big([s-d,s+\ell),[t-d,t+\ell)\big)
    \sim_{\delta'}\frac{|A|}{n-1}\(\frac{n}{m}(\ell+d)\)^2.
  \end{equation}
  From~\eqref{eq:first_w_estimate},~\eqref{eq:second_w_estimate}, 
  and~\textrm{(\textit{iv})} we deduce
  that the left-hand side of~\eqref{eq:diff_of_weights} satisfies
  \begin{multline}\label{eq:LHS_estimate}
    \frac{m}{n}\Big(w\big([s-d,s+\ell),[t-d,t+\ell)\big)
    -w\big([s,s+\ell),[t,t+\ell)\big)\Big)\\
    =(1+O_1(2\delta'))\frac{|A|}{m}(\ell+d)^2
    -(1+O_1(2\delta'))\frac{|A|}{m}\ell^2 \\
    =\frac{|A|}{m}
    \(2\ell d+d^2+O_1(2\delta')\((\ell+d)^2+\ell^2\)\).
  \end{multline}
  Therefore, replacing the left-hand side
  of~\eqref{eq:diff_of_weights} by~\eqref{eq:LHS_estimate} and 
  using~\eqref{eq:main_term} immediately yields
  \begin{equation}
    \label{eq:still_id}
    \begin{split}
      &d\big|A\cap\rho^{-1}\big([d_1,d_2)\big)\big|\\
      &\qquad\qquad+\sum\left\{\big|A\cap\rho^{-1}(f)\big|(f-d_1')\:
        d_1'< f<d_1\right\}\\
      &\qquad\qquad+\sum\left\{\big|A\cap\rho^{-1}(f)\big|(d_2'-f)\:
        d_2\leq f<d_2'\right\}\\
      &\qquad\qquad\qquad\qquad
      =\frac{|A|}{m}\(2\ell d+d^2+O_1(2\delta')\((\ell+d)^2+\ell^2\)\). 
    \end{split}
  \end{equation}
  Clearly, \eqref{eq:still_id}~implies that 
  \begin{equation}
    \label{eq:ultimate_up_bd}
    d\big|A\cap\rho^{-1}\big([d_1,d_2)\big)\big|
    \leq\frac{|A|}{m}\(2\ell d+d^2+O_1(2\delta')\((\ell+d)^2+\ell^2\)\). 
  \end{equation}
  Moreover, we observe that
  \begin{align*}
    d\big|A\cap\rho^{-1}\big([d_1,d_2)\big)\big|&\geq
    d\big|A\cap\rho^{-1}\big([d_1+d,d_2-d)\big)\big|\\
    &\qquad+\sum\left\{\big|A\cap\rho^{-1}(f)\big|(f-d_1)\:
      d_1\leq f<d_1+d\right\}\\
    &\qquad+\sum\left\{\big|A\cap\rho^{-1}(f)\big|(d_2-f)\:
      d_2-d\leq f<d_2\right\}\\
    &=\frac{m}{n}\Big(w\big([s,s+\ell),[t,t+\ell)\big)
    -w\big([s+d,s+\ell),[t+d,t+\ell)\big)\Big)\,,
  \end{align*}
  where the last identity follows from
  Lemma~\ref{lem:weight_between_two_intervals} in the same way that
  equation~\eqref{eq:diff_of_weights} follows from that lemma.  Then
  essentially the same calculations as in~\eqref{eq:LHS_estimate} give
  \begin{multline*}
    \frac{m}{n}\Big(w\big([s,s+\ell),[t,t+\ell)\big)
    -w\big([s+d,s+\ell),[t+d,t+\ell)\big)\Big)\\
    =\frac{|A|}{m}\(2(\ell-d)d+d^2+O_1(2\delta')\(\ell^2+(\ell-d)^2\)\), 
  \end{multline*}
  and hence
  \begin{equation}
    \label{eq:ultimate_lw_bd}
    d\big|A\cap\rho^{-1}\big([d_1,d_2)\big)\big|
    \geq
    \frac{|A|}{m}\(2\ell d-d^2+O_1(2\delta')\(\ell^2+(\ell-d)^2\)\).
  \end{equation}
  Inequality~\eqref{eq:distribution_of_A} follows
  from~\eqref{eq:ultimate_up_bd} and~\eqref{eq:ultimate_lw_bd}, and thus
  Lemma~\ref{lem:distribution_of_A} is proved.
\end{proof}

We prove a simple corollary of Lemma~\ref{lem:distribution_of_A} that allows
us to rewrite the conditions of Lemma~\ref{lem:distribution_of_A} in terms of
$d_1$ and $d_2$. Moreover, the hypotheses of Lemma~\ref{lem:distribution_of_A}
imply that $d_2-d_1=2\ell$ is even. The following corollary overcomes this
shortcoming.

\begin{corollary}
  \label{lem:rewriting}
  Let~$d$, $d_1$, and $d_2$ be positive integers,~$\delta'$ a real number such
  that $0<\delta'\leq 1$, and suppose that
  \begin{flalign*}
    \quad {\rm(\textit{i\/})}   &
    \quad 0< d\leq d_1< d_2-1<d_2+1\leq \frac12m-d\,,&&\\
    \quad {\rm(\textit{ii\/})}  &
    \quad \DISC_2(\delta')\ \text{holds for}\ G=G(\Gamma,A)\,,&&\\
    \quad {\rm(\textit{iii\/})} &
    \quad \frac12(d_2-d_1-1)-d\geq\delta' m\,,&&\\
    \quad {\rm(\textit{iv\/})}  &
    \quad n\sim_{\delta'/2} (n-1)\,.
  \end{flalign*}
  Then 
  \begin{equation}\label{eq:rewrite}
    \left||A\cap\rho^{-1}\big([d_1,d_2)\big)|-\frac{d_2-d_1}{m}|A|\right|
    \leq\frac{|A|}{m}\(d+1+\frac{\delta'}{d}\cdot\frac{m^2}{4}\)\,.
  \end{equation}
\end{corollary}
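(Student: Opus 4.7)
The plan is to reduce Corollary~\ref{lem:rewriting} to Lemma~\ref{lem:distribution_of_A} by a good choice of the parameters $\ell,s,t$, together with a parity workaround, since Lemma~\ref{lem:distribution_of_A} only handles intervals of even length $2\ell$. In all applications I set $s=d$ and $t=s+\ell+d_1=d+\ell+d_1$, which automatically yields $t-s-\ell=d_1$; with $2\ell$ being the length of the interval to which the Lemma is applied, the remaining hypotheses reduce to easy consequences of the corollary's hypotheses~(i)--(iv).

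When $d_2-d_1$ is \emph{even}, take $2\ell=d_2-d_1$. Then condition~(i) of the Lemma amounts to $0<\ell$, $d\leq d_1$ (from~(i) of the corollary) and $t+\ell=d+d_2\leq m/2-1$ (again from~(i)); condition~(iii) follows from $\ell-d=(d_2-d_1)/2-d\geq(d_2-d_1-1)/2-d\geq\delta'm$; and (ii),~(iv) are inherited. The Lemma then gives an error bound of $\frac{|A|}{m}\bigl(d+\frac{2\delta'}{d}((\ell+d)^2+\ell^2)\bigr)$. Here I use the key numerical observation: $s\geq d$ and $t\geq s+\ell+d$ together with $t+\ell\leq m/2$ force $2(\ell+d)\leq t+\ell\leq m/2$, so $\ell+d\leq m/4$ and hence $(\ell+d)^2+\ell^2\leq 2(m/4)^2=m^2/8$. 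Substituting this gives an error of at most $\frac{|A|}{m}\bigl(d+\frac{\delta'm^2}{4d}\bigr)$, which is majorised by the right-hand side of~\eqref{eq:rewrite}.

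When $d_2-d_1$ is \emph{odd}, I apply the Lemma twice, to the intervals $[d_1,d_2-1)$ with $2\ell=d_2-d_1-1$, and to $[d_1,d_2+1)$ with $2\ell=d_2-d_1+1$. Condition~(i) of the corollary, stated as $d_2+1\leq m/2-d$, is precisely what is needed to accommodate the larger interval $[d_1,d_2+1)$; the other hypotheses are verified exactly as above (the sharper $\ell+d\leq m/4$ bound also holds in both applications). From the monotonicity sandwich
\[
|A\cap\rho^{-1}([d_1,d_2-1))|\leq|A\cap\rho^{-1}([d_1,d_2))|\leq|A\cap\rho^{-1}([d_1,d_2+1))|,
\]
the main terms $(d_2\pm 1-d_1)|A|/m$ differ from $(d_2-d_1)|A|/m$ by exactly $|A|/m$, which combines with the Lemma's error to produce the extra ``$+1$'' term in~\eqref{eq:rewrite}. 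The main obstacle is bookkeeping: one must both verify every link in the chain $0\leq s-d<s+\ell\leq t-d<t+\ell\leq m/2$ required by Lemma~\ref{lem:distribution_of_A}, and be careful enough in bounding $(\ell+d)^2+\ell^2$ to promote the Lemma's $\delta'm^2/d$ into the corollary's $\delta'm^2/(4d)$.
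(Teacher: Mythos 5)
Your proposal is correct and follows essentially the same route as the paper: you choose $s=d$, $t=s+\ell+d_1$, and $2\ell$ equal to the interval length, apply Lemma~\ref{lem:distribution_of_A} directly in the even case, and handle odd $d_2-d_1$ by sandwiching with $d_2\pm1$, exactly as in the paper. The only cosmetic difference is how you derive $\ell+d\leq m/4$ (via $2(\ell+d)\leq t+\ell\leq m/2$ rather than the paper's expansion of $\ell+d=\tfrac12((d_2+d)+(d-d_1))$), but both yield the identical bound $(\ell+d)^2+\ell^2\leq m^2/8$.
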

\begin{proof}
  We distinguish two cases depending on the parity of $d_2-d_1$. We later
  reduce the second case ($d_2-d_1$ odd) to the first case ($d_2-d_1$ even).
  In order to be prepared for this we are going to show a stronger statement
  for the first case.

  \begin{case}[$d_2-d_1$ is even]
    Let us consider the following weaker conditions 
    $\textrm{(\textit{i\/}}'\textrm{)}$ and
    $\textrm{(\textit{iii\/}}'\textrm{)}$ 
    instead of \textrm{(\textit{i})} and \textrm{(\textit{iii})}:
    \begin{flalign*}
      \quad \textrm{(\textit{i\/}}'\textrm{)}  &
      \quad 0< d\leq d_1< d_2\leq \frac{m}{2}-d,&&\\
      \quad \textrm{(\textit{iii\/}}'\textrm{)} &
      \quad \frac12(d_2-d_1)-d\geq\delta' m\,. 
    \end{flalign*}
    We are now going to show a stronger conclusion
    than~\eqref{eq:rewrite} under 
    these weaker assumptions. In particular, we shall verify
    \begin{equation}\label{eq:rewrite2}
      \left||A\cap\rho^{-1}\big([d_1,d_2)\big)|-\frac{d_2-d_1}{m}|A|\right|
      \leq\frac{|A|}{m}\(d+\frac{\delta'}{d}\cdot\frac{m^2}{4}\)\,.
    \end{equation}
    For this we want to apply Lemma~\ref{lem:distribution_of_A} for the
    ``right'' choice of~$s$, $t$, and $\ell$. First, note
    that~\textrm{(\textit{ii})} and~\textrm{(\textit{iv})} are the same in
    Lemma~\ref{lem:distribution_of_A} and Corollary~\ref{lem:rewriting}.
    We set
    \begin{gather*}
      s=d\,,\quad \ell=\frac12(d_2-d_1)\,,\quad \text{and}\quad t=s+\ell+d_1\,.
    \end{gather*}
    Simple calculations using 
    $\textrm{(\textit{i\/}}'\textrm{)}$ and
    $\textrm{(\textit{iii\/}}'\textrm{)}$ show that
    $$
    0=s-d<s+\ell\leq t-d<t+\ell\leq \frac12m\quad \text{and}\quad
    \ell-d\geq\delta' m,
    $$
    and hence~\textrm{(\textit{i})} and~\textrm{(\textit{iii})} of
    Lemma~\ref{lem:distribution_of_A} hold for this particular choice
    of~$s$,~$t$, and~$\ell$. Moreover, $t-s-\ell=d_1$ and $t-s+\ell=d_2$, thus
    Lemma~\ref{lem:distribution_of_A} implies
    $$
     \left||A\cap\rho^{-1}\big([d_1,d_2)\big)|-\frac{2\ell}{m}|A|\right|
    \leq\frac{|A|}{m}\(d+\frac{2\delta'}{d}\((\ell+d)^2+\ell^2\)\)\,,
    $$
    which, combined with 
    $$
    \ell^2\leq(\ell+d)^2=\frac{(d_2-d_1+2d)^2}{4}=
    \frac{\big((d_2+d)+(d-d_1)\big)^2}{4}\leq
    \frac{(m/2)^2}{4}=\frac{m^2}{16},
    $$
    gives inequality~\eqref{eq:rewrite2}.
  \end{case}

  \begin{case}[$d_2-d_1$ is odd] 
    The hypotheses of Lemma~\ref{lem:distribution_of_A}
    unfortunately always imply $\ell=d_2-d_1$ is even. In order to 
    get a bound for intervals $[d_1,d_2)$ of odd length we ``sandwich''
    $|A\cap\rho^{-1}([d_1,d_2))|$ as follows:
    \begin{equation}\label{eq:sandwich}
      |A\cap\rho^{-1}([d_1,d_2-1))|\leq
      |A\cap\rho^{-1}([d_1,d_2))|\leq 
      |A\cap\rho^{-1}([d_1,d_2+1))|\,.
    \end{equation}
    Now we apply Case~1 twice to derive the upper and lower bounds
    in~\eqref{eq:rewrite}.  For the upper bound, we set
    $$
    d_2^{\textsc{U}}=d_2+1\,.
    $$
    Then conditions~\textrm{(\textit{i})} and~\textrm{(\textit{iii})} of
    Corollary~\ref{lem:rewriting}
    are ``strong'' enough to imply
    $\textrm{(\textit{i\/}}'\textrm{)}$ and
    $\textrm{(\textit{iii\/}}'\textrm{)}$ of Case~1, applied to
    $d_2^{\textsc{U}}$ instead of $d_2$. Thus, by~\eqref{eq:rewrite2}, we may
    bound the right-hand side of~\eqref{eq:sandwich} from above by
    \begin{multline*}
      |A\cap\rho^{-1}([d_1,d_2+1))|=|A\cap\rho^{-1}([d_1,d_2^{\textsc{U}}))|\\
      \leq\frac{|A|}{m}\(d+\frac{\delta'}{d}\cdot\frac{m^2}{4}\)+
      \frac{d_2^{\textsc{U}}-d_1}{m}|A|\\
      =\frac{|A|}{m}\(d+1+\frac{\delta'}{d}\cdot\frac{m^2}{4}\)+
      \frac{d_2-d_1}{m}|A|\,.
    \end{multline*}
    Hence, the upper bound for $|A\cap\rho^{-1}([d_1,d_2))|$ 
    in~\eqref{eq:rewrite} follows.
    The lower bound necessary to complete the proof of
    Corollary~\ref{lem:rewriting}
    may be verified by the same
    kind of argument applied to $d_2^{\textsc{L}}=d_2-1$ instead of 
    $d_2^{\textsc{U}}$. \qedhere
  \end{case}
\end{proof}

Corollary~\ref{lem:rewriting} above gives us control over
\begin{equation*}
  \big|A\cap\rho^{-1}\big([d_1,d_2)\big)\big|\,,  
\end{equation*}
as long as~$d_1$ and~$d_2$ are bounded away from~$0$ and $m/2$.
The following two lemmas consider the quantities
\begin{equation}
  \label{eq:end_intervals}
  \big|A\cap\rho^{-1}\big([0,d)\big)\big|
  \quad\text{and}\quad
  \Big|A\cap\rho^{-1}\(\left[\llfloor \frac{m}{2}\rrfloor-d-1,
  \llfloor \frac{m}{2}\rrfloor+1\)\)\Big|.
\end{equation}

\begin{lemma}
  \label{lem:initial_interval}
  Suppose~$0<\delta\leq 1/3$, $d\geq\delta m/2$, and~$n\geq4$.
  If~$\DISC(\delta)$ holds for the Cayley graph~$G=G(\Gamma,A)$, then
  \begin{equation}
    \label{eq:initial_interval}
    \big|A\cap\rho^{-1}\big([0,d)\big)\big|
    \leq 4\frac{d}{m}|A|.
  \end{equation}
\end{lemma}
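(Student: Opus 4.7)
The plan is to apply $\DISC(\delta)$ to the enlarged set $U:=\rho^{-1}([0,2d))$ and then extract a bound on $B:=|A\cap\rho^{-1}([0,d))|$ by counting edges of $G$ inside $U$. First, dispose of the trivial case: if $d\geq m/4$ then $4d|A|/m\geq |A|\geq B$ already, so I may assume $d<m/4$. Then $2d<m/2$, every fiber $\rho^{-1}(r)$ has size $n/m$, so $|U|=2nd/m$, and the hypothesis $d\geq\delta m/2$ guarantees $|U|\geq\delta n$. Hence $\DISC(\delta)$ applies to $U$ and yields
\[
e_G(U)\leq (1+\delta)\,e(G)\binom{|U|}{2}\Big/\binom{n}{2}\leq (1+\delta)\cdot\frac{2 n^2 d^2 |A|}{m^2(n-1)}.
\]

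For the matching lower bound I count edges inside $U$ directly using the Cayley structure. For each $a\in A$ with $\rho(a)=t$, the number of $u\in U$ with $u+a\in U$ equals $(n/m)(2d-|t|)$ when the shortest representative $|t|\in(-m/2,m/2]$ satisfies $|t|<2d$, and $0$ otherwise (using $2d<m/2$ to avoid wrap-around). Writing $a_t:=|A\cap\rho^{-1}(t)|$ and exploiting $A=-A$ (which gives $a_t=a_{m-t}$) to merge the contributions of $t$ and $m-t$, I obtain
\[
2 e_G(U)=\frac{n}{m}\Big(2d\, a_0+2\sum_{t=1}^{2d-1}(2d-t)\,a_t\Big).
\]
For every $t\in\{0,1,\dots,d-1\}$ the coefficient of $a_t$ on the right-hand side is at least $2d$, so
\[
2e_G(U)\geq \frac{n}{m}\cdot 2d\cdot\Big(a_0+\sum_{t=1}^{d-1}a_t\Big)=\frac{2ndB}{m}.
\]

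Combining the upper and lower bounds on $e_G(U)$ and simplifying yields $B\leq 2(1+\delta)\cdot\tfrac{n}{n-1}\cdot\tfrac{d|A|}{m}$. For $\delta\leq 1/3$ and $n\geq 4$ the prefactor is at most $2\cdot\tfrac{4}{3}\cdot\tfrac{4}{3}=\tfrac{32}{9}<4$, which gives $B\leq 4d|A|/m$ as claimed.

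The main obstacle is arriving at the constant $4$ rather than something larger. The key point is the use of $A=-A$ to merge the two symmetric arcs around $0$ in the sum defining $2e_G(U)$, effectively doubling the lower bound; without this doubling the same analysis only produces $B\leq 4(1+\delta)(n/(n-1))\cdot d|A|/m$, which exceeds $4d|A|/m$ for any $\delta>0$. The choice of enlargement by the factor $2$ (i.e.\ $U=\rho^{-1}([0,2d))$) is also the one that minimises the final constant: replacing $2d$ by $Kd$ gives a prefactor proportional to $K^2/(K-1)$, which is minimised at $K=2$.
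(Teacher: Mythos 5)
Your proof is correct and follows essentially the same route as the paper's: apply $\DISC(\delta)$ to $U=\rho^{-1}([0,2d))$, lower-bound $e_G(U)$ by $\frac{dn}{m}\,|A\cap\rho^{-1}([0,d))|$ using the Cayley structure (the paper routes this through the auxiliary weighted graph of Lemma~\ref{lem:weighted_graph}, whereas you compute the exact coefficient $(2d-|t|)$ directly and merge $\pm t$ using $A=-A$, but it is the same count), and combine to extract the factor $4=\lceil 32/9\rceil$. Your explicit dispatch of the trivial case $d\geq m/4$, which ensures $[0,2d)$ does not wrap, is a small point of care that the paper glosses over.
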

\begin{proof}
  Let~$\delta>0$ and~$d$ be as in the statement of the lemma, and assume
  that~$G$ satisfies~$\DISC(\delta)$.  Let us estimate from above the number
  of edges induced by
  $$
  U=\rho^{-1}\big([0,2d)\big)
  $$ 
  in~$G$.  We
  have~$|U|=2dn/m\geq\delta n$.  Invoking~$\DISC(\delta)$, using
  that~$0<\delta\leq1/3$ and~$n\geq4$, and recalling that $|A|=\alpha n$,
  we obtain that
  \begin{equation}
    \label{eq:no_edges_upbd}
    e(G[U])\leq(1+\delta)\frac{|A|}{n-1}\binom{|U|}{2}
    \leq4\alpha \(\frac{dn}{m}\)^2.
  \end{equation}
  On the other hand, by Lemma~\ref{lem:weighted_graph}, we have
  \begin{equation}
    \label{eq:other_hand}
      e(G[U])=\sum_{r=0}^{2d-1}\sum_{s=r+1}^{2d-1}w(\{r,s\})+\sum_{r=0}^{2d-1}w(r)
      \geq\sum_{r=0}^{d-1}\sum_{s=r+1}^{r+d-1}w(\{r,s\})+\sum_{r=0}^{2d-1}w(r).
  \end{equation}
  Now fix~$0\leq r<d$.  If~$r<s<r+d$, then~$0<s-r<d$, and,
  by~\eqref{eq:weighted_lemma_diff} of Lemma~\ref{lem:weighted_graph},
  we have
  \begin{align*}
      \sum_{s=r+1}^{r+d-1}w(\{r,s\})
      &=\frac{n}{m}\sum_{s=r+1}^{r+d-1}|A\cap\rho^{-1}(s-r)|\\
      &=\frac{n}{m}\sum_{f=1}^{d-1}|A\cap\rho^{-1}(f)|
      =\frac{n}{m}\big|A\cap\rho^{-1}\big([1,d)\big)\big|\,.
  \end{align*}
  Also, by~\eqref{eq:weighted_lemma_eq},
  $$
  w(r)=\frac{n}{2m}\big|A\cap\rho^{-1}(0)\big|\,,
  $$
  and we 
  may conclude from~\eqref{eq:other_hand} that
  \begin{multline}
    \label{eq:other_hand2}
    e(G[U])
    \geq d\frac{n}{m}\big|A\cap\rho^{-1}\big([1,d)\big)\big|
    +2d\frac{n}{2m}\big|A\cap\rho^{-1}(0)\big|
    =d\frac{n}{m}\big|A\cap\rho^{-1}\big([0,d)\big)\big|.
  \end{multline}
  Comparing~\eqref{eq:no_edges_upbd} and~\eqref{eq:other_hand2},
  inequality~\eqref{eq:initial_interval} follows and our lemma is proved. 
\end{proof}

Our next lemma concerns the second interval in~\eqref{eq:end_intervals}. 

\begin{lemma}
  \label{lem:final_interval}
  Suppose~$0<\delta'\leq1/3$, $d\geq\delta' m$, and~$n\geq4$.
  If~$\DISC_2(\delta')$ holds for the Cayley graph~$G=G(\Gamma,A)$, then
  \begin{align}
    \label{eq:final_interval}
    \Big|A\cap\rho^{-1}\(\left[\llfloor \frac{m}{2}\rrfloor-d-1,
    \llfloor \frac{m}{2}\rrfloor+1\)\)\Big|
    \leq4\frac{d+1}{m}|A|.
  \end{align}
\end{lemma}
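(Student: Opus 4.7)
My plan is to run a bipartite analogue of the argument used in Lemma~\ref{lem:initial_interval}: rather than counting edges inside a single interval near~$0$, I will count edges between a short interval near~$0$ and a longer interval centred at $\lfloor m/2\rfloor$, and compare the $\DISC_2$ upper bound with a lower bound coming from Lemma~\ref{lem:weighted_graph}. As in the previous lemma, I may first observe that when $d$ is close to $m/4$ or larger the target bound $4(d+1)|A|/m$ already exceeds~$|A|$ and the claim is vacuous, so I may assume $d$ is small enough that the sets introduced below are disjoint subsets of $\ZZ/m\ZZ$ that do not wrap around.

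Set $I=[\lfloor m/2\rfloor-d-1,\lfloor m/2\rfloor+1)$ and pick
\[
U=\rho^{-1}([0,d+1))\qand V=\rho^{-1}\bigl([\lfloor m/2\rfloor-d-1,\lfloor m/2\rfloor+d+1)\bigr),
\]
both of which have size at least $\delta' n$ thanks to $d\ge\delta' m$. Applying $\DISC_2(\delta')$ together with the elementary estimate $(1+\delta')n/(n-1)\le(4/3)^2\le 2$ that also appears in the proof of Lemma~\ref{lem:initial_interval}, and using $(d+1)(2d+2)=2(d+1)^2$, I obtain
\[
e(U,V)\le (1+\delta')\frac{|A|\,|U|\,|V|}{n-1}\le \frac{4(d+1)^2|A|\,n}{m^2}.
\]
For the lower bound, Lemma~\ref{lem:weighted_graph} expresses $e(U,V)$ as $\frac{n}{m}\sum_{(r,s)}|A\cap\rho^{-1}(s-r)|$ over pairs $(r,s)\in[0,d+1)\times[\lfloor m/2\rfloor-d-1,\lfloor m/2\rfloor+d+1)$. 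The combinatorial heart of the argument is the observation that each $f\in I$ arises as $s-r$ \emph{exactly} $d+1$ times; this is by design, since $V$ has length $2(d+1)$ and is so positioned that for every $f\in I$ and every $r\in[0,d+1)$ the element $r+f$ lies inside $V$. Discarding the non-negative contributions from $f\notin I$ leaves
\[
e(U,V)\ge (d+1)\,\frac{n}{m}\,|A\cap\rho^{-1}(I)|,
\]
and combining with the upper bound yields $|A\cap\rho^{-1}(I)|\le 4(d+1)|A|/m$, as required.

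I do not anticipate a significant technical obstacle; the argument is essentially a direct transposition of Lemma~\ref{lem:initial_interval} into the bipartite setting, with $\DISC_2$ replacing $\DISC$. The one design choice that really matters is to take $|V|=2|U|$ rather than $|V|=|I|$: if $V$ were chosen equal to $\rho^{-1}(I)$, then the multiplicities of $f$ across $I$ would form a triangle tapering from $d+1$ down to~$1$, producing only the weaker bound of order $(d+1)^2|A|/m$. Doubling the length of $V$ is precisely what keeps the multiplicity uniformly at its peak value $d+1$ across the whole of $I$, and is what converts the crude quadratic estimate into the linear one the lemma claims.
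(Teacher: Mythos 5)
Your proposal is correct and follows essentially the same approach as the paper: the paper takes $U=\rho^{-1}([0,d))$ and $W=\rho^{-1}([\lfloor m/2\rfloor-d-1,\lfloor m/2\rfloor+d))$, compares a $\DISC_2$ upper bound of order $d(d+1)|A|n/m^2$ with a lower bound $d\,\frac{n}{m}|A\cap\rho^{-1}(I)|$ via Lemma~\ref{lem:weighted_graph}, and divides out $d$. Your version shifts the interval lengths by one (using $|U|=(d+1)n/m$ and $|V|=2(d+1)n/m$, so each $f\in I$ has multiplicity exactly $d+1$), but this is an immaterial reparametrisation of the same argument, and your auxiliary observations about disjointness and vacuity of the bound for large $d$ are sound.
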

\begin{proof}
  Let~$\delta'>0$ and~$d$ be as in the statement of the lemma, and assume
  that~$G$ satisfies~$\DISC_2(\delta')$.  Let us estimate from above the number
  of edges in the bipartite graph induced by the vertex classes
  $$
  U=\rho^{-1}([0,d))\quad\text{and}\quad
  W=\rho^{-1}\(\big[\lfloor m/2\rfloor-d-1,\lfloor m/2\rfloor+d\big)\)
  $$ 
  in~$G$.  We
  have~$|U|=dn/m\geq\delta' n$ and~$|W|=(2d+1)n/m>\delta' n$.
  Invoking~$\DISC_2(\delta')$ and using that~$0<\delta'\leq1/3$ and~$n\geq4$, we
  obtain that
  \begin{equation}
    \label{eq:no_edges_upbd_F}
    e(G[U,W])\leq(1+\delta')\frac{|A|}{n-1}|U||W|
    \leq4\alpha\frac{d(d+1)n^2}{m^2}.
  \end{equation}
  On the other hand, by Lemma~\ref{lem:weighted_graph}, we have
  \begin{equation}
    \label{eq:other_hand_F}
    \begin{split}
      e(G[U,W])
      &\geq\sum_{r=0}^{d-1}
      \sum\left\{w(\{r,s\})
        \:\,\llfloor \frac{m}{2}\rrfloor-d-1+r
        \leq s < \llfloor \frac{m}{2}\rrfloor+1+r\right\}\\
      &=\sum_{r=0}^{d-1}
      \frac{n}{m}\sum\left\{|A\cap\rho^{-1}(f)|
        \:\llfloor \frac{m}{2}\rrfloor-d-1
        \leq f<\llfloor \frac{m}{2}\rrfloor+1\right\}\\
      &=d\frac{n}{m}
      \Big|A\cap\rho^{-1}\(\Big[\llfloor \frac{m}{2}\rrfloor-d-1,
      \llfloor \frac{m}{2}\rrfloor+1\Big)\)\Big|. 
    \end{split}
  \end{equation}
  Comparing~\eqref{eq:no_edges_upbd_F} and~\eqref{eq:other_hand_F}, we
  deduce~\eqref{eq:final_interval}.  Lemma~\ref{lem:final_interval}  is
  proved. 
\end{proof}

We may now prove Lemma~\ref{lem:auxZ}.  

\begin{proof}[Proof of Lemma~\ref{lem:auxZ}]
  Let positive reals $\eta\leq1$ and $\sigma\leq1$ be given.  We may assume
  that~$\sigma\eta\leq1/2$.  We set
  \begin{equation}\label{eq:final_delta}
    \delta'=\frac{(\eta\sigma)^2}{240}\leq\frac{1}{3}\,,
  \end{equation}
  and we let $\delta$ be sufficiently small (as given by
  Fact~\ref{lem:DISC_implies_DISC_2}), so that $\DISC(\delta)$ implies
  $\DISC_2(\delta')$. Again, without loss of generality, we may assume
  $\delta\leq\delta'$. We let~$n_0\geq 4$ be sufficiently large so that
  $n\sim_{\delta'/2}(n-1)$ for every $n\geq n_0$.  We shall now show that this
  choice of constants will do.  Thus, let~$G=G(\Gamma,A)$
  with~$|\Gamma|=n\geq n_0$ and $m>1/\delta$ be given (we follow the notation
  in Setup~\ref{setup:1}), and suppose that~$G$ satisfies~$\DISC(\delta)$.

  Let $0\leq D_1<D_2\leq \lfloor m/2 \rfloor+1$ be such
  that $D_2-D_1\geq \eta m$. We fix
  \begin{equation*}
  d=\left\lfloor\frac{\eta\sigma m}{60}\right\rfloor
  \geq\llfloor\frac{\eta\sigma}{60\delta}\rrfloor\geq
  \llfloor\frac{4}{\eta\sigma}\rrfloor\geq4,
  \end{equation*}
  and 
  $$
  d_1=\max\{D_1,d\}\quad\text{and}\quad 
  d_2=\min\left\{D_2,\llfloor\frac{m}{2}\rrfloor-d-1\right\}\,.
  $$
  Using $m>1/\delta\geq 1/\delta'=240/(\eta\sigma)^2$, $2\eta\sigma\leq1$, and 
  $d+1\leq3d/2$, we see that
  \begin{equation}
    1<\delta m\leq \delta' m \leq
    \frac{\delta' m}{2\eta\sigma}=
    \frac{\eta\sigma m}{480}
    \leq d<d+1\leq\frac{\eta\sigma
      m}{40}\label{eq:final_d}
  \end{equation}
  and
  \begin{equation}
    \label{eq:final_int}
    D_2-D_1-2(d+1)\leq d_2-d_1\leq D_2-D_1\,.
  \end{equation}
  Now we check that the assumptions of
  Corollary~\ref{lem:rewriting}, Lemma~\ref{lem:initial_interval}, and
  Lemma~\ref{lem:final_interval} hold
  simultaneously. It is obvious that the conditions in 
  Lemma~\ref{lem:initial_interval} and~\ref{lem:final_interval} hold by
  our choice of $\delta$, $\delta'$, $n_0$, and the 
  inequalities in~\eqref{eq:final_d}. Moreover,
  conditions~\textrm{(\textit{ii})} 
  and~\textrm{(\textit{iv})} of Corollary~\ref{lem:rewriting} 
  hold by the definition of~$\delta$ (yielding $\DISC_2(\delta')$ for
  $G$), and~$n_0$.
  It remains to verify~\textrm{(\textit{iii})} 
  and~\textrm{(\textit{i})} in Corollary~\ref{lem:rewriting}.
  We start with condition~\textrm{(\textit{iii})}. 
  For this we note that, by the left-hand side of~\eqref{eq:final_int}
  and by~\eqref{eq:final_d},
  $$
  d_2-d_1\geq\eta m-\frac{\eta\sigma m}{20}\,.
  $$
  Using $1<\delta m$,~\eqref{eq:final_d}, and $\delta\leq\delta'\leq\eta/6$
  (see~\eqref{eq:final_delta}),
  we verify \textrm{(\textit{iii})}:
  \begin{align*}
    \frac{1}{2}(d_2-d_1-1)-d&\geq 
    \frac{1}{2}\(\eta m-\frac{\eta\sigma m}{20}
    -\delta m-\frac{\eta\sigma m}{20}\)\\
    &\geq \frac{1}{2}\(\frac{\eta m}{2}- \delta m\)\geq
    \frac{3\delta'm -\delta'm}{2}=\delta'm\,.
  \end{align*}
  Moreover, the last inequality implies $d_2-1-d_1>1$ (using $\delta'm>1$) and  
  thus \textrm{(\textit{i})} of Corollary~\ref{lem:rewriting} follows as well.

  Having verified the assumptions of
  Corollary~\ref{lem:rewriting}, Lemma~\ref{lem:initial_interval}, and
  Lemma~\ref{lem:final_interval}, we 
  use these lemmas to verify the claim of Lemma~\ref{lem:auxZ}, i.e.,
  \begin{equation}\label{eq:final_goal}
    \big|A\cap\rho^{-1}\big([D_1,D_2)\big)\big|\sim_{\sigma}\frac{D_2-D_1}{m}|A|\,.
  \end{equation}
  We first derive the upper bound in~\eqref{eq:final_goal}.
  Note that
  \begin{align*}
    \big|A\cap\rho^{-1}\big([D_1,D_2)\big)\big|
   &\leq 
    \big|A\cap\rho^{-1}\big([0,d)\big)\big|+
    \big|A\cap\rho^{-1}\big([d_1,d_2)\big)\big|\\
    &\qquad\qquad+
    \big|A\cap\rho^{-1}\big([\lfloor m/2\rfloor-d-1,\lfloor m/2\rfloor+1)\big)\big|\,.
  \end{align*}
  Applying Lemma~\ref{lem:initial_interval},
  Corollary~\ref{lem:rewriting}, and Lemma~\ref{lem:final_interval} to the
  first, second, and third terms, of the right-hand side of the above inequality,
  respectively, yields
  \begin{align*}
    \big|A\cap\rho^{-1}\big([D_1,D_2)\big)\big|
    &\leq 
    4\frac{d}{m}|A| + 
    \(\frac{|A|}{m}\(d+1+\frac{\delta' m^2}{4d}\)+\frac{d_2-d_1}{m}|A|\)+
    4\frac{d+1}{m}|A|\\
    &\leq \frac{|A|}{m}\(10(d+1) +\frac{\delta' m^2}{4d}\)+\frac{d_2-d_1}{m}|A|\,.
  \end{align*}
  Using~\eqref{eq:final_delta}, \eqref{eq:final_d}, and~\eqref{eq:final_int}, 
  we can bound this last expression further by
  $$
  \frac{|A|}{m}\(\frac{\sigma \eta m}{4}+\frac{\sigma
    \eta m}2\)+\frac{D_2-D_1}{m}|A|\,,
  $$
  and, finally, $D_2-D_1\geq\eta m$ gives
  \begin{equation}\label{eq:final_upper}
    \big|A\cap\rho^{-1}\big([D_1,D_2)\big)\big| 
    \leq\(1+\sigma\)\frac{D_2-D_1}{m}|A|\,.
  \end{equation}
  
  It is left for us to show the lower bound in~\eqref{eq:final_goal}.  Note
  $$
  \big|A\cap\rho^{-1}\big([D_1,D_2)\big)\big|\geq 
  \big|A\cap\rho^{-1}\big([d_1,d_2)\big)\big|\,,
  $$
  and hence Corollary~\ref{lem:rewriting} implies
  \begin{equation}\label{eq:final_low1}
    \big|A\cap\rho^{-1}\big([D_1,D_2)\big)\big|
    \geq \frac{d_2-d_1}{m}|A|-\frac{|A|}{m}
    \(d+1+\frac{\delta' m^2}{4d}\)\,.
  \end{equation}
  Similar calculations to the ones above, based on~\eqref{eq:final_int},
  \eqref{eq:final_delta}, and~\eqref{eq:final_d}, show that
  \begin{align} 
      \frac{d_2-d_1}{m}|A|-\frac{|A|}{m}
      \!\!\(d+1+\frac{\delta' m^2}{4d}\)\!\!
      &\geq\frac{D_2-D_1}{m}|A|-\frac{|A|}{m}\(3(d+1)+\frac{\delta'
        m^2}{4d}\)\nonumber\\
      &\geq \frac{D_2-D_1}{m}|A|\!\!\(\!1\!-\!\frac{\sigma\eta
        m}{10 (D_2-D_1)}\!-\!
      \frac{\sigma\eta m}{2 (D_2-D_1)}\!\)\!\!.\label{eq:final_low2} 
  \end{align}
  Again, since $D_2-D_1\geq\eta m$, it follows
  from~\eqref{eq:final_low1} 
  combined with~\eqref{eq:final_low2}
  that
  \begin{equation}\label{eq:final_lower}
    \big|A\cap\rho^{-1}\big([D_1,D_2)\big)\big|
    \geq\(1-\sigma\)\frac{D_2-D_1}{m}|A|\,.
  \end{equation}
  Finally,~\eqref{eq:final_upper} and~\eqref{eq:final_lower}
  imply~\eqref{eq:final_goal}, and therefore Lemma~\ref{lem:auxZ} is proved.
\end{proof}

\subsubsection{$\ZINTDISC$ implies $\EIG$}\label{sec:ZINTDISC_to_EIG}
In this section we give the proofs of Lemmas~\ref{lem:translation}
and~\ref{lem:auxS}. We start with the proof of Lemma~\ref{lem:translation},
which ``translates'' the results of Lemma~\ref{lem:auxZ} for $\rho$ and
$\ZZ/m\ZZ$ to $\chia$ and $\RR/2\pi\RR$.

\begin{proof}[Proof of Lemma~\ref{lem:translation}]
Let~$\sigma$ and~$\eta$ be as in the statement of Lemma~\ref{lem:translation},
and suppose~$A$ satisfies $\ZINTDISC(\rho;\eta/2,\sigma/3)$.
Let~$0\leq\theta_1<\theta_2\leq\pi$ with~$\theta_2-\theta_1\geq2\pi\eta$ be
given.  Our aim is to show~\eqref{eq:S-INT-DISC}, i.e.,
$$
\big|A\cap\chia^{-1}\([\theta_1,\theta_2]\)\big|
\sim_{\sigma}\frac{\theta_2-\theta_1}{2\pi}|A|\,.
$$

Recall we have~$\rho\:\Gamma\to\ZZ/m\ZZ$ and~$\Omega\:\ZZ/m\ZZ\to \RR/2\pi\RR$ such
that~$\chia=\Omega\rho$ (see Setup~\ref{setup:1}).  Put
\begin{equation*}
  \label{eq:def_d1_d2}
  D_1=\llceil m\frac{\theta_1}{2\pi}\rrceil
  \quad\text{and}\quad
  D_2=\llfloor m\frac{\theta_2}{2\pi}\rrfloor+1\,.
\end{equation*}
Observe that
$$
\frac{2\pi}m\(\llceil\frac{m\theta_1}{2\pi}\rrceil-1\)
<\theta_1\leq
\frac{2\pi}m\llceil\frac{m\theta_1}{2\pi}\rrceil,
$$
and hence we have
$$
\frac{2\pi}m\(\llceil\frac{m\theta_1}{2\pi}\rrceil-1\)=
\Omega(D_1-1)<\theta_1\leq\Omega(D_1)
=\frac{2\pi}m\llceil\frac{m\theta_1}{2\pi}\rrceil\,.
$$
Similarily, one may check that
$\Omega(D_2-1)\leq\theta_2<\Omega(D_2)$,
and consequently 
\begin{equation*}
  \label{eq:Omega_back}
  \Omega^{-1}\big([\theta_1,\theta_2]\big)
  =\left[D_1,D_2\).
\end{equation*}
We now observe that
\begin{equation*}
  \label{eq:ZINT-condition}
  D_2-D_1=\llfloor m\frac{\theta_2}{2\pi}\rrfloor+1
  -\llceil m\frac{\theta_1}{2\pi}\rrceil
  =\frac{m}{2\pi}(\theta_2-\theta_1)+O_1(1).
\end{equation*}
Using that~$m\eta\geq m\eta\sigma\geq 3$, we deduce that 
\begin{equation*}
  \label{eq:ZINT-condition2}  
  D_2-D_1
  \geq\frac{m}{2\pi}(\theta_2-\theta_1)-1
  \geq\eta m-1\geq\frac{1}{2}\eta m.
\end{equation*}
Hence, by property~$\ZINTDISC(\rho;\eta/2,\sigma/3)$, we have
\begin{equation}
  \label{eq:pf_translation}
  \begin{split}
    \big|A\cap\chia^{-1}([\theta_1,\theta_2])\big|
    &=\big|A\cap\rho^{-1}([D_1,D_2))\big|\\
    &\sim_{\sigma/3}\frac{1}{m}(D_2-D_1)|A|\\
    &=\frac{1}{m}\(m\frac{\theta_2}{2\pi}-m\frac{\theta_1}{2\pi}+O_1(1)\)|A|\\
    &=\frac{1}{2\pi}\(\theta_2-\theta_1+O_1\(\frac{2\pi}{m}\)\)|A|,
  \end{split}
\end{equation}
which, using that~$m\eta\sigma\geq 3$, is
\begin{equation}
  \label{eq:pf_translation2}
  \frac{1}{2\pi}\(1+O_1\(\frac{\sigma}{3}\)\)(\theta_2-\theta_1)|A|.
\end{equation}
We conclude from~\eqref{eq:pf_translation} and~\eqref{eq:pf_translation2} that
\begin{equation*}
  \label{eq:pf_translation3}
  \big|A\cap\chia^{-1}([\theta_1,\theta_2])\big|
  \sim_\sigma\frac{(\theta_2-\theta_1)}{2\pi}|A|,
\end{equation*}
as required.  The proof of Lemma~\ref{lem:translation} is complete.
\end{proof}

Finally, we prove the last auxiliary lemma, Lemma~\ref{lem:auxS}, 
used in the proof of the
main theorem, Theorem~\ref{thm:main_positive}. 
\begin{proof}[Proof of Lemma~\ref{lem:auxS}]
Let~$0<\eps\leq1$ be given.  We define the constants~$\eta$ and~$\sigma>0$
as follows.
\begin{equation}
  \label{eq:def_eta_sigma}
  \eta=\frac{\eps}{8\pi}<\frac{1}{16}
  \quad\text{and}\quad
  \sigma=\frac{1}{8}\eps.
\end{equation}
In the remainder of the proof, we show that the above choice for the
constants~$\eta$ and~$\sigma>0$ will do.  Thus, let us suppose that
the set~$A\subseteq\Gamma\setminus\{0\}$
satisfies property $\SINTDISC(\chia;\eta,\sigma)$.  Our aim is to show
that~\eqref{eq:auxS} holds, i.e.,
$$
|\lambda^{(\chi)}|=\Big|\sum_{a\in A}\chi(a)\Big|\leq\eps|A|\,.
$$

For a complex number $z\in\CC$, in what follows, we write~$\Re(z)$ for the
real and~$\Im(z)$ for the imaginary part of~$z$.  Let us first observe that,
owing to the fact that the eigenvalues of an undirected graph are real and
that~$A=-A$, we have
\begin{equation}\label{eq:pf_auxS}
  \begin{split}
    \lambda^{(\chi)}&=\sum_{a\in A}\chi(a)
    =\Re\bigg(\sum_{a\in A}\chi(a)\bigg)\\
    &=2\Re\bigg(\sum_{a\in A}\{\chi(a)\colond \Im \(\chi(a)\)>0\}\bigg)+
    \Re\bigg(\sum_{a\in A}\{\chi(a)\colond \Im\(\chi(a)\)=0\}\bigg).
  \end{split}
\end{equation}
Moreover, it follows from $\SINTDISC(\chia;\eta,\sigma)$ that
\begin{align*}
|A\cap\chi^{-1}(1)|=|A\cap\chia^{-1}(0)|&\leq|A\cap\chia^{-1}([0,2\pi\eta])|\\
&\leq(1+\sigma)\eta|A|\leq 2\eta|A|\leq\frac{1}{4}\eps|A|\,.
\end{align*}
Similarly, we observe that $|A\cap\chi^{-1}(-1)|=|A\cap\chia^{-1}(\pi)|\leq
(1/4)\eps|A|$ and thus
$$
\Bigg|\Re\bigg(\sum_{a\in A}\{\chi(a)\colond
\Im\(\chi(a)\)=0\}\bigg)\Bigg|
=\big||A\cap\chi^{-1}(1)|-|A\cap\chi^{-1}(-1)|\big|\leq\frac{1}{4}\eps |A|\,.
$$
Therefore, we infer from~\eqref{eq:pf_auxS} that
\begin{equation}\label{eq:auxS_f}
  |\lambda^{(\chi)}|\leq 2\bigg|\Re\bigg(\sum_{a\in A}
  \{\chi(a)\colond \Im \(\chi(a)\)\geq 0\}\bigg)\bigg|+\frac{1}{4}\eps |A|\,.
\end{equation}
Thus, we are interested in~$\Re(\chi(a))$ ($a\in A$, $\Im(\chi(a))\geq 0$).

Put
\begin{equation}
  \label{eq:def_k_theta}
  k=\llfloor\frac{\pi/2}{4\pi\eta}\rrfloor
  =\llfloor\frac{1}{8 \eta}\rrfloor
  \quad\text{and}\quad
  \phi=\frac{\pi}{2k}.
\end{equation}
Observe for later reference that by~\eqref{eq:def_eta_sigma}
and~\eqref{eq:def_k_theta}
\begin{equation}
  \label{eq:phi_bds}
  4\pi\eta\leq\phi\leq 8\pi\eta=\eps.
\end{equation}
We shall subdivide the upper half of~$S^1$ into~$2k$ arcs, symmetric with
respect to the imaginary axis (in fact, $\bfi$~will be left out).  The
endpoints of the arcs will be~$0,\phi,2\phi,\dots,\pi$.  Let us in fact denote
by~$I_j^+$ the arc of the~$z=\exp(\theta\bfi)\in S^1$ with
\begin{equation}
  \label{eq:def_Ij+}
  (j-1)\phi\leq\theta<j\phi,
\end{equation}
for all~$1\leq j\leq k$.  Similarly, we let~$I_j^-$ be the arc of
the~$z=\exp(\theta\bfi)\in S^1$ with
\begin{equation}
  \label{eq:def_Ij-}
  \pi-j\phi<\theta\leq\pi-(j-1)\phi,
\end{equation}
for all~$1\leq j\leq k$.  Clearly, if~$a\in\Gamma$ is such that~$\chi(a)\in
I_j^+$ ($1\leq j\leq k$), then
\begin{equation}
  \label{eq:cos_bds_Ij+}
  \cos (j\phi)<\Re(\chi(a))\leq\cos((j-1)\phi)\,.
\end{equation}
Similarly, if~$a\in\Gamma$ is such that~$\chi(a)\in I_j^-$ ($1\leq j\leq k$),
then
\begin{equation}
  \label{eq:cos_bds_Ij-}
  -\cos((j-1)\phi)<\Re(\chi(a))\leq-\cos (j\phi)\,.
\end{equation}
We now state and prove the following claim.

\begin{claim}
  \label{claim:SINT_variant}
  For all~$1\leq j\leq k$ and both~$*\in\{+,-\}$, we have
  \begin{equation}
    \label{eq:claim_SINT_variant}
    \big|A\cap\chi^{-1}(I_j^*)\big|\sim_{2\sigma}\frac{\phi}{2\pi}|A|. 
  \end{equation}
\end{claim}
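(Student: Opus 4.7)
The plan is to translate each arc $I_j^\ast$ into an interval in $[0,\pi]$ via $\chia$ and then invoke property $\SINTDISC(\chia;\eta,\sigma)$ directly. By the definitions~\eqref{eq:def_Ij+}--\eqref{eq:def_Ij-},
\[
|A\cap\chi^{-1}(I_j^+)|=\big|A\cap\chia^{-1}\bigl([(j-1)\phi,j\phi)\bigr)\big|
\qand
|A\cap\chi^{-1}(I_j^-)|=\big|A\cap\chia^{-1}\bigl((\pi-j\phi,\pi-(j-1)\phi]\bigr)\big|,
\]
and~\eqref{eq:phi_bds} tells us that $\phi\geq 4\pi\eta$, so each of these intervals has length at least twice the minimum length $2\pi\eta$ at which $\SINTDISC$ gives information.

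For the upper bound on $|A\cap\chi^{-1}(I_j^+)|$ I would apply $\SINTDISC$ directly to the closed interval $[(j-1)\phi,j\phi]\subseteq[0,\pi/2]$, obtaining $|A\cap\chi^{-1}(I_j^+)|\leq(1+\sigma)\tfrac{\phi}{2\pi}|A|$, which is comfortably within the required $\sim_{2\sigma}$ tolerance. For the lower bound I would apply $\SINTDISC$ to the slightly shortened closed interval $[(j-1)\phi,j\phi-2\pi\eta]$, which still has length $\phi-2\pi\eta\geq 2\pi\eta$ (in fact at least $\phi/2$) and is contained in $[(j-1)\phi,j\phi)$; this yields $|A\cap\chi^{-1}(I_j^+)|\geq(1-\sigma)\tfrac{\phi-2\pi\eta}{2\pi}|A|$. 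The case $\ast=-$ is entirely analogous, using the mirror closed interval $[\pi-j\phi,\pi-(j-1)\phi]$ for the upper bound and its shortening $[\pi-j\phi+2\pi\eta,\pi-(j-1)\phi]$ for the lower bound.

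The main technical point will be checking that the one-sided lower bound $(1-\sigma)\tfrac{\phi-2\pi\eta}{2\pi}|A|$ actually fits inside the stated $\sim_{2\sigma}$ tolerance, which amounts to the inequality $(1-\sigma)(\phi-2\pi\eta)\geq(1-2\sigma)\phi$, i.e., $\sigma\phi\geq(1-\sigma)\cdot 2\pi\eta$. Since $2\pi\eta/\phi$ may be as large as $1/2$ under the choices in~\eqref{eq:def_eta_sigma}--\eqref{eq:def_k_theta}, this bound is not automatic from $\phi\geq 4\pi\eta$ alone, and this is where the obstacle lies; I expect the proof to dispose of it either by fine-tuning the relationship between $\eta$, $\sigma$, and $\phi$ (which is why the constants in~\eqref{eq:def_eta_sigma} were set with some slack) or by bounding $|A\cap\chia^{-1}(\{j\phi\})|$ directly using $\SINTDISC$ on a minimal-length interval around $\{j\phi\}$ and absorbing the resulting additive error into the factor of~$2$ in~$2\sigma$.
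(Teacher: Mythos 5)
Your proposal correctly translates the arcs $I_j^\pm$ into half-open $\chia$-intervals and correctly gets the upper bound by applying $\SINTDISC$ to the closed interval of the same length. But you also correctly observe that your lower-bound scheme fails, and the two repairs you suggest do not work: (a) shortening by the fixed amount $2\pi\eta$ requires $(1-\sigma)(\phi-2\pi\eta)\geq(1-2\sigma)\phi$, which under~\eqref{eq:phi_bds} needs $\sigma\geq 1/3$, far from $\sigma=\eps/8$; and (b) bounding $|A\cap\chia^{-1}(\{j\phi\})|$ by $\SINTDISC$ applied to a length-$2\pi\eta$ window gives $|A\cap\chia^{-1}(\{j\phi\})|\leq(1+\sigma)\eta|A|$, and absorbing this error needs $\sigma\phi/2\pi\geq(1+\sigma)\eta$, i.e.\ $\sigma\geq1$. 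There is no fine-tuning of $\eta$, $\sigma$, $\phi$ that fixes this, because $\phi\leq 8\pi\eta$ is forced by~\eqref{eq:def_k_theta}, so the interval is at most four times the minimal $\SINTDISC$-scale and a fixed shortening of one $\SINTDISC$-scale is a constant-fraction loss.

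The key observation you are missing is that the shortening need not be $2\pi\eta$ at all: it can be made \emph{arbitrarily small}. The map $\chia$ takes only the $m$ values $\Omega(\ZZ/m\ZZ)$, a discrete subset of $\RR/2\pi\RR$, so for any half-open $[\psi_1,\psi_2)$ there is a $\xi>0$ (as small as we like) with $A\cap\chia^{-1}([\psi_1,\psi_2))=A\cap\chia^{-1}([\psi_1,\psi_2-\xi])$; in particular we may take $\xi\leq 2\pi\eta\sigma$. Then $\psi_2-\psi_1-\xi\geq 4\pi\eta-2\pi\eta\sigma\geq 2\pi\eta$, so $\SINTDISC(\chia;\eta,\sigma)$ applies to the closed interval $[\psi_1,\psi_2-\xi]$ and yields a $\sim_\sigma$ estimate against $\frac{\psi_2-\psi_1-\xi}{2\pi}|A|$; and since $\xi/(\psi_2-\psi_1)\leq \xi/(4\pi\eta)\leq\sigma/2$, this differs from the target $\frac{\psi_2-\psi_1}{2\pi}|A|$ by only a $\sim_{\sigma/2}$ factor. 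Composing $(1\pm\sigma)(1\pm\sigma/2)$ falls inside $[1-2\sigma,1+2\sigma]$ for $\sigma\leq1$, giving the $\sim_{2\sigma}$ conclusion. The $I_j^-$ case is the mirror image, shortening the left endpoint of $(\pi-j\phi,\pi-(j-1)\phi]$ by an arbitrarily small $\xi$. That finiteness of $\im\chia$ is what lets the shortening be made negligible is the missing idea in your proposal.
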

\begin{proof}
  This claim follows easily from $\SINTDISC(\chia;\eta,\sigma)$.
  Let~$0\leq\psi_1<\psi_2\leq\pi$ be such that
  \begin{equation}
    \label{eq:psi_gap_bd}
    \psi_2-\psi_1\geq4\pi\eta.
  \end{equation}
  We shall show that
  \begin{equation}
    \label{eq:psi_interval}
    \big|A\cap\chia^{-1}\big([\psi_1,\psi_2)\big)\big|
    \sim_{2\sigma}\frac{1}{2\pi}(\psi_2-\psi_1)|A|.
  \end{equation}
  One may similarly show that 
  \begin{equation}
    \label{eq:psi_interval2}
    \big|A\cap\chia^{-1}\big((\psi_1,\psi_2]\big)\big|
    \sim_{2\sigma}\frac{1}{2\pi}(\psi_2-\psi_1)|A|.
  \end{equation}
  Claim~\ref{claim:SINT_variant} follows from~\eqref{eq:psi_interval}
  and~\eqref{eq:psi_interval2}. In particular~\eqref{eq:psi_interval}
  with $\psi_1=(j-1)\phi$ and $\psi_2=j\phi$ gives
  the claim for intervals of the type $I^+_j$ and
  similarly~\eqref{eq:psi_interval2}
  yields~\eqref{eq:claim_SINT_variant} for intervals of the type~$I^-_j$.

  To prove~\eqref{eq:psi_interval}, observe first that there is a~$\xi>0$ such
  that 
  \begin{equation}
    \label{eq:equivalent}
    A\cap\chia^{-1}\big([\psi_1,\psi_2)\big)
    =A\cap\chia^{-1}\big([\psi_1,\psi_2-\xi]\big).
  \end{equation}
  Moreover, we may clearly assume that
  \begin{equation}
    \label{eq:xi_bound}
    \frac{\xi}{4\pi\eta}\leq\frac{\sigma}{2}.
  \end{equation}
  Then~$\psi_2-\psi_1-\xi\geq2\pi\eta$, and hence we may apply
  $\SINTDISC(\chia;\eta,\sigma)$ to deduce that
  \begin{equation}
    \label{eq:psi_bound_final}
    \big|A\cap\chia^{-1}\big([\psi_1,\psi_2-\xi]\big)\big|
    \sim_\sigma\frac{1}{2\pi}(\psi_2-\psi_1-\xi)|A|,
  \end{equation}
  and, by~\eqref{eq:psi_gap_bd} and~\eqref{eq:xi_bound}, we have 
  \begin{equation}
    \label{eq:psi_bound_final2}
    \frac{1}{2\pi}(\psi_2-\psi_1-\xi)|A|
    \sim_{\sigma/2}\frac{1}{2\pi}(\psi_2-\psi_1)|A|. 
  \end{equation}
  Relation~\eqref{eq:psi_interval} follows from~\eqref{eq:equivalent},
  \eqref{eq:psi_bound_final}, and~\eqref{eq:psi_bound_final2}, by
  $$
  A\cap\chia^{-1}\big([\psi_1,\psi_2)\big)
  =A\cap\chia^{-1}\big([\psi_1,\psi_2-\xi]\big)\sim_{2\sigma}
  \frac{1}{2\pi}(\psi_2-\psi_1)|A|\,.
  $$
\end{proof}

Claim~\ref{claim:SINT_variant} and inequalities~\eqref{eq:cos_bds_Ij+}
and~\eqref{eq:cos_bds_Ij-} may now be used to estimate the sum
in~\eqref{eq:auxS_f}.  We have
\begin{multline}
  \label{eq:split_sum_j}
  \Re\bigg(\sum_{a\in A}\{\chi(a)\:\Im\(\chi(a)\)\geq 0\}\bigg)
  =\Re\bigg(\sum_{j=1}^{k}
  \sum_{a\in A}\{\chi(a)\:\chi(a)\in I_j^+\cup I_j^-\}\bigg)\,. 
\end{multline}
Fix~$1\leq j\leq k$.  We have, by Claim~\ref{claim:SINT_variant} and
inequalities~\eqref{eq:cos_bds_Ij+} and~\eqref{eq:cos_bds_Ij-},
\begin{equation}
  \label{eq:j_sum}
  \Re\!\bigg(\!\sum_{a\in A}\{\chi(a)\:\chi(a)\in I_j^+\cup I_j^-\}\!\!\bigg)
  \leq\frac{\phi}{2\pi}|A|\Big(\!(1+2\sigma)\cos\((j-1)\phi\)
  -(1-2\sigma)\cos\(j\phi\)\!\!\Big)\,.
\end{equation}
Using that~$\cos((j-1)\phi)-\cos(j\phi)\leq\phi$, we observe that the
right-hand side of~\eqref{eq:j_sum} is smaller than
\begin{equation}\label{eq:final_ff}
  \frac{\phi}{2\pi}|A|\Big(\phi+4\sigma\Big)\,.
\end{equation}

Therefore, from~\eqref{eq:j_sum} and~\eqref{eq:final_ff} we deduce
that the expression in~\eqref{eq:split_sum_j} is, in absolute value, at most 
\begin{equation}
  \label{eq:eq:split_sum_j_final}
  k\frac{\phi}{2\pi}|A|(\phi+4\sigma)
  =\frac{1}{4}|A|(\phi+4\sigma)
  \leq\frac{1}{4}|A|\(\eps+\frac{1}{2}\eps\)
  =\frac{3}{8}\eps|A|\,,
\end{equation}
where in this inequality we used~\eqref{eq:def_eta_sigma}
and~\eqref{eq:phi_bds}.  Combining~\eqref{eq:eq:split_sum_j_final}
with~\eqref{eq:auxS_f}, we infer that
$$
|\lambda^{(\chi)}|\leq2\frac{3}{8}\eps|A|+\frac{1}{4}\eps|A|=\eps|A|\,,
$$
as claimed in Lemma~\ref{lem:auxS}.
\end{proof}

\begin{bibdiv}
\begin{biblist}

\bib{alon86:_eigen}{article}{
   author={Alon, N.},
   title={Eigenvalues and expanders},
   note={Theory of computing (Singer Island, Fla., 1984)},
   journal={Combinatorica},
   volume={6},
   date={1986},
   number={2},
   pages={83--96},
   issn={0209-9683},
   review={\MR{875835}},
   doi={10.1007/BF02579166},
}

\bib{alon88:_explic}{article}{
   author={Alon, N.},
   author={Chung, F. R. K.},
   title={Explicit construction of linear sized tolerant networks},
   journal={Discrete Math.},
   volume={72},
   date={1988},
   number={1-3},
   pages={15--19},
   issn={0012-365X},
   review={\MR{975519}},
   doi={10.1016/0012-365X(88)90189-6},
}

\bib{alon85}{article}{
   author={Alon, N.},
   author={Milman, V. D.},
   title={$\lambda_1,$ isoperimetric inequalities for graphs, and
   superconcentrators},
   journal={J. Combin. Theory Ser. B},
   volume={38},
   date={1985},
   number={1},
   pages={73--88},
   issn={0095-8956},
   review={\MR{782626}},
   doi={10.1016/0095-8956(85)90092-9},
}

\bib{ACHKRS10}{article}{
   author={Alon, Noga},
   author={Coja-Oghlan, Amin},
   author={H{\`a}n, Hi{\^{\d{e}}}p},
   author={Kang, Mihyun},
   author={R{\"o}dl, Vojt{\v{e}}ch},
   author={Schacht, Mathias},
   title={Quasi-randomness and algorithmic regularity for graphs with
   general degree distributions},
   journal={SIAM J. Comput.},
   volume={39},
   date={2010},
   number={6},
   pages={2336--2362},
   issn={0097-5397},
   review={\MR{2644348}},
   doi={10.1137/070709529},
}

\bib{AlSp08}{book}{
   author={Alon, Noga},
   author={Spencer, Joel H.},
   title={The probabilistic method},
   series={Wiley-Interscience Series in Discrete Mathematics and
   Optimization},
   edition={3},
   note={With an appendix on the life and work of Paul Erd\H os},
   publisher={John Wiley \& Sons, Inc., Hoboken, NJ},
   date={2008},
   pages={xviii+352},
   isbn={978-0-470-17020-5},
   review={\MR{2437651}},
   doi={10.1002/9780470277331},
}

\bib{babai79:_spect_cayley}{article}{
   author={Babai, L{\'a}szl{\'o}},
   title={Spectra of Cayley graphs},
   journal={J. Combin. Theory Ser. B},
   volume={27},
   date={1979},
   number={2},
   pages={180--189},
   issn={0095-8956},
   review={\MR{546860}},
   doi={10.1016/0095-8956(79)90079-0},
}

\bib{BL06}{article}{
   author={Bilu, Yonatan},
   author={Linial, Nathan},
   title={Lifts, discrepancy and nearly optimal spectral gap},
   journal={Combinatorica},
   volume={26},
   date={2006},
   number={5},
   pages={495--519},
   issn={0209-9683},
   review={\MR{2279667}},
   doi={10.1007/s00493-006-0029-7},
}

\bib{ChGr02}{article}{
   author={Chung, Fan},
   author={Graham, Ronald},
   title={Sparse quasi-random graphs},
   note={Special issue: Paul Erd\H os and his mathematics},
   journal={Combinatorica},
   volume={22},
   date={2002},
   number={2},
   pages={217--244},
   issn={0209-9683},
   review={\MR{1909084}},
   doi={10.1007/s004930200010},
}

\bib{chung89:_quasi}{article}{
   author={Chung, F. R. K.},
   author={Graham, R. L.},
   author={Wilson, R. M.},
   title={Quasi-random graphs},
   journal={Combinatorica},
   volume={9},
   date={1989},
   number={4},
   pages={345--362},
   issn={0209-9683},
   review={\MR{1054011}},
   doi={10.1007/BF02125347},
}

\bib{CFZ14}{article}{
   author={Conlon, David},
   author={Fox, Jacob},
   author={Zhao, Yufei},
   title={Extremal results in sparse pseudorandom graphs},
   journal={Adv. Math.},
   volume={256},
   date={2014},
   pages={206--290},
   issn={0001-8708},
   review={\MR{3177293}},
   doi={10.1016/j.aim.2013.12.004},
}

\bib{CZ}{article}{
   author={Conlon, David},
   author={Zhao, Yufei},
   title={Quasirandom Cayley graphs},
   eprint={1603.03025},
   note={Submitted},
}

\bib{donath72:_algor}{article}{
      author={Donath, W.~E.},
      author={Hoffman, A.~J.},
       title={Algorithms for partitioning of graphs and computer logic based on
  eigenvectors of connection matrices},
        date={1972},
     journal={IBM Techn. Disclosure Bull.},
      volume={15},
       pages={938--944},
}

\bib{donath73:_lower}{article}{
   author={Donath, W. E.},
   author={Hoffman, A. J.},
   title={Lower bounds for the partitioning of graphs},
   journal={IBM J. Res. Develop.},
   volume={17},
   date={1973},
   pages={420--425},
   issn={0018-8646},
   review={\MR{0329965}},
}

\bib{fiedler73:_algeb}{article}{
   author={Fiedler, Miroslav},
   title={Algebraic connectivity of graphs},
   journal={Czechoslovak Math. J.},
   volume={23(98)},
   date={1973},
   pages={298--305},
   issn={0011-4642},
   review={\MR{0318007}},
}

\bib{fiedler75}{article}{
   author={Fiedler, Miroslav},
   title={A property of eigenvectors of nonnegative symmetric matrices and
   its application to graph theory},
   journal={Czechoslovak Math. J.},
   volume={25(100)},
   date={1975},
   number={4},
   pages={619--633},
   issn={0011-4642},
   review={\MR{0387321}},
}

\bib{frankl88:_hadam}{article}{
   author={Frankl, P.},
   author={R{\"o}dl, V.},
   author={Wilson, R. M.},
   title={The number of submatrices of a given type in a Hadamard matrix and
   related results},
   journal={J. Combin. Theory Ser. B},
   volume={44},
   date={1988},
   number={3},
   pages={317--328},
   issn={0095-8956},
   review={\MR{941440}},
   doi={10.1016/0095-8956(88)90040-8},
}

\bib{Gowers}{misc}{
      author={Gowers, W.~T.},
       title={personal communication},
}

\bib{hall70:_r_dimen}{article}{
      author={Hall, K.~M.},
       title={R-{D}imensional quadratic placement algorithm},
    language={English},
        date={1970},
     journal={Management Science Series A (Theory)},
      volume={17},
      number={3},
       pages={219--229},
}

\bib{KohaRo01_rpI}{article}{
   author={Kohayakawa, Y.},
   author={R{\"o}dl, V.},
   title={Regular pairs in sparse random graphs. I},
   journal={Random Structures Algorithms},
   volume={22},
   date={2003},
   number={4},
   pages={359--434},
   issn={1042-9832},
   review={\MR{1980964}},
   doi={10.1002/rsa.10081},
}

\bib{KoRoSi03}{article}{
   author={Kohayakawa, Y.},
   author={R{\"o}dl, V.},
   author={Sissokho, P.},
   title={Embedding graphs with bounded degree in sparse pseudorandom
   graphs},
   journal={Israel J. Math.},
   volume={139},
   date={2004},
   pages={93--137},
   issn={0021-2172},
   review={\MR{2041225}},
   doi={10.1007/BF02787543},
}

\bib{KrSu03}{incollection}{
   author={Krivelevich, M.},
   author={Sudakov, B.},
   title={Pseudo-random graphs},
   conference={
      title={More sets, graphs and numbers},
   },
   book={
      series={Bolyai Soc. Math. Stud.},
      volume={15},
      publisher={Springer, Berlin},
   },
   date={2006},
   pages={199--262},
   review={\MR{2223394}},
   doi={10.1007/978-3-540-32439-3\_10},
}

\bib{lovasz75:_spect}{article}{
   author={Lov{\'a}sz, L.},
   title={Spectra of graphs with transitive groups},
   journal={Period. Math. Hungar.},
   volume={6},
   date={1975},
   number={2},
   pages={191--195},
   issn={0031-5303},
   review={\MR{0398886}},
}

\bib{Lo07}{book}{
   author={Lov{\'a}sz, L{\'a}szl{\'o}},
   title={Combinatorial problems and exercises},
   edition={2},
   publisher={AMS Chelsea Publishing, Providence, RI},
   date={2007},
   pages={642},
   isbn={978-0-8218-4262-1},
   review={\MR{2321240}},
}

\bib{rodl86}{article}{
   author={R{\"o}dl, Vojt{\v{e}}ch},
   title={On universality of graphs with uniformly distributed edges},
   journal={Discrete Math.},
   volume={59},
   date={1986},
   number={1-2},
   pages={125--134},
   issn={0012-365X},
   review={\MR{837962}},
   doi={10.1016/0012-365X(86)90076-2},
}

\bib{Se77}{book}{
   author={Serre, Jean-Pierre},
   title={Linear representations of finite groups},
   note={Translated from the second French edition by Leonard L. Scott;
   Graduate Texts in Mathematics, Vol. 42},
   publisher={Springer-Verlag, New York-Heidelberg},
   date={1977},
   pages={x+170},
   isbn={0-387-90190-6},
   review={\MR{0450380}},
}

\bib{sinclair89:_approx_markov}{article}{
   author={Sinclair, Alistair},
   author={Jerrum, Mark},
   title={Approximate counting, uniform generation and rapidly mixing Markov
   chains},
   journal={Inform. and Comput.},
   volume={82},
   date={1989},
   number={1},
   pages={93--133},
   issn={0890-5401},
   review={\MR{1003059}},
   doi={10.1016/0890-5401(89)90067-9},
}

\bib{spielman12:_spect}{incollection}{
   author={Spielman, Daniel},
   title={Spectral graph theory},
   conference={
      title={Combinatorial scientific computing},
   },
   book={
      series={Chapman \& Hall/CRC Comput. Sci. Ser.},
      publisher={CRC Press, Boca Raton, FL},
   },
   date={2012},
   pages={495--524},
   review={\MR{2952760}},
   doi={10.1201/b11644-19},
}

\bib{spielman07:_spect}{article}{
   author={Spielman, Daniel A.},
   author={Teng, Shang-Hua},
   title={Spectral partitioning works: planar graphs and finite element
   meshes},
   journal={Linear Algebra Appl.},
   volume={421},
   date={2007},
   number={2-3},
   pages={284--305},
   issn={0024-3795},
   review={\MR{2294342}},
   doi={10.1016/j.laa.2006.07.020},
}

\bib{tanner84:_explic_n}{article}{
   author={Tanner, R. Michael},
   title={Explicit concentrators from generalized $N$-gons},
   journal={SIAM J. Algebraic Discrete Methods},
   volume={5},
   date={1984},
   number={3},
   pages={287--293},
   issn={0196-5212},
   review={\MR{752035}},
   doi={10.1137/0605030},
}

\bib{thomason87}{incollection}{
   author={Thomason, Andrew},
   title={Pseudorandom graphs},
   conference={
      title={Random graphs '85},
      address={Pozna\'n},
      date={1985},
   },
   book={
      series={North-Holland Math. Stud.},
      volume={144},
      publisher={North-Holland, Amsterdam},
   },
   date={1987},
   pages={307--331},
   review={\MR{930498}},
}

\bib{thomason87:_random}{incollection}{
   author={Thomason, Andrew},
   title={Random graphs, strongly regular graphs and pseudorandom graphs},
   conference={
      title={Surveys in combinatorics 1987},
      address={New Cross},
      date={1987},
   },
   book={
      series={London Math. Soc. Lecture Note Ser.},
      volume={123},
      publisher={Cambridge Univ. Press, Cambridge},
   },
   date={1987},
   pages={173--195},
   review={\MR{905280}},
}

\end{biblist}
\end{bibdiv}
 
\end{document}